\theoremstyle{definition}
\newtheorem{defi}{Definition}[section]
\newtheorem{theorem}{Theorem}
\newtheorem{lemma}[defi]{Lemma}
\newtheorem{prop}[defi]{Proposition}
\newtheorem{claim}[defi]{Claim}
\newcommand{\mi}{\mathrm{i}}
\newcommand{\mj}{\mathrm{j}}
\newcommand{\mk}{\mathrm{k}}
\newcommand{\be}{{\bm e}}
\newcommand{\tri}{\mathrm{tri}}
\DeclareMathOperator{\id}{id}
\DeclareMathOperator{\Hom}{Hom}
\title{Characterizing the Universal Rigidity of Generic Tensegrities}
\author{Ryoshun Oba\thanks{Department of Mathematical Informatics, Graduate School of Information Science and Technology, University of Tokyo, Tokyo 113-8656, Japan. Email: \texttt{ryoshun\_oba@mist.i.u-tokyo.ac.jp}}
 \and Shin-ichi Tanigawa\thanks{Department of Mathematical Informatics, Graduate School of Information Science and Technology, University of Tokyo, Tokyo 113-8656, Japan. Email: \texttt{tanigawa@mist.i.u-tokyo.ac.jp}}}
\begin{document}
\maketitle
\begin{abstract}
    A tensegrity is a structure made from cables, struts and stiff bars. 
    A $d$-dimensional tensegirty is universally rigid  if it is rigid in any dimension $d'$ with $d'\geq d$.
    The celebrated super stability condition due to Connelly gives a sufficient condition for a tensegrity to be universally rigid.
    Gortler and Thurston showed that super stability characterizes universal rigidity when the point configuration is generic and every member is a stiff bar.
    We extend this result in two directions.
    We first show that a generic universally rigid tensegrity is super stable.
    We then extend it to tensegrities with point group symmetry, and show that this characterization still holds  as long as a tensegrity is  generic modulo symmetry.
    Our strategy is based on the block-diagonalization technique for symmetric semidefinite programming problems, and our proof relies on the theory of real irreducible representation of finite groups.
\end{abstract}
\section{Introduction}
A {\em tensegrity} is a stable structure made from cables, struts, and stiff bars. 
Since the invention by Kenneth Snelson, the theory of tensegrities and applications have been extensively studied from various perspectives. 
A mathematical foundation for the rigidity or stability analysis has been established in the context of rigidity theory~\cite{connelly1982rigidity,CW95,RW81}.
Following a notation in that context, we define a {\em ($d$-dimensional)  tensegrity}  as a triple $(G, \sigma, p)$ of an {\em edge-signed graph} $(G,\sigma)$ with $\sigma:E(G)\rightarrow \{-1,0,+1\}$ and a {\em point-configuration} $p:V(G)\rightarrow \mathbb{R}^d$.
Here each vertex $i$ corresponds to a joint $p_i=p(i)\in \mathbb{R}^d$, 
each edge $e=ij$ with $\sigma(e)=+1/0/-1$ corresponds to a cable/bar/strut, respectively, between joints $p_i$ and  $p_j$.
When every member is a stiff bar (that is, $\sigma(e)=0$ for every $e\in E(G)$), a tensegrity is called a {\em bar-joint framework},
which is the central object of study in rigidity theory. 

In a tensegrity all bars are stiff and cannot change the length while cables can be shorter and struts can be longer.
Under the system of these geometric constraints, the global rigidity of the tensegrity is defined in terms of the uniqueness of the solution of the system  up to isometries. More formally, given an edge-signed graph $(G,\sigma)$,  two point-configurations $p, q$ for $(G,\sigma)$ are said to be {\em  congruent} if 
\[
\|p_i -p_j\|=\|q_i -q_j\| \text{ for all } i, j \in V(G),
\]  
where $\|\cdot\|$ denotes the Euclidean norm, and a tensegrity  $(G,\sigma,p)$ is {\em congruent} to a tensegrity $(G,\sigma,q)$ if $p$ is congruent to $q$. 
We say that  a tensegrity  $(G,\sigma,p)$ {\em dominates} a tensegrity $(G,\sigma,q)$ 
if 
\begin{align*}
\|p_i -p_j\|&\geq \|q_i -q_j\| \text{ for all $e=ij \in E(G)$ with $\sigma(e)=+1$}, \\
\|p_i -p_j\|&= \|q_i -q_j\| \text{ for all $e=ij \in E(G)$ with $\sigma(e)=0$, and} \\ 
\|p_i -p_j\|&\leq \|q_i -q_j\| \text{ for all $e=ij \in E(G)$ with $\sigma(e)=-1$}.
\end{align*} 
This dominance captures the set of possible deformations of a given tensegrity $(G,\sigma, p)$, 
where a tensegrity $(G,\sigma,q)$ satisfies the geometric constraints posed by cables/bars/struts of $(G,\sigma, p)$ if and only if $(G,\sigma,q)$ is dominated by $(G,\sigma,p)$.
A $d$-dimensional tensegrity $(G,\sigma,p)$ is {\em globally rigid} if every $d$-dimensional tensegrity $(G,\sigma,q)$ dominated by $(G,\sigma,p)$ is congruent to $(G,\sigma,p)$.

Connelly~\cite{connelly1982rigidity} initiated the rigidity analysis of tensegrities, and in his paper \cite{connelly1982rigidity} in 1982 he gave a celebrated sufficient condition for the global rigidity in terms of stress matrices (that is, graph Laplacian matrices weighted by equilibrium self-stresses).
Tensegrities satisfying his sufficient condition are called {\em super stable}, and super stability is now used as a major criteria for structural engineers to develop new tensegrities (see, e.g., \cite{ZO15}).

Recently Connelly's super stability condition got an attention in the context of the sensor network localization or the graph realization problem~\cite{alfakih2007dimensional,A14,SY07}. 
To understand the exact solvability of the SDP relaxation, Ye and So~\cite{SY07} looked at a stronger rigidity property, called universal rigidity. Suppose that $(G,\sigma,p)$ is a $d$-dimensional tensegrity whose ambient space $\mathbb{R}^d$ lies in $\mathbb{R}^{d'}$ for each integer $d'\geq d$.
Then $(G,\sigma,p)$ is also a tensegrity in $\mathbb{R}^{d'}$. 
We say that $(G,\sigma,p)$ is {\em universally rigid} if $(G,\sigma,p)$ is globally rigid in $\mathbb{R}^{d'}$ for every integer $d'\geq d$.
Clearly, universal rigidity implies global rigidity but the converse implication does not hold in general as indicated in Figure~\ref{fig:1} (See, e.g.,~\cite{C20} for further interaction between two rigidity concepts.)
\begin{figure}
    \centering
    \scalebox{1.5}{
    \begin{tikzpicture}[every node/.style={draw,fill=black,circle,inner sep=1pt,minimum size=2pt}]
        \node[] (1) {};
        \node[below right=0.5cm and 0.2cm of 1] (2) {};
        \node[below left=0.7cm and 0.4cm of 2] (3) {};
        \node[right =1.5cm of 3] (4) {};
        \node[above left=0.7cm and 0.4cm of 4] (5) {};
        \node[above right=0.5cm and 0.2cm of 5] (6) {};
        \foreach \u / \v in {1/2,2/3,3/4,4/5,5/6,6/1,1/3,2/4,3/5,4/6}
        \draw (\u) -- (\v);
    \end{tikzpicture}
    
    \qquad
\begin{tikzpicture}[every node/.style={draw,fill=black,circle,inner sep=1pt,minimum size=2pt}]
    \node[] (1) {};
    \node[below left=0.5cm and 0.7cm of 1] (2) {};
    \node[below right=0.7cm and 0.5cm of 2] (3) {};
    \node[right =1.5cm of 3] (4) {};
    \node[above right=0.7cm and 0.5cm of 4] (5) {};
    \node[above left=0.5cm and 0.7cm of 5] (6) {};
    \foreach \u / \v in {1/2,2/3,3/4,4/5,5/6,6/1,1/3,2/4,3/5,4/6}
    \draw (\u) -- (\v);
\end{tikzpicture}}
    \caption{A framework on the left is globally rigid in $\mathbb{R}^2$, but it is not universally rigid. A framework on the right is universally rigid since it is a Cauchy polygon. These examples show that universal rigidity is not a generic property of a graph.}
    \label{fig:1}
\end{figure}

Although universal rigidity is stronger than global rigidity, super stability still implies universal rigidity as it is implicit in Connelly's original work~\cite{connelly1982rigidity}. It turns out  that super stability even characterizes universal rigidity for almost all bar-joint frameworks.
Specifically we say that a tensegrity (or a bar-joint framework) is {\em generic} if the set of coordinates of the points is algebraically independent over $\mathbb{Q}$. 
In 2014, Gortler and Thurston~\cite{GT} proved that a generic bar-joint framework $(G,p)$ is universally rigid if and only if it is super stable.

The goal of this paper is to extend the Gortler-Thurston characterization in two directions. 
We first extend the result to tensegrities, and show that universal rigidity and super stability coincide for generic tensegrities.
We then extend it to tensegrities with point group symmetry, 
where  a finite point group faithfully acts on the underlying signed graphs and the point-configurations are compatible with this action (see Section~\ref{sec:4} for the formal definition.) 
Note that a priori a tensegrity with point group symmetry is not generic, 
but we shall prove that a characterization still holds as long as  tensegrities are ``generic modulo symmetry''.
(Such a research direction is widely investigated for infinitesimal rigidity, see, e.g.~\cite{MT14,BW17}, and references therein.) 

As given in tensegrity catalogues, most of existing tensegrities exhibit symmetry or are compositions of simple symmetric modules,
and building larger tensegrities based on group symmetry is now a standard technique in structural engineering. The technique was initiated by Connelly and Terrell~\cite{CT95}, where they showed how to simplify the super stability condition via finite group representation theory.
Although their paper focuses on particular instances, the technique is general enough to design a larger class of  symmetric tensegrities~\cite{CB98}.
An implication of our result is that any  universally rigid tensegrities whose point-configurations are generic modulo symmetry can be obtained 
from stress matrices constructed as in the method of Connelly and Terrell.

We should remark that, as shown by Connelly and Gortler~\cite{connelly2015iterative},  the universal rigidity of tensegrities can be characterized by a sequence of dual solutions in the facial reduction procedure due to Borwein and Wolkowicz~\cite{B81}. 
We however believe that the characterization in terms of stress matrices (or  weighted Laplacian) is important toward characterizing the global rigidity of symmetric tensegrities. A characterization of the global rigidity of generic bar-joint frameworks is known in terms of stress matrices~\cite{gortlerhealythurston}. 

Technically our work is closely related to the topic of {\em strict complementarity} in semidefinite programming (SDP) problems, or equivalently to the face exposedness of projections of positive semidefinite cones. Understanding the existence of strict complementary pair of primal and dual solutions is a classical  but still on-going research topic in convex optimization (see, e.g., \cite{dCT1} for a recent result). 
The characterization problem of the universal rigidity  of bar-joint frameworks is known to be equivalent to the existence of strict complementary pairs of primal and dual solutions in the Euclidean matrix completion problem (see Section~\ref{sec:2} for details). 
There are several researchers that answer the characterization problem (or  the existence of strict complementary pair) for special classes of graphs~\cite{ATY13,DPW15,T17} while Gortler-Thurston~\cite{GT} solved the  problem assuming a certain genericity of input entries.  This paper provides  a new  direction based on group symmetry to go beyond generic instances.

Our proof strategy is based on the block-diagonalization technique for symmetric SDP problems. 
Here the general idea is to use the  block-diagonalization of the underlying matrix algebra to decompose SDP instances to smaller pieces,
and the method is successfully used to solve large scaled SDP problems, see, e.g.,~\cite{bachoc2012invariant,gatermann2004symmetry,murota2010numerical}. 
Also, prior researches~\cite{B09,K01} on this technique are motivated from the optimal design of truss structures.
Our technical contribution is to use the block-diagonalization technique to analyze the facial structures of SDP problems rather than for reducing computational cost, and our proof essentially relies on  the theory of {\em real} irreducible representation.


\section{Semidefinite Programming Problem for Universal Rigidity}\label{sec:2}
In this section we shall explain the background materials for analyzing universal rigidity from the view point of semidefinite programming.

Throughout the paper we shall use the following notations.
Let $V$ be a finite set with $|V|=n$ (typically $V=\{1,2,\dots, n\}$).
For a finite set $X$ with $|X|=m$, let $\mathbb{R}^X$ be the $m$-dimensional Euclidean space whose each entry is indexed by each element of $X$.
For $i\in X$, let $\bm{e}_i$ be the unit vector of $\mathbb{R}^X$ whose $i$-th entry is one and all other entries are zero,
and let ${\bm 1}_X=\sum_{i\in X} \bm{e}_i$.
Similarly, let $\mathcal{S}^X$ be the set of all $m \times m$ symmetric matrices whose entries are indexed by the pairs of elements in $X$. Throughout the paper, $\mathcal{S}^X$ is regarded as a Euclidean space by using the trace inner product $\langle \cdot , \cdot \rangle$ defined by $\langle A,B \rangle=\text{tr}AB$.
If $A \in \mathcal{S}^X$ is positive semidefinite, it is denoted as $A \succeq 0$,
and let $\mathcal{S}^X_+=\{ A \in \mathcal{S}^X: A\succeq 0\}$.
%

For a graph $G$, let $N_G(i)$ be the set of all neighbors of $i\in V(G)$ in $G$, and let $\overline{N}_G(i)=N_G(i)\cup \{i\}$.

\subsection{Weighted Laplacian and Configurations}
For the SDP formulation we shall first define the space of Laplacian matrices. 

Given a graph $G=(V,E)$ with edge weight $\omega:E(G)\rightarrow \mathbb{R}$, its {\em Laplacian} $L_{G,\omega}$ is 
defined by 
\[
L_{G,\omega}:=\sum_{e=ij\in E} \omega_{ij} F_{ij},
\]
where $\omega_{ij}=\omega(ij)$ and
\[
F_{ij}:=(\bm{e}_i-\bm{e}_j)(\bm{e}_i-\bm{e}_j)^\top.
\]
It is symmetric and always satisfies $L_{G,\omega}{\bm 1}_V=\bm{0}$.
A weighted Laplacian of the complete graph on $V$ is simply called a {\em Laplacian matrix}. (Equivalently, a symmetric matrix $L$ is Laplacian if $L{\bm 1}_V=0$.)
Let $\mathcal{L}^V$ be the set of all Laplacian matrices. Then $\mathcal{L}^V$ is a linear subspace of $\mathcal{S}^V$ given by
\[
\mathcal{L}^V={\rm span}\{F_{ij}: i,j\in V, i\neq j\},
\]
where $\{F_{ij}: i,j\in V, i\neq j\}$ forms a basis.

Let $J_V={\bm 1}_V {\bm 1}_V^{\top}$.
When $L\succeq 0$, $L\in \mathcal{L}^V$ if and only if  $\langle L, J_V\rangle=0$.
Hence the set of positive semidefinite Laplacian matrices  $\mathcal{L}_+^V$ is given by 
\[
\mathcal{L}_+^V=\{ L \in \mathcal{S}_+^V : \langle L,J_V \rangle=0 \}. 
\]

Let $q:V\rightarrow \mathbb{R}^d$ be a $d$-dimensional point configuration for some positive integer $d$. 
We identify $q$ with a matrix $Q$ of size $d\times n$ whose $i$th column vector is $q_i$. 
We then have $Q^{\top} Q\succeq 0$, 
and $\langle Q^\top Q, J_V \rangle = 0$ holds if and only if the center of gravity of $q(V)$ is the origin, i.e., $\sum_{i\in V} q_i=\bm{0}$.
$Q^\top Q$ is called the {\em Gram matrix} of $q$.
Since the properties we are interested in (such as universal rigidity) are invariant by translations, 
throughout the paper we shall focus on tensegrities whose center of gravity is at the origin.

We denote by $\mathcal{C}_d(V)$ the set of all point configurations $q:V\rightarrow \mathbb{R}^d$ such that $\sum_{i\in V} q_i=\bm{0}$ and $q(V)$ affinely span $\mathbb{R}^d$,
and let $\mathcal{C}(V)=\bigcup_{d\in \mathbb{Z}_{\geq 0}} \mathcal{C}_d(V)$. 
Then we have  
\begin{equation}\label{eq:conf2}
\{L\in \mathcal{L}^V_+: \rank L=d\}=\{Q^{\top} Q: q\in \mathcal{C}_d(V)\}
\end{equation}
and 
\begin{equation}\label{eq:conf}
\mathcal{L}^V_+=\{Q^{\top} Q: q\in \mathcal{C}(V)\}.
\end{equation}

\subsection{SDP Formulation}
Let $(G,\sigma, p)$ be a $d$-dimensional tensegrity.  
Let $E_0=\sigma^{-1}(0)$, $E_+=\sigma^{-1}(+1)$, $E_-=\sigma^{-1}(-1)$.
We consider the following semidefinite programming problem for $(G,\sigma,p)$:
    \[
\begin{array}{llll}
        \text{(P)} &  \text{max.}  & 0 \\
         & \text{s.t.}  & \langle X, F_{ij} \rangle   =  \| p_i-p_j\|^2 &(ij \in E_0) \\
         &              & \langle X, F_{ij} \rangle \leq \| p_i-p_j\|^2 &(ij \in E_+) \\
         &              & \langle X, F_{ij} \rangle \geq \| p_i-p_j\|^2 &(ij \in E_-) \\
         &              & X \in \mathcal{L}^V_+.
    \end{array}
\]
By (\ref{eq:conf}) any feasible $X$ is written as $X=Q^{\top}Q$ for some $q\in \mathcal{C}(V)$.     
Moreover, 
\[
\langle Q^\top Q, F_{ij}\rangle=\langle Q^\top Q, (\bm{e}_i-\bm{e}_j)(\bm{e}_i-\bm{e}_j)^\top \rangle = \|q_i-q_j\|^2
\]
holds, which means that $Q^\top Q$ is feasible if and only if 
    $(G,\sigma,q)$ is dominated by $(G,\sigma,p)$.
It can be also checked that $Q^\top Q=P^{\top} P$ holds if and only if $p$ and $q$ are congruent.
    Therefore, we have the following.
    \begin{prop} \label{prop:uniqueness}
        $(G,\sigma,p)$ is universally rigid if and only if (P) has a unique feasible solution.
    \end{prop}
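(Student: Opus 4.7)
The plan is to unpack the SDP (P) via the parametrization in (\ref{eq:conf}), which tells us that every feasible $X$ has the form $X=Q^\top Q$ for some $q\in \mathcal{C}(V)$, and conversely. Because $\langle Q^\top Q, F_{ij}\rangle = \|q_i-q_j\|^2$, the inequality/equality constraints of (P) translate exactly into dominance of $(G,\sigma,q)$ by $(G,\sigma,p)$. So the feasible set of (P) corresponds, via the map $q\mapsto Q^\top Q$, to the set of centered point configurations $q$ (of arbitrary dimension) whose associated tensegrity is dominated by $(G,\sigma,p)$, and the question becomes whether $X=P^\top P$ is forced.

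The one algebraic fact I would isolate up front is that the linear map $\phi:\mathcal{L}^V\to \mathbb{R}^{\binom{V}{2}}$ defined by $\phi(X)_{ij}=\langle X,F_{ij}\rangle$ is injective. Both spaces have dimension $\binom{n}{2}$, so it suffices to check that $\phi(X)=0$ forces $X=0$; expanding $\langle X,F_{ij}\rangle = X_{ii}+X_{jj}-2X_{ij}$ and combining with the row-sum constraint $X\bm{1}_V=\bm{0}$ that defines $\mathcal{L}^V$ yields this by a short direct calculation. Conceptually, this says that a centered Gram matrix is uniquely determined by its pairwise squared distances.

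For the forward direction, suppose $(G,\sigma,p)$ is universally rigid and let $X$ be feasible for (P); write $X=Q^\top Q$ with $q\in \mathcal{C}_{d'}(V)$ for some $d'$. Then $(G,\sigma,q)$ is dominated by $(G,\sigma,p)$, and after embedding both configurations into $\mathbb{R}^{\max(d,d')}$, universal rigidity gives $\|p_i-p_j\|=\|q_i-q_j\|$ for all $i,j\in V$; injectivity of $\phi$ then forces $X=P^\top P$. For the converse, assume (P) has a unique feasible solution and take any $q:V\to \mathbb{R}^{d'}$ with $d'\geq d$ such that $(G,\sigma,q)$ is dominated by $(G,\sigma,p)$. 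Translating $q$ so that $\sum_{i\in V} q_i=\bm{0}$ and restricting the ambient space to the affine span of $q(V)$ places $q$ in $\mathcal{C}(V)$ without altering dominance (which depends only on the intrinsic pairwise distances), so $Q^\top Q$ is feasible for (P); uniqueness forces $Q^\top Q=P^\top P$, which immediately yields $\|p_i-p_j\|=\|q_i-q_j\|$ for all $i,j$, i.e., $p$ and $q$ are congruent. I do not foresee a significant obstacle: the statement is essentially a dictionary between two viewpoints, with the only genuine content being the short injectivity check for $\phi$ together with the care needed to allow $d'$ on either side of $d$ in the forward direction and to center and restrict to the affine span in the converse.
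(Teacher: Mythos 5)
Your proposal is correct and follows essentially the same route as the paper: parametrize feasible $X$ as centered Gram matrices $Q^\top Q$ via~(\ref{eq:conf}), translate the constraints of (P) into dominance of $(G,\sigma,q)$ by $(G,\sigma,p)$, and observe that $Q^\top Q = P^\top P$ precisely when $p$ and $q$ are congruent. Your injectivity lemma for $\phi$ (together with the centering and affine-span reduction, and the $\max(d,d')$ bookkeeping) is simply a careful spelling-out of the paper's unproved ``it can be also checked'' step.
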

Since $\mathcal{L}^V_+\subset \mathcal{L}^V$ and $F_{ij}\in \mathcal{L}^V$, we can consider the dual problem of (P) in $\mathcal{L}^V$, that is, 
\[
    \begin{array}{llll}
        \text{(D)} & \text{min.}  & \sum_{ij \in E(G)} \omega_{ij}\|p_i-p_j\|^2 \\
         & \text{s.t.}  & \sum_{ij \in E(G)} \omega_{ij} F_{ij} \succeq 0 \\
         &   & \sigma(ij)\omega_{ij} \geq 0 & (ij\in E(G)).
    \end{array}
\]
By weak duality, the dual optimal value is at least $0$, and it is indeed $0$ as it is attained by $\omega=0$.

If we consider a dual variable $\omega:E(G)\rightarrow \mathbb{R}$ as an edge weight of $G$, 
the first dual constraint is written by $L_{G,\omega}\succeq 0$. 
Moreover, the objective function is equal to $\langle P^\top P,L_{G,\omega} \rangle$. 
Hence $L_{G,\omega}\succeq 0$ implies that $\omega$ is dual optimal if and only if $PL_{G,\omega}=O$.
In terms of $p$, the latter condition becomes
\begin{equation}\label{eq:equi}
\sum_{j \in N_G(i)} \omega_{ij}(p_i-p_j)=\bm{0} \qquad (i\in V(G)).
\end{equation}

The equation (\ref{eq:equi}) is nothing but the {\em equilibrium condition} for structures to be statically rigid, and the equation frequently appears in rigidity theory. In general, for a tensegrity $(G,\sigma, p)$, an edge weight $\omega:E(G)\rightarrow \mathbb{R}$ is said to be an {\em equilibrium stress} if $\omega$ satisfies (\ref{eq:equi}).
Also $\omega$ is said to be {\em proper} if 
\begin{equation}\label{eq:proper}
\sigma(ij)\omega_{ij}\geq 0\qquad (ij\in E(G)).
\end{equation}
We further say that $\omega$ is {\em strictly proper} if (\ref{eq:proper}) holds with strict inequality for every $ij \in E_+ \cup E_-$.
The condition (\ref{eq:proper}) reflects a physical fact that each cable only has a tension while each strut only has a compression (see~\cite{RW81} for more details).  

With this notation, the discussion is summarized as follows.
    \begin{prop} \label{prop dual and ESM}
     An edge weight $\omega:E(G)\rightarrow \mathbb{R}$ is an optimal solution of (D) if and only if it is a proper equilibrium stress of $(G,\sigma,p)$. 
    \end{prop}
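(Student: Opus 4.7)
The plan is to peel off what ``optimal in (D)'' means and match each piece to the definition of a proper equilibrium stress. Since the feasible point $\omega=0$ achieves the dual objective $0$, weak duality pins the dual optimum at $0$, so $\omega$ is optimal iff (i) it is dual-feasible and (ii) its objective value is $0$. Dual feasibility immediately unpacks to $L_{G,\omega}\succeq 0$ together with the sign condition $\sigma(ij)\omega_{ij}\geq 0$ for every edge, which is exactly the properness condition (\ref{eq:proper}).

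Next I would compute the objective. Using $\|p_i-p_j\|^2=\langle P^\top P, F_{ij}\rangle$ and the definition of $L_{G,\omega}$, the dual objective equals $\langle P^\top P, L_{G,\omega}\rangle$. The decisive step is the standard PSD orthogonality fact: for $A,B\in\mathcal{S}^V_+$, one has $\langle A,B\rangle=0$ iff $AB=0$. This is a one-line computation — write $B=MM^\top$ and observe $\langle A,MM^\top\rangle=\mathrm{tr}(M^\top A M)=\|A^{1/2}M\|_F^2$. Applying it to $A=P^\top P$ and $B=L_{G,\omega}$, the objective vanishes iff $P^\top P\, L_{G,\omega}=O$. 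Since $\ker(P^\top P)=\ker P$ (because $P^\top Pv=0$ iff $\|Pv\|^2=0$), this is in turn equivalent to $P L_{G,\omega}=O$.

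Reading $PL_{G,\omega}=O$ column-by-column recovers, for each $i\in V(G)$, the equation $\sum_{j\in N_G(i)}\omega_{ij}(p_i-p_j)=\mathbf{0}$, i.e.\ the equilibrium condition (\ref{eq:equi}). Putting everything together, $\omega$ is optimal in (D) iff $L_{G,\omega}\succeq 0$, $\omega$ is proper, and $\omega$ is an equilibrium stress of $(G,\sigma,p)$, which is precisely the statement that $\omega$ is a proper equilibrium stress (the PSD condition on $L_{G,\omega}$ being implicitly carried along whenever we speak of a dual-feasible stress).

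There is no real obstacle here; every step is a direct translation except the PSD orthogonality fact, which is short and well-known. The proof is essentially the standard semidefinite complementary slackness specialized to the fact that (D) has optimal value $0$, and all the ingredients are already assembled in the paragraph preceding the proposition.
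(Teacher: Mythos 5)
Your proof is correct and follows essentially the same route the paper takes in the paragraph preceding the proposition: pin the dual optimal value at $0$ via weak duality and the trivial solution, rewrite the objective as $\langle P^\top P, L_{G,\omega}\rangle$, and then pass from $\langle P^\top P, L_{G,\omega}\rangle=0$ to $PL_{G,\omega}=O$ to the equilibrium equation~(\ref{eq:equi}). You simply make explicit the PSD-orthogonality step (``$\langle A,B\rangle=0\Leftrightarrow AB=0$ for $A,B\succeq0$'' together with $\ker(P^\top P)=\ker P$) which the paper compresses into the one line ``$L_{G,\omega}\succeq 0$ implies that $\omega$ is dual optimal if and only if $PL_{G,\omega}=O$.'' Your closing parenthetical also correctly flags the one mild imprecision: as literally defined, ``proper equilibrium stress'' does not include $L_{G,\omega}\succeq 0$, so the backward implication really needs the PSD condition bundled in; that caveat is inherited verbatim from the paper's own informal treatment and you were right to note it.
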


\subsection{Facial Structure of $\mathcal{L}_+^V$}
In the next two subsections, we shall provide high level ideas of Connelly's sufficient condition and Gortler-Thurston's characterization 
since our  technical result will be built on these ideas.
The key ingredient in both results are the facial structure of $\mathcal{L}_+^V$.

Let  $C$ be a non-empty convex set in a Euclidean space.
 The dimension of $C$ is the {\em  dimension} of the smallest affine subspace containing $C$ and is denoted as $\dim C$.
 A convex subset $F \subseteq C$ is a {\em  face} if for any $x,y \in C$, $\frac{x+y}{2} \in F$ implies $x,y \in F$.
 For $x \in C$, the smallest face containing $x$ is called the  {\em minimal face} of $x$ and is denoted as $F_C(x)$.
 We say that a hyperplane $H$ {\em exposes} a face $F$ of $C$ if $F=C\cap H$ and $H$ supports $C$ (i.e., $C$ lies on the closed halfspace defined by $H$). A face $F$ is said to be {\em exposed} if there is a hyperplane exposing $F$. 
 To simplify the presentation, we also consider the ambient space as a hyperplane whose normal vector is zero vector.
 Then $C$ itself is always exposed.
 $C$ is called {\em exposed} if every face of $C$ is exposed.
 It is well-known that ${\cal S}_+^V$ (in ${\cal S}^V$) is exposed, 
 but this is not a general property of convex sets.
 Moreover the following properties are known for the facial structure of ${\cal S}^n_+$ (see, e.g., \cite{pataki2000geometry}).
\begin{prop} \label{prop:psdface}
    Let $A \in {\cal S}^n_+$ be a matrix with rank $d$. Then $\dim F_{{\cal S}^n_+} (A) = \binom{d+1}{2}$.

    For $B \in {\cal S}^n$, the hyperplane $\{X \in {\cal S}^n : \langle X,B \rangle=0\}$ exposes $F_{{\cal S}^n_+} (A)$ if and only if $B$ satisfies $\rank A + \rank B =n$, $\langle A,B \rangle=0$, and $B \succeq 0$.
\end{prop}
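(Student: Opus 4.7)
The plan is to reduce to a canonical diagonal form for $A$ and then exploit the self-duality of $\mathcal{S}^n_+$ to read off both assertions. Since conjugation $X \mapsto U X U^\top$ by an orthogonal matrix $U$ is a linear automorphism of $\mathcal{S}^n$ preserving $\mathcal{S}^n_+$, it carries minimal faces to minimal faces and exposing hyperplanes to exposing hyperplanes. So I would first orthogonally diagonalize $A$ and assume without loss of generality that $A = \mathrm{diag}(A', 0)$ with $A' \in \mathcal{S}^d$ positive definite.

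For the dimension statement, I would identify the candidate
\[
\mathcal{F} := \{\mathrm{diag}(X', 0) : X' \in \mathcal{S}^d_+\}
\]
and verify it equals $F_{\mathcal{S}^n_+}(A)$. That $\mathcal{F}$ is a face follows from the observation that if $\tfrac12(Y+Z) \in \mathcal{F}$ with $Y, Z \succeq 0$, then all bottom-right diagonal entries of $Y$ and $Z$ must vanish, which by positive semidefiniteness forces the entire bottom rows and columns of $Y$ and $Z$ to vanish; hence $Y, Z \in \mathcal{F}$. Minimality follows because $A$ lies in the relative interior of $\mathcal{F}$ (the top-left block $A'$ is invertible). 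Thus $\dim F_{\mathcal{S}^n_+}(A) = \dim \mathcal{S}^d = \binom{d+1}{2}$.

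For the exposing-hyperplane statement, I would first observe that $H_B := \{X : \langle X, B \rangle = 0\}$ supports $\mathcal{S}^n_+$ iff $\langle X, B \rangle \geq 0$ for all $X \succeq 0$, equivalently (after possibly flipping signs) $B \succeq 0$, as one sees by testing on rank-one matrices $vv^\top$. For $H_B$ to contain $A$ one needs $\langle A, B \rangle = \mathrm{tr}(AB) = 0$; combined with $A, B \succeq 0$ this forces $AB = 0$ (write $A = C^\top C$, so $\|C B^{1/2}\|_F^2 = 0$). Under the canonical form this means $B = \mathrm{diag}(0, B'')$ with $B'' \succeq 0$, and $H_B \cap \mathcal{S}^n_+$ then consists of matrices $\mathrm{diag}(X', X'')$ with $X', X'' \succeq 0$ and $\langle X'', B''\rangle = 0$. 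This set coincides with $\mathcal{F}$ iff $X'' = 0$ is forced, which by the same trace argument happens iff $B'' \succ 0$, i.e., iff $\rank B = n - d$.

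The only non-mechanical ingredient, and therefore the main point to nail down, is the implication $\mathrm{tr}(AB) = 0 \Rightarrow AB = 0$ for $A, B \succeq 0$; this is what drives both the block description of $F_{\mathcal{S}^n_+}(A)$ and the characterization of its exposing hyperplanes. After that the proof reduces to straightforward block-diagonal bookkeeping.
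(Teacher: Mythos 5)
The paper does not actually give a proof of Proposition~\ref{prop:psdface}; it cites the result as standard (referring to Pataki's survey). So there is no ``paper's proof'' to compare against; the question is simply whether your argument is correct.

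Your argument is correct, and it is the standard one: reduce by an orthogonal change of basis to $A=\mathrm{diag}(A',0)$ with $A'\succ 0$, identify $F_{\mathcal S^n_+}(A)$ as the block $\{\mathrm{diag}(X',0):X'\succeq 0\}$ by the relative-interior criterion, and then use self-duality of $\mathcal S^n_+$ plus the implication $\operatorname{tr}(AB)=0\Rightarrow AB=0$ for $A,B\succeq 0$ to characterize exposing hyperplanes. One small imprecision worth fixing: the set $H_B\cap\mathcal S^n_+$ does \emph{not} consist only of block-diagonal matrices (e.g.\ when $B''$ is singular, or $B=0$, it contains matrices with nonzero off-diagonal blocks). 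What is true is that for $X=\begin{pmatrix}X'&X_{12}\\ X_{12}^\top & X''\end{pmatrix}\in H_B\cap\mathcal S^n_+$ you get $\langle X'',B''\rangle=0$ with $X''\succeq 0$; if $B''\succ 0$ this forces $X''=0$, and then positive semidefiniteness forces $X_{12}=0$, so $H_B\cap\mathcal S^n_+=\mathcal F$. If instead $B''$ has a kernel vector $v$, then $\mathrm{diag}(0,vv^\top)$ lies in $H_B\cap\mathcal S^n_+\setminus\mathcal F$. With that phrasing tightened, the proof is complete and matches the canonical derivation of these facts.
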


Now we are interested in the facial structure of $\mathcal{L}_+^V$ in $\mathcal{L}^V$. 
$\mathcal{L}_+^V$ is known to be a face of $\mathcal{S}_+^V$, and one can understand the facial structure of $\mathcal{L}_+^V$ by restricting the ambient matrix space to $\mathcal{L}^V$. 
Based on Proposition~\ref{prop:psdface}, the following properties easily follow.
\begin{prop}\label{prop:realface}
Let $L\in \mathcal{L}_+^V$ be a matrix with rank $d$.
Then $\dim F_{\mathcal{L}_+^V}(L)={d+1 \choose 2}$.

For $M\in \mathcal{L}^V$,  the hyperplane  $\{X\in \mathcal{L}^V: \langle X, M\rangle=0\}$ exposes $F_{\mathcal{L}_+^V}(L)$ if and only if $M$ satisfies
$\rank L+\rank M=|V|-1$, $\langle L, M\rangle=0$, and $M\succeq 0$.
\end{prop}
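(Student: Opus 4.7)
The plan is to reduce both assertions to Proposition~\ref{prop:psdface} by exploiting the fact that $\mathcal{L}_+^V$ is itself a face of $\mathcal{S}_+^V$. Since $J_V = \bm{1}_V\bm{1}_V^\top \succeq 0$, we have $\langle X, J_V\rangle = \bm{1}_V^\top X\bm{1}_V \geq 0$ for every $X \in \mathcal{S}_+^V$, with equality exactly when $X\bm{1}_V = \bm{0}$; hence $\mathcal{L}_+^V$ is the face of $\mathcal{S}_+^V$ exposed by $J_V$, and every face of $\mathcal{L}_+^V$ is simultaneously a face of $\mathcal{S}_+^V$. In particular $F_{\mathcal{L}_+^V}(L) = F_{\mathcal{S}_+^V}(L) = \{X \in \mathcal{S}_+^V : \mathrm{image}(X) \subseteq \mathrm{image}(L)\}$, which by Proposition~\ref{prop:psdface} has dimension $\binom{d+1}{2}$, settling the first assertion.

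For the exposing characterization I would first establish that $\mathcal{L}_+^V$ is self-dual inside its ambient space $\mathcal{L}^V$: an element $M \in \mathcal{L}^V$ satisfies $\langle M, X\rangle \geq 0$ for all $X \in \mathcal{L}_+^V$ if and only if $M \succeq 0$. The backward direction is immediate, and for the forward direction, testing on $X_v = (v - \bar v\bm{1}_V)(v - \bar v\bm{1}_V)^\top \in \mathcal{L}_+^V$ (with $\bar v = n^{-1}\bm{1}_V^\top v$) yields $\langle M, X_v\rangle = v^\top M v$ (using $M\bm{1}_V = \bm{0}$), forcing $M \succeq 0$. Consequently any $M \in \mathcal{L}^V$ defining an exposing hyperplane for a proper face must satisfy $M \succeq 0$, and since $L$ lies on this hyperplane we automatically get $\langle L, M\rangle = 0$, equivalently $\mathrm{image}(L) \subseteq \ker M$.

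It then remains to pin down when $\{X \in \mathcal{L}_+^V : \langle M, X\rangle = 0\}$ coincides with $F_{\mathcal{L}_+^V}(L)$. For $X \in \mathcal{L}_+^V$ and $M \succeq 0$, $\langle M, X\rangle = 0$ is equivalent to $\mathrm{image}(X) \subseteq \ker M$, and combined with $X\bm{1}_V = \bm{0}$ this becomes $\mathrm{image}(X) \subseteq \ker M \cap \bm{1}_V^\perp$; meanwhile $X \in F_{\mathcal{L}_+^V}(L)$ iff $\mathrm{image}(X) \subseteq \mathrm{image}(L)$, and $\mathrm{image}(L) \subseteq \bm{1}_V^\perp$ automatically. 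Hence the two sets coincide iff $\ker M \cap \bm{1}_V^\perp = \mathrm{image}(L)$. The inclusion $\mathrm{image}(L) \subseteq \ker M \cap \bm{1}_V^\perp$ is exactly $\langle L, M\rangle = 0$; using $\bm{1}_V \in \ker M$ (as $M \in \mathcal{L}^V$), we have $\dim(\ker M \cap \bm{1}_V^\perp) = n - 1 - \rank M$, so the reverse inclusion reduces to the rank equality $\rank L + \rank M = n - 1$. The only subtle point, and the one place where care is required, is the shift from $n$ (as in Proposition~\ref{prop:psdface}) to $n - 1$ here, caused by the fact that $\bm{1}_V$ lies in the kernel of every matrix in $\mathcal{L}^V$; no genuine obstacle arises, as the whole argument is bookkeeping around the passage from $\mathcal{S}_+^V$ to its face $\mathcal{L}_+^V$.
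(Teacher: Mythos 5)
Your proof is correct, and it fleshes out the route the paper only sketches with the remark ``Based on Proposition~\ref{prop:psdface}, the following properties easily follow.'' The three observations you lean on are exactly the right ones: $\mathcal{L}_+^V$ is the face of $\mathcal{S}_+^V$ exposed by $J_V$, so $F_{\mathcal{L}_+^V}(L)=F_{\mathcal{S}_+^V}(L)$ and the dimension claim is free; $\mathcal{L}_+^V$ is self-dual inside $\mathcal{L}^V$, which pins the sign of $M$; and for $M\succeq 0$ the slice $\{X\in\mathcal{L}_+^V:\langle M,X\rangle=0\}$ and the face $F_{\mathcal{L}_+^V}(L)$ are both of the form $\{X\succeq 0:\mathrm{image}(X)\subseteq W\}$, so everything reduces to comparing the subspaces $W=\ker M\cap\bm{1}_V^\perp$ and $W=\mathrm{image}(L)$, with the $n-1$ arising cleanly because $\bm{1}_V\in\ker M\setminus\bm{1}_V^\perp$.

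The paper's phrase ``by restricting the ambient matrix space to $\mathcal{L}^V$'' hints at an equivalent but slightly slicker variant: fix $W\in\mathbb{R}^{n\times(n-1)}$ whose columns form an orthonormal basis of $\bm{1}_V^\perp$; then $X\mapsto W^\top X W$ is a linear isometry of $\mathcal{L}^V$ onto $\mathcal{S}^{n-1}$ that carries $\mathcal{L}_+^V$ onto $\mathcal{S}_+^{n-1}$ and preserves rank (since $WW^\top X=X=XWW^\top$ for $X\in\mathcal{L}^V$). Proposition~\ref{prop:psdface} applied in $\mathcal{S}^{n-1}$ then gives the whole statement verbatim, with the $n-1$ appearing automatically rather than through a kernel-dimension count. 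Your version stays in the original coordinates and is more explicit; the conjugation version is shorter. Both are sound. One small caveat that you inherit from the paper's own statement of Proposition~\ref{prop:psdface}: a hyperplane is unoriented, so $M$ and $-M$ define the same $H$; the ``only if'' clause should be read under the usual convention that the exposing normal is chosen nonnegative on the cone, as your self-duality step implicitly does.
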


\subsection{Connelly's Sufficient Condition}
The following is Connelly's super stability condition.
\begin{theorem}[Connelly~\cite{connelly1982rigidity}]\label{thm:connelly}
Let  $(G,\sigma,p)$ be  a $d$-dimensional tensegrity  with $n$ vertices, and suppose that 
\begin{itemize}
\item[(i)]  it has a strictly proper equilibrium stress $\omega$ such that 
$L_{G,\omega}\succeq 0$ and $\rank L_{G,\omega}=n-d-1$, and 
\item[(ii)]  there is no non-zero symmetric matrix $S$ of size $d\times d$ such that 
\[
(p_i-p_j)^{\top} S(p_i-p_j)=0\qquad (ij\in E(G)).
\]
\end{itemize}
Then $(G,\sigma,p)$ is universally rigid.
\end{theorem}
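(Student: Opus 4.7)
The plan is to prove the theorem by combining semidefinite duality with a kernel analysis of $L_{G,\omega}$. By Proposition~\ref{prop:uniqueness}, it suffices to show that $P^{\top}P$ is the only feasible point of~(P), so the strategy is to use $\omega$ as an optimal dual certificate, apply complementary slackness, and finally invoke hypothesis~(ii) to turn the resulting edge-length identities into an isometry.

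First I would verify that $\omega$ is optimal for~(D): properness yields dual feasibility, while the equilibrium condition~(\ref{eq:equi}) for $p$ gives $PL_{G,\omega}=O$ and hence $\langle P^{\top}P,L_{G,\omega}\rangle = 0$, matching the primal optimum~$0$. For any primal-feasible $X$ I would then compute
$$\langle X,L_{G,\omega}\rangle \;=\; \sum_{ij\in E(G)}\omega_{ij}\langle X,F_{ij}\rangle \;=\; \sum_{ij\in E(G)}\omega_{ij}\bigl(\langle X,F_{ij}\rangle - \|p_i-p_j\|^2\bigr),$$
where each summand is non-positive by properness of $\omega$ together with the primal constraints, while the left-hand side is non-negative because $X,L_{G,\omega}\succeq 0$. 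Hence every summand vanishes, and strict properness upgrades this to the edge-length equality $\langle X,F_{ij}\rangle = \|p_i-p_j\|^2$ for every $ij\in E(G)$.

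Second, $\langle X,L_{G,\omega}\rangle=0$ with $X,L_{G,\omega}\succeq 0$ forces $XL_{G,\omega}=O$, so the column space of $X$ is contained in $\ker L_{G,\omega}$. Hypothesis~(i) gives $\dim\ker L_{G,\omega}=d+1$; the equilibrium condition and $L_{G,\omega}{\bm 1}_V=\bm{0}$ place both the rows of $P$ and ${\bm 1}_V$ inside this kernel. Since the rows of $P$ are linearly independent (as $p$ affinely spans $\mathbb{R}^d$) and orthogonal to ${\bm 1}_V$ (as $p$ is centred), dimension count yields
$$\ker L_{G,\omega} \;=\; \operatorname{span}\{\text{rows of }P\} \;\oplus\; \mathbb{R}{\bm 1}_V.$$
Because $X\in\mathcal{L}^V_+$ also satisfies $X{\bm 1}_V=\bm{0}$, the column space of $X$ in fact lies in $\operatorname{span}\{\text{rows of }P\}$.

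Finally, writing $X=Q^{\top}Q$ with $q\in\mathcal{C}_{d'}(V)$ via~(\ref{eq:conf}), the inclusion above forces $Q=AP$ for some $d'\times d$ matrix $A$ with $d'\le d$. The edge-length equalities become $(p_i-p_j)^{\top}(A^{\top}A-I)(p_i-p_j)=0$ for every $ij\in E(G)$, and hypothesis~(ii) applied to the $d\times d$ symmetric matrix $S=A^{\top}A-I$ forces $A^{\top}A=I$. This is possible only if $d'=d$ and $A$ is orthogonal, so $q=Ap$ is congruent to $p$, as required. The step requiring the most care is the identification of $\ker L_{G,\omega}$ and the dimension bookkeeping that ensures hypothesis~(ii) is applied to a genuine $d\times d$ symmetric matrix, so that it really delivers an orthogonal transformation rather than a degenerate linear map.
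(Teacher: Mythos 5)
Your proof is correct and follows the standard SDP-duality argument for Connelly's theorem, which is the same route the paper itself has in mind: use $\omega$ as an optimal dual certificate, derive the all-edge length equalities from complementary slackness together with strict properness, identify $\ker L_{G,\omega}$ with $\operatorname{span}\{\text{rows of }P\}\oplus\mathbb{R}{\bm 1}_V$ to force $Q=AP$, and invoke the conic condition on $S=A^{\top}A-I_d$. The only cosmetic difference is that the paper's outlook phrases the kernel step via the face-exposure criterion of Proposition~\ref{prop:realface} (the hyperplane $\langle\cdot,L_{G,\omega}\rangle=0$ exposes $F_{\mathcal{L}_+^V}(P^{\top}P)$), whereas you carry out the equivalent kernel dimension count by hand; both implicitly use the translation-invariance of the hypotheses to center $p$ and the tacit assumption that $p$ affinely spans $\mathbb{R}^d$.
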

The condition (ii)  is referred to as the {\em conic condition for the edge directions}. 

\if0
Theorem~\ref{thm:connelly} can be understood as follows.
Consider SDP problem (P) for a $d$-dimensional tensegrity $(G,\sigma, p)$.
Then $P^{\top}P$ is primal feasible (and hence primal optimal).
On the other hand, as $\omega$ is proper and $L_{G,\omega}\succeq 0$, 
$\omega$ is dual feasible, and  by Proposition~\ref{dual and ESM} $\omega$ is dual optimal. 

By the complementarity slackness condition, for any primal solution $X$, 
$\langle X, L_{G,\omega}\rangle=0$ holds. 
On the other hand, as  $\rank P^{\top}P +\rank L_{G,\omega}=d+n-(d+1)= n-1$, 
Proposition~\ref{prop:realface} says that 
the hyperplane $\{X\in \mathcal{L}^V: \langle X, L_{G,\omega}\rangle=0\}$  exposes $F_{\mathcal{L}_+^V}(P^{\top}P)$.
In other words, every  primal solution $X$ belongs to $F_{\mathcal{L}_+^V}(P^{\top}P)$, 
and by (\ref{eq:realface}) $X$ is written by $X=P^{\top}A^{\top}AP$ for some $A\in \mathbb{R}^{d\times d}$.
In terms of tensegrities, this means that  any tensegrity $(G,\sigma, q)$ dominated by $(G,\sigma,p)$ is an affine image of $(G,\sigma, p)$, i.e., 
$q_i=Ap_i\ (i\in V(G))$ for some $A\in \mathbb{R}^{d\times d}$.

Again by the complementarity slackness condition along with strictly properness of $\omega$, 
$\|q_i-q_j\|=\|p_i-p_j\|$ for any $ij\in E(G)$. Putting $q_i=Ap_i\ (i\in V(G))$  into this, 
we get $(p_i-p_j)^{\top} (I_d-A^{\top}A)(p_i- p_j)=0$ for every edge $ij\in E(G)$. 
By the conic condition for the edge directions, this finally implies that $A^{\top}A=I_d$, i.e.,  $A$ is orthogonal, and  $p$ and  $q$ are congruent.
\fi

\if0
This implies the following simplified version of Theorem~\ref{thm:connelly}.
\begin{theorem}[Generic version of Connelly's super stability condition]\label{thm:connelly_generic}
Suppose that a generic $d$-dimensional tensegrity $(G,\sigma,p)$ with $n$ vertices has a strictly proper equilibrium stress $\omega$ such that 
$L_{G,\omega}\succeq 0$ and $\rank L_{G,\omega}=n-d-1$.
Then $(G,\sigma,p)$ is universally rigid.
\end{theorem}
\fi 

Connelly~\cite{connelly2005generic} pointed out that for a {\em generic} bar-joint framework with at least $d+1$ vertices the conic condition for the edge directions always holds.
 Recent papers~\cite{alfakih2013affine,connellygortlertheran} examine how to ensure the conic condition for the edge directions without genericity. 
 For practical purpose the following statement due to Alfakih and Nguyen would be sufficiently general.
 \begin{theorem}[Alfakih and Nguyen~\cite{alfakih2013affine}] \label{thm:alfakih}
Let $(G,\sigma,p)$ be a $d$-dimensional tensegrity such that $p(\overline{N}_G(i))$ affinely spans $\mathbb{R}^d$ for each $i\in V(G)$.
Suppose also that $(G,\sigma,p)$ has a strictly proper equilibrium stress $\omega$ such that 
$L_{G,\omega}\succeq 0$ and $\rank L_{G,\omega}=n-d-1$.
Then the conic condition for the edge direction holds.
\end{theorem}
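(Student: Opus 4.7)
The plan is to argue by contradiction: suppose there is a nonzero symmetric $S \in \mathcal{S}^d$ with $(p_i-p_j)^\top S(p_i-p_j)=0$ for every $ij \in E(G)$, and aim to derive $S = 0$. The first move is to lift $S$ to the Laplacian space by setting $M := P^\top S P \in \mathcal{L}^V$. Since $p$ affinely spans $\mathbb{R}^d$ and is centred at the origin, $P$ has full row rank $d$, so $M \neq 0$. The edge condition on $S$ rewrites as $\langle M, F_{ij}\rangle = 0$ for every $ij \in E(G)$, and the equilibrium identity $P L_{G,\omega} = 0$ gives $L_{G,\omega}M = 0$.

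By Proposition~\ref{prop:realface}, the perturbation $P^\top P + tM = P^\top(I + tS)P$ lies in the exposed face $F_{\mathcal{L}_+^V}(P^\top P)$ for all $|t|$ small enough that $I + tS \succ 0$. Thus the $d$-dimensional configuration $q_t := (I+tS)^{1/2}p$ has the same pairwise distance as $p$ on every edge of $G$, and so the linear map $A_t := (I+tS)^{1/2}$ preserves the length of each edge vector $p_i - p_j$. Establishing the conic condition now reduces to proving that $A_t$ must be orthogonal, for then $I + tS = I$ and hence $S = 0$.

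To force orthogonality of $A_t$, I would work star by star. Fix $i \in V(G)$. The local affine spanning hypothesis says that $\{p_i-p_j : j\in N_G(i)\}$ linearly spans $\mathbb{R}^d$, while strict properness of $\omega$ produces a signed nontrivial relation $\sum_{j\in N_G(i)}\omega_{ij}(p_i-p_j)=0$ with $\sigma(ij)\omega_{ij} \ge 0$ and strict inequality on cables and struts. Combining this signed equilibrium with the edge condition on $S$ — e.g.\ by applying $S$ to the equilibrium identity and pairing with the $p_i - p_j$'s — should rule out any nontrivial symmetric perturbation locally at $i$, and the rank maximality $\mathrm{rank}\, L_{G,\omega} = n - d - 1$ then supplies the global coupling needed to propagate the vanishing to all of $\mathbb{R}^d$, plausibly through a stress-energy expansion of $\sum_{ij \in E(G)} \omega_{ij}\|A_t(p_i-p_j)\|^2$ around $t = 0$ that uses the PSD max-rank structure of $L_{G,\omega}$ to isolate the per-vertex contributions.

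The main obstacle is precisely this final step: upgrading ``$A_t$ preserves lengths of a spanning set'' to ``$A_t$ is orthogonal''. The former is literally our hypothesis on $S$ and is insufficient on its own, so the argument must genuinely use both the strict properness of $\omega$ and the rank maximality of $L_{G,\omega}$ in concert with the local affine spanning. I expect the technically delicate part to be distributing the global edge constraints into per-vertex statements and exploiting the signed equilibrium coefficients to kill the remaining symmetric degrees of freedom of $S$ — without the signs of $\omega$ playing a role, one can only conclude length preservation on a spanning set, which is not enough.
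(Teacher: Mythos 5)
The paper does not actually prove this statement: it cites it as a known result of Alfakih and Nguyen (with a further pointer to Connelly--Gortler--Theran for stronger conditions), so there is no in-paper proof to compare against. On that footing, what matters is whether your proposal could stand as a self-contained proof, and here there is a genuine gap that you yourself flag.

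The setup is fine and correct as far as it goes: defining $M = P^\top S P$, noting $M \neq 0$ since $P$ has full row rank, verifying $\langle M, F_{ij}\rangle = (p_i - p_j)^\top S (p_i - p_j) = 0$, deducing $L_{G,\omega} M = 0$ from $PL_{G,\omega} = O$, and observing that $P^\top(I+tS)P$ stays in the face $F_{\mathcal{L}^V_+}(P^\top P)$ for small $t$ --- all of this is sound. The problem is that the ``reduction'' you propose afterwards is tautological rather than a reduction: saying ``it suffices to show $A_t = (I+tS)^{1/2}$ is orthogonal'' is literally the same as saying ``it suffices to show $S = 0$'', which is the conclusion you were trying to establish. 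Nothing has been gained. And the place where the actual work has to happen --- extracting $S = 0$ from the strict properness of $\omega$, the max-rank PSD condition on $L_{G,\omega}$, and the local affine spanning of $p(\overline N_G(i))$ --- is left as ``should rule out\dots'', ``plausibly through a stress-energy expansion\dots''. The stress-energy expansion you gesture at, $\sum_{ij}\omega_{ij}\|A_t(p_i-p_j)\|^2$, is in fact identically equal to $\sum_{ij}\omega_{ij}\|p_i-p_j\|^2$ for all $t$ by the very hypothesis on $S$, so its Taylor expansion around $t=0$ is trivially flat and yields no information; this particular route cannot produce the per-vertex constraints you want.

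The missing idea is the actual content of the Alfakih--Nguyen argument, which is not a perturbation or energy argument at all. It exploits the fact that $\rank L_{G,\omega}=n-d-1$ and $L_{G,\omega}\succeq 0$ pin down $\ker L_{G,\omega}$ exactly as the $(d+1)$-dimensional space spanned by the rows of $P$ and $\bm 1_V$, and then uses the sparsity pattern of $L_{G,\omega}$ (nonzero only on $\overline N_G(i)$ in row $i$, with the strict sign constraints on the off-diagonals) together with the hypothesis that $p(\overline N_G(i))$ affinely spans $\mathbb{R}^d$ to show directly that any $S$ annihilating all the edge directions must vanish. Your write-up never engages with the sparsity/support structure of $L_{G,\omega}$ or with how the local spanning hypothesis interacts with it, which is where the theorem's strength actually lives; without that, the argument does not close.
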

See~\cite{connellygortlertheran} for stronger sufficient conditions.

\subsection{Characterization by Gortler and Thurston}
Gortler-Thuston~\cite{GT} gave a reverse direction of Theorem~\ref{thm:connelly} for generic bar-joint frameworks.
\begin{theorem}[Gortler-Thurston~\cite{GT}]\label{thm:gortler_thurston}
A generic $d$-dimensional bar-joint framework $(G,p)$ with $n\geq d+2$ vertices is universally rigid 
if and only if it has an equilibrium stress $\omega$ such that 
$L_{G,\omega}\succeq 0$ and $\rank L_{G,\omega}=n-d-1$.
\end{theorem}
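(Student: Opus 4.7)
The plan is to handle the two directions of the biconditional separately; only the converse is substantial. For the \emph{if} direction, suppose $\omega$ is a PSD equilibrium stress with $\rank L_{G,\omega}=n-d-1$. Since every edge is a bar, $E_+=E_-=\emptyset$, so strict properness of $\omega$ is vacuous. Genericity of $p$ guarantees the conic condition for edge directions (Connelly's observation~\cite{connelly2005generic}), so Theorem~\ref{thm:connelly} directly yields universal rigidity.

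For the \emph{only if} direction, I would work with the SDP pair (P) and (D). Proposition~\ref{prop:uniqueness} says universal rigidity is equivalent to $P^\top P$ being the unique feasible solution of (P). The goal is to exhibit a dual optimal $\omega$ whose Laplacian $L_{G,\omega}$ has rank $n-d-1$; by Proposition~\ref{prop:realface} this is equivalent to $L_{G,\omega}$ exposing the minimal face $F_{\mathcal{L}^V_+}(P^\top P)$, i.e., to the strict-complementarity condition for the primal-dual pair.

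I would pick a PSD equilibrium stress $\omega^{*}$ of maximum rank $r$ and argue by contradiction that $r<n-d-1$ is impossible. The bound $r\leq n-d-1$ holds because $\bm{1}_V$ together with the rows of $P$ span a $(d+1)$-dimensional subspace of $\ker L_{G,\omega^{*}}$ (from the Laplacian property and the equilibrium equation). If $r<n-d-1$, then the complementary face
\[
F^{*}=\bigl\{X\in \mathcal{L}^V_+ : \langle X, L_{G,\omega^{*}}\rangle=0\bigr\}
\]
has dimension $\binom{n-r}{2}$, strictly greater than $\binom{d+1}{2}=\dim F_{\mathcal{L}^V_+}(P^\top P)$. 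By complementary slackness, applied via Proposition~\ref{prop dual and ESM} which guarantees $\omega^{*}$ is dual optimal, every primal feasible $X$ lies in $F^{*}$, so universal rigidity forces the fiber of the edge-length map restricted to $F^{*}$ to collapse to the single point $P^\top P$.

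The main obstacle is to rule out this collapse using the genericity of $p$. Elements of $F^{*}$ correspond, via equation~(\ref{eq:conf2}), to configurations in $\mathbb{R}^{n-1-r}$ with $n-1-r>d$, so the question becomes whether some such higher-dimensional configuration $q$ realizes the edge lengths $\|p_i-p_j\|$ without being congruent to $p$. Both the maximality of $r$ and the existence of a non-trivial deformation are semi-algebraic conditions on $p$, and the crux is to show that only the former is compatible with $r<n-d-1$. Concretely, I would parametrize $F^{*}$ by Gram matrices, translate the edge-length equations and the uniqueness requirement into a polynomial system in the coordinates of $p$, and argue that the simultaneous occurrence of $r<n-d-1$ and uniqueness of the primal solution forces a nontrivial polynomial relation in the coordinates of $p$ over $\mathbb{Q}$, which is excluded by genericity. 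Setting up this Zariski-style dichotomy and identifying the correct polynomial obstructions is the technical heart of the argument.
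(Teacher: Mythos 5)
Your sufficiency direction is correct. For the necessity, your setup is sound as far as it goes — Proposition~\ref{prop:uniqueness} and Proposition~\ref{prop:realface} are the right translations, and the dimension count for $F^{*}$ when $r<n-d-1$ is correct — but the argument stops exactly where the real work starts. Asserting that ``$r<n-d-1$ together with primal uniqueness forces a nontrivial rational polynomial relation in the coordinates of $p$'' is not a lemma you can invoke; it \emph{is} the theorem, stated in contrapositive, and you supply no mechanism for producing such a relation. There is also a hidden difficulty your plan does not address: the maximum rank $r$ of a PSD equilibrium stress is itself a semi-algebraic function of $p$, so a contradiction argument of this shape would have to stratify configuration space by the value of $r$ and reason separately on each stratum.

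The paper's proof (following Gortler--Thurston) avoids the contradiction framing and works by direct construction via a projection. Let $\pi:\mathcal{L}^V \to \mathcal{L}(G)$ be the orthogonal projection onto $\mathcal{L}(G)=\operatorname{span}\{F_{ij}: ij\in E(G)\}$. If the minimal face of $\pi(P^{\top}P)$ inside $\pi(\mathcal{L}^V_+)$ is exposed, then the exposing hyperplane is defined by some $L\in\mathcal{L}(G)$, which by construction equals $L_{G,\omega}$ for some $\omega$, and $\pi^{-1}$ of that hyperplane exposes $F_{\mathcal{L}^V_+}(P^\top P)$; Proposition~\ref{prop:realface} then delivers $L\succeq 0$, $\rank L = n-d-1$ and $\langle P^\top P,L\rangle=0$, i.e.\ a PSD equilibrium stress of the right rank. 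The obstruction — and the genuine technical content that your proposal is missing — is that projections of exposed convex sets need not be exposed. This is resolved by Proposition~\ref{prop:GT}: if a point is \emph{locally generic} in $\text{ext}_k(C)$ and its $\pi$-fiber in $C$ is a singleton, then its minimal face is exposed by a $\pi$-pullback hyperplane. One applies this with $C=\mathcal{L}^V_+$ and $k=\binom{d+1}{2}$; the hypothesis that $P^\top P$ is locally generic in $\text{ext}_k(\mathcal{L}^V_+)$ is extracted from genericity of $p$ via the transfer lemma Proposition~\ref{prop inherit genericity} applied to the algebraic map $q\mapsto Q^\top Q$, together with the identification of $\text{ext}_k(\mathcal{L}^V_+)$ with the rank-$\leq d$ stratum. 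The notions of $k$-extreme points, local genericity, and the exposedness criterion of Proposition~\ref{prop:GT} are precisely what turn the ``Zariski-style dichotomy'' you gesture at into an actual argument; without them your proof does not close.
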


The sufficiency is due to Connelly. 
The proof of the necessity goes as follows.
Suppose that $(G,p)$ is a universally rigid generic framework, and we want to find  $L_{G,\omega}$ as given in the statement.
Translate $p$ so that the center of gravity is at the origin, and consider the face $F_{\mathcal{L}_+^V}(P^{\top}P)$. 
By Proposition~\ref{prop:realface}, $F_{\mathcal{L}_+^V}(P^{\top}P)$ is exposed by the hyperplane $\{X\in \mathcal{L}^V: \langle X, L\rangle=0\}$ for some $L\in \mathcal{L}^V_+$ with 
\begin{align}
&\rank P^{\top}P+\rank L=n-1, \label{eq:GT1}\\
&\langle P^{\top} P, L\rangle=0. \label{eq:GT2} 
\end{align}
 By $L\in \mathcal{L}^V_+$, we have $L\succeq 0$, and by (\ref{eq:GT1}), we also have $\rank L=n-d-1$.
 Hence, if $L=L_{G,\omega}$ for some $\omega:E(G)\rightarrow \mathbb{R}$ (i.e., $(i,j)$-th entry of $L$ is zero if $ij\notin E(G)$), 
 then Proposition~\ref{prop dual and ESM} and  (\ref{eq:GT2})  imply that $\omega$ is an equilibrium stress. 
Therefore, what is remaining is to prove that  $F_{\mathcal{L}_+^V}(P^{\top}P)$ is exposed by the hyperplane defined by $L_{G,\omega}$ for some $\omega$. 

To find such $L_{G,\omega}$, consider the subspace 
\[
\mathcal{L}(G):={\rm span}\{F_{ij}: ij\in E(G)\}
\]
of $\mathcal{L}^V$. 
The idea is to look at the projection $\pi$ of $\mathcal{L}^V$ to $\mathcal{L}(G)$,
and compute the hyperplane $H$ of $\mathcal{L}(G)$ exposing the minimal face of  $\pi(P^{\top}P)$ in $\pi(\mathcal{L}^V_+)$.
Then  this hyperplane $H$  is defined by $L\in \mathcal{L}(G)$, which is equivalent to having an expression $L=L_{G,\omega}$ for some $\omega$,
and $\pi^{-1}(H)$ (defined by $L$) would be the hyperplane of $\mathcal{L}^V$ exposing $F_{\mathcal{L}_+^V}(P^{\top}P)$  as required.

There is one technical subtlety in this argument: 
the minimal face of  $\pi(P^{\top}P)$ in $\pi(\mathcal{L}_+^V)$ may not be exposed. 
(Even if $\mathcal{L}_+^V$ is exposed, $\pi(\mathcal{L}_+^V)$ may not be exposed.)
The main technical observation of Gortler-Thurston~\cite{GT} is to prove that, if $P^{\top} P$ is generic in a certain sense, $\pi(P^{\top}P)$ is exposed.  

Our proof follows the same technique, and a detailed description will be given in Section~\ref{sec:tensegrity}. 
In order to give a rigorous discussion,  we review the following materials  from  \cite{GT}.
 
  A subset of a Euclidean space is {\em semi-algebraic} over $\mathbb{Q}$ if it is described by finite number of algebraic equalities and inequalities
        whose coefficients are rationals.
        Let $S$ be a semi-algebraic set defined over $\mathbb{Q}$.
        A point $x \in S$ is {\em generic in} $S$ if there is no rational coefficient polynomial $f$
        such that $f(x)=0$ and $f(y) \neq 0$ for some $y \in S$.
        A point $x \in S$ is {\em locally generic in} $S$ if for small enough $\epsilon>0$, $x$ is generic in $S \cap B_\epsilon(x)$.
    The following proposition can be used to "transfer" the genericity of a point configuration  $p$ to  $P^{\top}P$. 
    \begin{prop} [{Gortler-Thurston~\cite[Lemma 2.6]{GT}}]\label{prop inherit genericity}
        Let $S$ be a semi-algebraic set define over $\mathbb{Q}$ and $f$ be an algebraic map from $S$ to a Euclidean space.
        If $x$ is generic in $S$, $f(x)$ is generic in $f(S)$. 
    \end{prop}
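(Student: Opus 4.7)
The plan is to recast the genericity condition in terms of ideals of rational polynomials vanishing on subsets, and then exploit the contravariance of these ideals under polynomial maps.

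I would begin by introducing, for any subset $T$ of a Euclidean space, the ideal $I(T)$ of rational coefficient polynomials that vanish pointwise on $T$. The definition of genericity translates directly into the equation $I(\{x\})=I(S)$, i.e., $\{x\}$ and $S$ have the same Zariski closure over $\mathbb{Q}$. The task then reduces to verifying that $I(\{f(x)\})=I(f(S))$, which is equivalent to the conclusion that $f(x)$ is generic in $f(S)$.

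The inclusion $I(\{f(x)\})\supseteq I(f(S))$ is immediate since $f(x)\in f(S)$. For the reverse inclusion, I would take a rational polynomial $g$ with $g(f(x))=0$. The key observation is that, when $f$ is given coordinate-wise by polynomials in $\mathbb{Q}[X]$, the composition $g\circ f$ is again a rational coefficient polynomial, so $(g\circ f)(x)=0$. Invoking the genericity hypothesis $I(\{x\})=I(S)$ would then force $g\circ f$ to vanish identically on $S$, so $g$ vanishes on $f(S)$, as required.

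The main obstacle is the precise meaning of ``algebraic map''. If $f$ is polynomial over $\mathbb{Q}$, the argument above concludes the proof directly. If $f$ is only rational---components $p_i/q_i$ with each $q_i\in\mathbb{Q}[X]$ nonvanishing on $S$---then $g\circ f$ is a rational function rather than a polynomial, and a clearing-of-denominators step is required: multiply by a suitable power of a common denominator $q\in\mathbb{Q}[X]$ with $q|_S\neq 0$ to obtain a polynomial $h=q^N(g\circ f)$, apply genericity to conclude $h|_S\equiv 0$, and then divide out $q^N$ pointwise on $S$. For a fully general algebraic map, I would first reduce to the rational case by passing to a Zariski-open locus on which $f$ admits a rational representation, and then repeat this argument.
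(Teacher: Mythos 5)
The paper does not prove this proposition; it is quoted verbatim from Gortler--Thurston \cite[Lemma~2.6]{GT}, so there is no in-paper proof to compare against. Evaluating your argument on its own merits: the core of your proof --- recasting genericity as $I(\{x\})=I(S)$ and pulling a polynomial $g$ back to $g\circ f\in\mathbb{Q}[X]$, then invoking $I(\{x\})=I(S)$ --- is exactly the standard (and correct) argument when $f$ is given coordinate-wise by rational-coefficient polynomials, and your clearing-of-denominators extension to $\mathbb{Q}$-rational $f$ with nonvanishing denominators on $S$ is likewise fine.

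The final step is where there is a genuine gap. You propose to handle a fully general algebraic map by ``passing to a Zariski-open locus on which $f$ admits a rational representation,'' but in real semi-algebraic geometry this reduction is not available: an algebraic (Nash) function such as $s\mapsto\sqrt{1+s^2}$ on $\mathbb{R}$, or $s\mapsto\sqrt{s}$ on $\mathbb{R}_{\geq 0}$, admits no rational representation on any Zariski-dense subset of its domain, because the single branch one needs is selected by a sign condition rather than by polynomial equations alone. A correct treatment of the general case has to work with the $\mathbb{Q}$-semi-algebraic graph of $f$ directly (transferring genericity from $x$ to the point $(x,f(x))$ on the graph and then projecting), and even there one must be careful about what ``algebraic map'' means --- if it were taken to mean merely ``semi-algebraic over $\mathbb{Q}$,'' the statement would in fact be false, as the map $s\mapsto(s,\lvert s\rvert)$ on $\mathbb{R}$ with $g(u,v)=u-v$ shows. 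That said, every application of this proposition in the present paper is to a map of the form $q\mapsto Q^{\top}Q$ or $q\mapsto \Psi(Q^{\top}Q)$, which is polynomial over the relevant coefficient field, so your polynomial-case argument already covers everything the paper needs.
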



Let $C$ be a non-empty convex set in a Euclidean space. 
$C$ is {\em line-free} if it contains no complete affine line.
A point  $x \in C$ is {\em $k$-extreme} if $\dim F_C(x) \leq k$.
We denote by $\text{ext}_k(C)$ the set of $k$-extreme points of $C$.

We will use the following combination of \cite[Proposition 4.14]{GT} and \cite[Theorem 2]{GT}, which is also explicit in the proof of the main theorem of \cite{GT}.
 \begin{prop} \label{prop:GT}
        Let $C$ be a closed line-free convex semi-algebraic set in $\mathbb{R}^m$, and $\pi:\mathbb{R}^m\rightarrow \mathbb{R}^n$ be a projection, both defined over $\mathbb{Q}$.
       Suppose that $x$ is locally generic in $\text{ext}_k(C)$ for some $k$ and $\pi^{-1}(\pi(x))\cap C$ is a singleton set.
        Then there exists a hyperplane $H$ in $\mathbb{R}^n$ such that $\pi^{-1}(H)$ exposes $F_C(x)$.
    \end{prop}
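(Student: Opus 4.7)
The plan is to find an exposing hyperplane for $F_C(x)$ of the form $\pi^{-1}(H)$ by passing to the image $D := \pi(C)$ and exploiting the interplay between local genericity and the singleton-fiber condition.

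First, I would study $D = \pi(C) \subseteq \mathbb{R}^n$, which is a convex semi-algebraic set defined over $\mathbb{Q}$. Applying Proposition~\ref{prop inherit genericity} to the restriction of $\pi$ to $\text{ext}_k(C)$, the image $\pi(x)$ is locally generic in $\pi(\text{ext}_k(C))$. I would then argue that the minimal face $F_D(\pi(x))$ is exposed in $D$ by some hyperplane $H \subseteq \mathbb{R}^n$. The underlying reason is that the set of points of a convex semi-algebraic set whose minimal face is non-exposed forms a semi-algebraic subset of strictly lower dimension than the generic boundary stratum of the same face-dimension; local genericity of $\pi(x)$ forces it to avoid this pathological locus, so $F_D(\pi(x))$ is exposed.

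With such $H$ in hand, I set $F := \pi^{-1}(H) \cap C$. Since $H$ supports $D$, the preimage $\pi^{-1}(H)$ supports $C$, so $F$ is a face of $C$ containing $x$ and therefore containing $F_C(x)$. For the reverse inclusion I would use the singleton condition: any $y \in F$ satisfies $\pi(y) \in H \cap D = F_D(\pi(x))$, and combining the semi-algebraic nature of the locus $\{y \in C : \pi^{-1}(\pi(y)) \cap C = \{y\}\}$ with local genericity, a perturbation argument shows that no $y \in F \setminus F_C(x)$ can exist — otherwise the singleton condition would fail at a nearby locally generic point of $\text{ext}_k(C)$, violating the transfer of genericity guaranteed by Proposition~\ref{prop inherit genericity}.

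The main obstacle will be the second step — producing the exposing hyperplane $H$ for $F_D(\pi(x))$ in $D$. Projections of convex semi-algebraic sets can introduce non-exposed faces that are not visible from the description of $C$, and pinpointing which boundary points are exposed requires a careful semi-algebraic stratification of $\partial D$. The key technical device is to combine the Tarski--Seidenberg-style semi-algebraic description of $D$ with the transfer of local genericity, forcing $\pi(x)$ into the exposed stratum; after that, the singleton condition lifts the exposure cleanly from $\pi(x)$ in $D$ to the entire face $F_C(x)$ in $C$.
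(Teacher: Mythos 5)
The paper does not prove this proposition itself; it imports it from Gortler--Thurston (their Proposition~4.14 and Theorem~2). Your high-level plan — pass to $D=\pi(C)$, show $F_D(\pi(x))$ is exposed, then pull back — matches the GT strategy, but both of your two key steps have real gaps.

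For the exposedness of $F_D(\pi(x))$, your stated reason (``the set of points whose minimal face is non-exposed forms a semi-algebraic subset of strictly lower dimension'') is the right intuition, but it is precisely the content of GT's Theorem~2 and is a substantial theorem, not an observation. Moreover the way you invoke it is not quite aligned with what you have: Proposition~\ref{prop inherit genericity} gives that $\pi(x)$ is (locally) generic in $\pi(\text{ext}_k(C))$, but the ``pathological locus'' you need to avoid lives inside some stratum $\text{ext}_j(D)$ of $D$, and $\pi(\text{ext}_k(C))$ is generally neither contained in nor equal to any such stratum, since projection can raise or lower the dimension of minimal faces. GT's theorem is formulated so that the genericity hypothesis is imposed on $x$ in $C$ (together with the singleton-fiber condition), and the exposedness of $\pi(F_C(x))$ in $D$ is the conclusion; you cannot shortcut this by ``transferring'' genericity into a boundary stratum of $D$ and applying a generic-exposedness heuristic there.

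The second gap is more clear-cut: your argument for the reverse inclusion $\pi^{-1}(H)\cap C\subseteq F_C(x)$ is not correct, and in fact this inclusion needs no perturbation or genericity at all. The existence of some $y\in F\setminus F_C(x)$ does not contradict the singleton condition at $x$ (that condition constrains only the fiber over $\pi(x)$, not over $\pi(y)$), and it does not contradict Proposition~\ref{prop inherit genericity}, which is about transferring genericity under algebraic maps and says nothing about fibers being singletons at nearby points. The correct argument is purely convex-geometric: set $G:=\pi^{-1}(F_D(\pi(x)))\cap C$, observe $G$ is a face of $C$ with $\pi(G)=F_D(\pi(x))$, note that $\pi(x)\in\text{relint}(\pi(G))$ so some point of $\text{relint}(G)$ lies over $\pi(x)$, and conclude from $\pi^{-1}(\pi(x))\cap G=\{x\}$ that $\dim G=\dim\pi(G)$, i.e.\ $\pi$ is an affine isomorphism of $\text{aff}(G)$ onto $\text{aff}(F_D(\pi(x)))$. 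Then $\pi(F_C(x))$ is a face of $F_D(\pi(x))$ containing $\pi(x)$ in its relative interior, hence equals $F_D(\pi(x))$, and injectivity gives $F_C(x)=G$; once $F_D(\pi(x))$ is exposed by $H$, this gives $\pi^{-1}(H)\cap C=F_C(x)$ exactly. Replacing your perturbation sketch with this deterministic argument both fixes the step and clarifies that genericity is only needed for the exposedness of $F_D(\pi(x))$, which remains the unaddressed heart of the proof.
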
   

\section{Characterizing the Universal Rigidity of Tensegrities}\label{sec:tensegrity}
In this section we prove an extension of Theorem~\ref{thm:gortler_thurston} to tensegrities.
 \begin{theorem} \label{maintheoremtens}
        Let $(G,\sigma,p)$ be a generic $d$-dimensional tensegrity with $n \geq d+2$ vertices.
        Then  $(G,\sigma,p)$ is universally rigid if and only if  it has a strictly proper equilibrium stress $\omega$ such that 
        $\rank L_{G,\omega}=n-d-1$ and $L_{G,\omega}\succeq 0$.
 \end{theorem}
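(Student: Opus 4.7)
\emph{Sufficiency} is essentially immediate: Connelly observed in~\cite{connelly2005generic} that the conic condition on edge directions in Theorem~\ref{thm:connelly}(ii) is automatic for generic point configurations with $n\geq d+1$ vertices, and since that condition depends only on $(G,p)$ and not on $\sigma$, Theorem~\ref{thm:connelly} yields the conclusion directly from the stated hypothesis.

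The plan for \emph{necessity} is to adapt the Gortler-Thurston exposing-hyperplane argument to tensegrities by enlarging the ambient cone with slack variables that encode the one-sided constraints from cables and struts. After translating $p$ so that its center of gravity is at the origin, which preserves genericity in $\mathcal{C}_d(V)$, introduce a slack $s_e\geq 0$ for each $e\in E_+\cup E_-$ and work in the closed, convex, pointed, semi-algebraic cone
\[
\tilde{C}:=\mathcal{L}_+^V\times\mathbb{R}_{\geq 0}^{E_+\cup E_-}
\]
together with the $\mathbb{Q}$-linear map $\tilde{\pi}:\mathcal{L}^V\times\mathbb{R}^{E_+\cup E_-}\to\mathbb{R}^{E(G)}$ whose $e$-th coordinate equals $\langle X,F_e\rangle$ for $e\in E_0$ and $\langle X,F_e\rangle+\sigma(e)s_e$ for $e\in E_+\cup E_-$. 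The fiber $\tilde{\pi}^{-1}(\tilde{\pi}(P^{\top}P,0))\cap\tilde{C}$ is in bijection with the feasible set of~(P), because each slack variable records the gap in the corresponding inequality; hence by Proposition~\ref{prop:uniqueness} universal rigidity forces this fiber to be the singleton $\{(P^{\top}P,0)\}$.

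The main obstacle is the verification of the genericity hypothesis of Proposition~\ref{prop:GT} at $(P^{\top}P,0)$ with $k=\binom{d+1}{2}$. By the product structure of faces of $\tilde{C}$, a point $(X,s)\in\tilde{C}$ is $k$-extreme iff $\binom{\rank X+1}{2}+|\mathrm{supp}(s)|\leq k$. Lower semicontinuity of rank on $\mathcal{L}_+^V$ ensures that every $(X,s)\in\tilde{C}$ sufficiently close to $(P^{\top}P,0)$ satisfies $\rank X\geq d$, which together with the extremity bound forces $\rank X=d$ and $s=0$. Consequently, in a neighborhood of $(P^{\top}P,0)$ the set $\text{ext}_{\binom{d+1}{2}}(\tilde{C})$ reduces to $\{(Q^{\top}Q,0):q\in\mathcal{C}_d(V)\}$ by~(\ref{eq:conf2}), and local genericity of $(P^{\top}P,0)$ in this set follows from genericity of $p$ in $\mathcal{C}_d(V)$ via Proposition~\ref{prop inherit genericity} applied to the $\mathbb{Q}$-algebraic map $q\mapsto(Q^{\top}Q,0)$.

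Proposition~\ref{prop:GT} then provides a hyperplane $\tilde{H}\subset\mathbb{R}^{E(G)}$ such that $\tilde{\pi}^{-1}(\tilde{H})$ exposes the cone face $F_{\tilde{C}}(P^{\top}P,0)=F_{\mathcal{L}_+^V}(P^{\top}P)\times\{0\}$; since this face contains the origin, the hyperplane may be taken to pass through the origin. The adjoint of $\tilde{\pi}$ carries $\mathbb{R}^{E(G)}$ into $\mathcal{L}(G)\times\mathbb{R}^{E_+\cup E_-}$, so the defining functional of $\tilde{\pi}^{-1}(\tilde{H})$ has the form $(L_{G,\omega},w)$ with $w_e=\sigma(e)\omega_e$ for some $\omega:E(G)\to\mathbb{R}$. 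Because any supporting functional of $\mathbb{R}_{\geq 0}^{E_+\cup E_-}$ exposing the face $\{0\}$ must be strictly positive, $w_e>0$ and hence $\sigma(e)\omega_e>0$ for every $e\in E_+\cup E_-$, i.e.\ $\omega$ is strictly proper. Applying Proposition~\ref{prop:realface} to the first factor simultaneously yields $L_{G,\omega}\succeq 0$, $\rank L_{G,\omega}=n-d-1$, and $\langle L_{G,\omega},P^{\top}P\rangle=0$; the last condition is equivalent to $PL_{G,\omega}=0$ when both matrices are PSD, so $\omega$ is an equilibrium stress, completing the proof.
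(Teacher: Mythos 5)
Your proof is correct and follows essentially the same route as the paper: you introduce slack variables for the one-sided constraints, verify local genericity of $(P^\top P,\bm{0})$ in $\text{ext}_{\binom{d+1}{2}}$ of the product cone via the product structure of faces together with lower semicontinuity of rank, and then apply Proposition~\ref{prop:GT} to obtain an exposing hyperplane whose normal, after pulling back, yields a strictly proper stress with the right rank. The only cosmetic differences are that you drop the trivial $\{0\}^{E_0}$ factor from the cone and phrase the linear map as landing in $\mathbb{R}^{E(G)}$ rather than in the subspace $\mathcal{K}(G)\subset\mathcal{K}$; since the defining vectors $(F_{ij},\sigma(ij)\bm{e}_{ij})$ are linearly independent, the two formulations are related by a $\mathbb{Q}$-linear isomorphism that preserves hyperplanes and singleton fibers, so this is equivalent. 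You also helpfully make explicit two small points the paper leaves implicit: that centering $p$ preserves genericity in $\mathcal{C}_d(V)$ (which follows from Proposition~\ref{prop inherit genericity}), and that for the sufficiency direction the conic condition in Theorem~\ref{thm:connelly}(ii) is automatic for generic configurations and depends only on the underlying graph, not on $\sigma$.
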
  
 The sufficiency follows from Theorem~\ref{thm:connelly}, and we focus on the necessity.
 When applying Gortler-Thurston's  proof  to tensegrities, we need to further ensure that $\omega$ is proper, i.e., the sign condition~(\ref{eq:proper}). 
 In the above proof sketch, this requires to find an exposing hyperplane of $F_{\mathcal{L}_+^V}(P^{\top}P)$ satisfying a sign condition on non-zero entries. We show how to get around this  by a simple trick.

\begin{proof}[Proof of Theorem~\ref{maintheoremtens}]
To see the necessity, suppose that $(G,\sigma,p)$ is universally rigid. 
Translate the configuration so that the center of gravity is at the origin.
Our idea is to introduce a slack variable for each constraint of (P). 
For this, we consider the ambient space ${\cal K}$ and convex cone ${\cal K}_+$ defined by
\begin{align*}
\mathcal{K}:={\cal L}^V\times \mathbb{R}^{E_\pm} \times \{0\}^{E_0} \text{ and }
\mathcal{K}_+:={\cal L}^V_+\times \mathbb{R}^{E_\pm}_{\geq 0} \times \{0\}^{E_0},
\end{align*}
respectively, where $E_{\pm}=E_+ \cup E_-$.
In the following discussion, an element in ${\cal K}$ is often denoted by a pair $(X,s)$ with $X \in {\cal L}^V$ and $s \in \mathbb{R}^{E_\pm} \times \{0\}^{E_0}$.
$\langle \cdot \rangle$ denotes the Euclidean inner product in ${\cal K}$.
Consider the following SDP over $\mathcal{K}$:
    \[
\begin{array}{llll}
        \text{(P')} &  \text{max.}  & 0 \\
         & \text{s.t.}  & \langle (X,s), (F_{ij}, \sigma(ij) {\bm e}_{ij}) \rangle   =  \| p_i-p_j\|^2 &(ij \in E(G)) \\
         &              & (X,s) \in \mathcal{K}_+,
    \end{array}
\]
where, for $ij \in E(G)$, ${\bm e}_{ij}$ denotes the unit vector in $\mathbb{R}^{E(G)}$ such that the $ij$-th entry is one.
Observe that $X$ is feasible in (P) if and only if  $(X,s)$ is feasible in (P') for some $s\in \mathbb{R}_{\geq 0}^{E_\pm} \times \{0\}^{E_0}$.
As $(G,\sigma,p)$ is universally rigid, Proposition~\ref{prop:uniqueness} implies that $(P^{\top}P, {\bm 0})\in \mathcal{K}$ is the unique solution of (P').

 We shall apply Proposition~\ref{prop:GT} to this setting.
 To do so, we need to prove the local genericity of $(P^{\top}P, {\bm 0})$. 
%
    \begin{claim} \label{claim:tensegrity}
        Let $k=\binom{d+1}{2}$.
        Then $(P^\top P,\bm{0})\in \mathcal{K}$ is locally generic in $\text{ext}_k(\mathcal{K}_+)$.
    \end{claim}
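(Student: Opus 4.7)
The plan is to exploit the product structure $\mathcal{K}_+=\mathcal{L}^V_+ \times (\mathbb{R}^{E_\pm}_{\geq 0}\times \{0\}^{E_0})$ to describe the faces of $\mathcal{K}_+$ explicitly, and then to apply Gortler-Thurston's transfer principle (Proposition~\ref{prop inherit genericity}) to push genericity from $p$ to $(P^\top P, \bm{0})$.

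First, I would use the fact that faces of a product cone are products of faces of the factors. Together with Proposition~\ref{prop:realface}, this yields, for every $(X,s)\in \mathcal{K}_+$,
\[
\dim F_{\mathcal{K}_+}(X,s)=\binom{\rank X + 1}{2}+|\{e \in E_\pm: s_e > 0\}|.
\]
Since $p$ (which we may assume centered by a $\mathbb{Q}$-linear translation preserving genericity) affinely spans $\mathbb{R}^d$, we have $\rank P^\top P = d$ and $\dim F_{\mathcal{K}_+}(P^\top P,\bm{0})=\binom{d+1}{2}=k$, establishing $(P^\top P,\bm{0})\in \text{ext}_k(\mathcal{K}_+)$.

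Next, I would establish a local identification of $\text{ext}_k(\mathcal{K}_+)$ near $(P^\top P,\bm{0})$. Lower semicontinuity of the rank function on $\mathcal{S}^V_+$ forces every $X$ in a small enough $\mathcal{L}^V_+$-neighborhood of $P^\top P$ to satisfy $\rank X\geq d$, so the $k$-extremeness inequality $\binom{\rank X+1}{2}+|\{e: s_e>0\}|\leq \binom{d+1}{2}$ collapses to $\rank X = d$ and $s=\bm{0}$. Hence for a small enough rational $\epsilon>0$,
\[
\text{ext}_k(\mathcal{K}_+)\cap B_\epsilon(P^\top P,\bm{0}) = \bigl\{(Q^\top Q,\bm{0}): q\in \mathcal{C}_d(V)\bigr\}\cap B_\epsilon(P^\top P,\bm{0})
\]
by (\ref{eq:conf2}).

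To finish, I would appeal to Proposition~\ref{prop inherit genericity}: the generic $p$, lying in the $\mathbb{Q}$-semi-algebraic set $\mathcal{C}_d(V)$, is mapped by the polynomial map $q\mapsto (Q^\top Q,\bm{0})$ to a generic point of $\{(Q^\top Q,\bm{0}):q\in \mathcal{C}_d(V)\}$, and genericity transfers automatically to any $\mathbb{Q}$-semi-algebraic subset containing the point, in particular to the intersection with $B_\epsilon(P^\top P,\bm{0})$. The step I expect to demand the most care is the local identification in the preceding paragraph: the $k$-extremeness bound is tight at $(P^\top P,\bm{0})$, so one must rule out nearby extreme points in which $\rank X$ drops while some slack coordinates $s_e$ become positive; lower semicontinuity of rank is precisely the obstruction to such degenerations.
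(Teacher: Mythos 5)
Your proof follows essentially the same route as the paper's: identify $\text{ext}_k(\mathcal{K}_+)$ via the face-dimension formula on the product cone, use lower semicontinuity of rank to show that near $(P^\top P,\bm{0})$ the $k$-extreme points are exactly those with $\rank X=d$ and $s=\bm{0}$, and transfer genericity from $p$ through the algebraic map $q\mapsto(Q^\top Q,\bm{0})$ via Proposition~\ref{prop inherit genericity}. The only cosmetic difference is that the paper first records genericity of $P^\top P$ in all of $\text{ext}_k(\mathcal{L}^V_+)$ and then localizes, whereas you verify membership in $\text{ext}_k(\mathcal{K}_+)$ by computing the face dimension directly; both are sound, and your explicit $B_\epsilon$-intersection phrasing is if anything slightly more careful than the paper's displayed equality (which implicitly should be read as a containment).
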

    \begin{proof}
        A map $f: \mathcal{C}(V) \rightarrow \mathcal{L}^V_+$; $p \mapsto P^\top P$ is algebraic over $\mathbb{Q}$ and
        $f(\mathcal{C}_d(V) )$ equals to $\mathcal{L}_{+,d}:=\{L\in \mathcal{L}^V_+: \rank L=d\}$ by (\ref{eq:conf2}).
        Hence, by Proposition \ref{prop inherit genericity}, $P^\top P$ is generic in $\bigcup_{i\leq d} \mathcal{L}_{+,i}$.
%

        From Proposition~\ref{prop:realface} for $k=\binom{d+1}{2}$, 
        \begin{equation}\label{eq:ten_proof0}
        \text{ext}_k(\mathcal{L}_+^V)=\left\{L\in \mathcal{L}^V_+:\binom{\rank L+1}{2} \leq k \right\}=\bigcup_{i\leq d} \mathcal{L}_{+,i}.
        \end{equation}
        Hence,  $P^\top P$ is generic in  $\text{ext}_k(\mathcal{L}_+^V)$.
        (\ref{eq:ten_proof0}) also implies 
        \begin{equation}\label{eq:ten_proof1}
        \text{ext}_k(\mathcal{K}_+) = \left\{ (L,s) \in \mathcal{K}_+ : \binom{\rank  L+1}{2} + \|s\|_0  \leq k \right\},
        \end{equation}
        where $\|s\|_0$ denotes the number of non-zero elements of $s$.
        By the lower semi-continuity of rank, 
        there exists a neighborhood $U$ of $P^{\top}P$ in $\mathcal{L}^V$ in which the rank of any matrix is at least $d$.
   By (\ref{eq:ten_proof1}) and $k=\binom{d+1}{2}$, we have
   \[
   \left(U \times \mathbb{R}_{\geq 0}^{E_\pm} \times \{0\}^{E_0} \right)\cap \text{ext}_k(\mathcal{K}_+) = \left\{ (L,\bm{0}) \in \mathcal{K}_+ : \rank L =d \right\}.
   \]
%
Hence $(P^\top P, {\bm 0})$ is generic in $\left(U \times \mathbb{R}_{\geq 0}^{E_\pm} \times \{0\}^{E_0} \right)\cap \text{ext}_k(\mathcal{K}_+)$, meaning that $(P^\top P, {\bm 0})$ is locally generic in $\text{ext}_k(\mathcal{K}_+)$.
    \end{proof}

    We are now in a position to complete the proof.
    We consider the subspace
    \[
    \mathcal{K}(G):=\text{span} \{ (F_{ij},\sigma(ij)\bm{e}_{ij}) :  ij\in E(G)\}
    \]
    of $\mathcal{K}$,  and let $\pi: \mathcal{K} \rightarrow \mathcal{K}(G)$ be a projection.
        Since $(P^\top P,\bm{0})$ is the unique solution of (P'), 
        $\pi^{-1}\left(\pi( (P^\top P,\bm{0}) )\right) \cap \mathcal{K}_+$ is a singleton set.
        Since $\mathcal{K}_+$ is a closed line-free convex set and 
        $(P^\top P,\bm{0})$ is locally generic in $\text{ext}_{\binom{d+1}{2}}(\mathcal{K}_+)$ by Claim~\ref{claim:tensegrity}, 
        Proposition~\ref{prop:GT} can be applied. 
        Hence,    there exists a hyperplane $H=\{(X,s)\in \mathcal{K}(G): \langle (X,s), (L,t)\rangle=0\}$ defined by $(L,t)\in \mathcal{K}(G)$ 
        such that $\pi^{-1}(H)=\{(X,s)\in \mathcal{K}: \langle (X,s), (L,t)\rangle=0\}$ exposes $F_{\mathcal{K}_+}((P^{\top}P, {\bm 0}))$.
        Note that $F_{\mathcal{K}_+}((P^{\top}P, {\bm 0}))=F_{\mathcal{L}_+}(P^{\top}P)\times \{\bm 0\}$.
        This and Proposition~\ref{prop:realface} imply  
        \begin{equation} \label{eq:ten_proof2}
        L\succeq 0,\ \rank L=n-d-1,\  \langle P^\top P, L\rangle=0,
        \end{equation}
        and  
        \begin{equation}\label{eq:ten_proof3}
        t_{ij} > 0 \qquad (ij\in E_\pm).
        \end{equation}
        
        By $(L,t)\in \mathcal{K}(G)$, $(L,t)=\sum_{ij\in E(G)} \omega_{ij} (F_{ij}, \sigma(ij){\bm e}_{ij})$ for some $\omega:E(G)\rightarrow \mathbb{R}$.
        Hence $L=L_{G,\omega}$. By $PL=O$ from (\ref{eq:ten_proof2}), $\omega$ is an equilibrium stress of $(G,\sigma,p)$.
        Moreover, (\ref{eq:ten_proof3}) implies that $\sigma(ij)\omega_{ij}=t_{ij}>0$ for every $ij\in E_\pm$, that is, $\omega$ is strictly proper.
        Together with (\ref{eq:ten_proof2}), we conclude that $\omega$ satisfies the properties of the statement.
%
%
%
%
%
%
    \end{proof}

\section{Extension to Group-Symmetric Tensegrities} \label{sec:4}
    In this section, we extend Theorem~\ref{maintheoremtens} to tensegrities with finite point group symmetry.
    We use the following notations.
    Let $\Gamma$ be a finite group with the unit element $e_\Gamma$.
    The set of $n \times m$ real matrices is denoted as $\mathbb{R}^{n \times m}$.
    For a finite set $X$ with $|X|=m$, let $\mathbb{R}^{X \times X}$ be the set of $m \times m$ real matrices whose each row or column is indexed by each element in $X$.
    For $A\in \mathbb{R}^{n\times m}$, the $(i,j)$-th entry is denoted by $A[i,j]$.
    The identity matrix of size $n$ is denoted as $I_n$.
    The general linear group and the orthogonal group of $\mathbb{R}^n$ are denoted as $GL_n(\mathbb{R})$ and $O(\mathbb{R}^n)$, respectively.
    For two matrices $A \in \mathbb{R}^{k \times l}$ and $B \in \mathbb{R}^{k' \times l'}$, their matrix direct sum $A \oplus B \in \mathbb{R}^{(k+k')\times (l+l')}$ and
    their matrix tensor product $A \otimes B \in \mathbb{R}^{kk' \times ll'}$ are defined by
    \[
        A \oplus B =
        \begin{pmatrix}
        A & O \\
        O & B 
        \end{pmatrix} 
    \text{ and }
        A \otimes B =
        \begin{bmatrix} 
            A[1,1]B & \cdots & A[1,l]B \\
            \vdots & \ddots & \vdots \\
            A[k,1]B & \cdots & A[k,l]B
        \end{bmatrix},
    \]
    respectively. For subsets of matrices $\mathcal{F}_1 \subseteq \mathbb{R}^{k \times l}$ and $\mathcal{F}_2 \subseteq \mathbb{R}^{k' \times l'}$, 
    $\mathcal{F}_1 \oplus \mathcal{F}_2 \subseteq \mathbb{R}^{(k+k') \times (l+l')}$ and 
    $I_n \otimes \mathcal{F}_1 \subseteq \mathbb{R}^{nk \times nl}$ are defined by
    \[
        \mathcal{F}_1 \oplus \mathcal{F}_2 = \left\{ A\oplus B : A \in \mathcal{F}_1, B \in \mathcal{F}_2 \right\}
    \text{ and }
        I_n \otimes \mathcal{F}_1 = \{ I_n \otimes A : A \in \mathcal{F}_1 \},
    \]
    respectively.

\subsection{Main Result}
    We define basic notions regarding group-symmetric tensegrities and then state our main theorem.

    Let $\hat{V}$ be a finite set and $\Gamma$ be a finite group.
    A {\em$\Gamma$-gain graph} on $\hat{V}$ is a directed graph $(\hat{V}, \hat{E})$ in which each edge is labeled by an element in $\Gamma$. 
An edge from a vertex $u$ to a vertex $v$ with label $\gamma\in \Gamma$ is denoted by a triple $(u,v,\gamma)$,
and we will identify $(u,v,\gamma)$ with $(v,u,\gamma^{-1})$. 

More rigorously, a $\Gamma$-gain graph is defined as a pair $(\hat{V}, \hat{E})$ of a finite set $\hat{V}$ and a subset $\hat{E}$ of $(\hat{V}\times \hat{V}\times \Gamma)/\sim$, where $\sim$ is an equivalence relation on $\hat{V}\times \hat{V}\times \Gamma$ defined by
    \[
        (u,v,\gamma) \sim (u',v',\gamma') \Longleftrightarrow (u',v',\gamma') = (u,v,\gamma) \text{ or } (u',v',\gamma')=(v,u,\gamma^{-1}).
    \]
    The {\em lift} of a $\Gamma$-gain graph $\hat{G}=(\hat{V},\hat{E})$ is an undirected graph $G=(V,E)$ on $V:=\Gamma \times \hat{V}$ such that
        $\{(\alpha,u),(\beta,v)\}$ is an edge of $E$ if and only if  $(u,v,\alpha^{-1}\beta) \in \hat{E}.$ 
For simplicity, we often denote an edge $\{(\alpha,u),(\beta,v)\}$ of the lift by $(\alpha,u)(\beta,v)$.

An undirected graph $G=(V,E)$ is said to be a {\em $\Gamma$-symmetric graph} if it is the lift of some $\Gamma$-gain graph $\hat{G}=(\hat{V},\hat{E})$. 
Figure~\ref{fig:2} is an example of a $\mathbb{Z}_2$-symmetric graph.
 A {\em Cayley graph} is a group-symmetric graph with $|\hat{V}|=1$.
A $\Gamma$-symmetric graph can be the lift of more than one $\Gamma$-gain graph $\hat{G}$. 
In the subsequent discussion, it would be convenient to pick arbitrary one $\hat{G}$ to be the {\em quotient} of $G$ and denote it by $G/\Gamma$.
For $v \in V(G/\Gamma)$, a subset of vertices $\Gamma \times \{v\}\subseteq V(G)$ is called a {\em vertex orbit}.
    For $(u,v,\gamma) \in E(G/\Gamma)$, a subset of edges $\{ (\alpha,u)(\alpha\gamma,v) : \alpha \in \Gamma\} \subseteq E$ is called an {\em edge orbit}.
    \begin{figure}
        \centering
    \begin{tikzpicture}[]
        \node[draw,circle] (1) {$v_1$};
        \node[draw,circle,below left=0.75cm and 1.05cm of 1] (2) {$v_2$};
        \node[draw,circle,below right=1.05cm and 0.75cm of 2] (3) {$v_3$};
        \draw[->] (1) -- node[midway,left]{$+1$}  (2);
        \draw[->] (1) -- node[midway,right]{$+1$} (3);
        \draw[->] (2) -- node[midway,above]{$+1$} (3);
        \draw[->] (1) edge[loop right] node[midway,right]{$-1$} (1);
        \draw[->] (2)  to[bend right] node[midway,below]{$-1$}  (3);
        \draw[->] (3) edge[loop right] node[midway,right]{$-1$} (3);
    \end{tikzpicture}
    \begin{tikzpicture}[]
        \node[draw,fill=black,circle,inner sep=1pt,minimum size=2pt] (1) {};
        \node[draw,fill=black,circle,inner sep=1pt,minimum size=2pt,below left=1.0cm and 1.4cm of 1] (2) {};
        \node[draw,fill=black,circle,inner sep=1pt,minimum size=2pt,below right=1.4cm and 1.0cm of 2] (3) {};
        \node[draw,fill=black,circle,inner sep=1pt,minimum size=2pt,right =3.0cm of 3] (4) {};
        \node[draw,fill=black,circle,inner sep=1pt,minimum size=2pt,above right=1.4cm and 1.0cm of 4] (5) {};
        \node[draw,fill=black,circle,inner sep=1pt,minimum size=2pt,above left=1.0cm and 1.4cm of 5] (6) {};
        \node[above=0.1cm of 1] (a) {$(+1,v_1)$};
        \node[left=0.1cm of 2] (b) {$(+1,v_2)$};
        \node[below=0.1cm of 3] (c) {$(+1,v_3)$};
        \node[below=0.1cm of 4] (d) {$(-1,v_3)$};
        \node[right=0.1cm of 5] (e) {$(-1,v_2)$};
        \node[above=0.1cm of 6] (f) {$(-1,v_1)$};
        \foreach \u / \v in {1/2,2/3,3/4,4/5,5/6,6/1,1/3,2/4,3/5,4/6}
        \draw (\u) -- (\v);
    \end{tikzpicture}
    \caption{An example of $\mathbb{Z}_2$-gain graph with $\mathbb{Z}_2=\{-1,+1\}$ and its lift.}
    \label{fig:2}
\end{figure}

    Next we define a group-symmetric tensegrity.
    A group homomorphism $\theta:\Gamma \rightarrow O(\mathbb{R}^d)$ is called a {\em point group}.
    A $d$-dimensional point configuration $p:V(G) \rightarrow \mathbb{R}^d$ is {\em compatible} with a point group $\theta$ if the following relations are satisfied:
    \[
        \theta(\gamma) p_{(\alpha,v)} = p_{(\gamma \alpha,v)} \qquad (\gamma \in \Gamma, (\alpha,v) \in V(G) ).
    \]
    Let $\mathcal{C}_\theta(V(G))$ (or ${\cal C}_\theta(V)$) be the set of all $d$-dimensional point configurations $p:V(G) \rightarrow \mathbb{R}^d$ such that $\sum_{i \in V(G)}p_i=\bm{0}$ and $p$ is compatible with $\theta$.
    A $d$-dimensional tensegrity $(G,\sigma,p)$ is a {\em $\theta$-symmetric tensegrity} if $(V,E_0)$, $(V,E_+)$, $(V,E_-)$ 
    are $\Gamma$-symmetric graphs and $p$ is compatible with $\theta$.

With this definition, our goal is to extend Theorem~\ref{maintheoremtens} to $\theta$-symmetric tensegrities.
Note that a priori a group-symmetric tensegrity is not generic, and Theorem~\ref{maintheoremtens} cannot be applied. 
In order to extend Theorem~\ref{maintheoremtens}, we need to introduce genericity modulo symmetry, which is commonly used in the context of infinitesimal rigidity~\cite{MT14,BW17}.
Let $\mathbb{Q}_{\theta,\Gamma}$ be the finite extension field of $\mathbb{Q}$ generated by the entries of $\theta(\gamma)$ for all $\gamma \in \Gamma$  and those of (representative)
irreducible representations of $\Gamma$ (see Subsection~\ref{sec:4.4} and Subsection~\ref{sec:5.3} for the formal definition).
A $\theta$-symmetric tensegrity $(G,\sigma,p)$ is {\em generic modulo symmetry} if the translation of $p$ is generic over $\mathbb{Q}_{\theta,\Gamma}$ in ${\cal C}_\theta(V)$.
In other words, we choose a representative vertex from each vertex orbit of $G$, and $(G,\sigma,p)$  is said to be generic modulo symmetry if the set of coordinates of the points of representative vertices is algebraically independent over $\mathbb{Q}_{\theta,\Gamma}$. 
Note that, in the latter definition, as $\mathbb{Q}_{\theta,\Gamma}$ contains the entries of $\theta$, genericity modulo symmetry is independent of the choice of representative vertices. 

    Now we are in a position to state our main result, an extension of Theorem~\ref{maintheoremtens} to group-symmetric tensegrities.
    \begin{theorem} \label{thm:sym}
        Let $(G,\sigma,p)$ be a $d$-dimensional tensegrity with $n$ vertices which is $\theta$-symmetric and is generic modulo symmetry.
        Suppose also that $p(\overline{N}_G(i))$ affinely spans $\mathbb{R}^d$ for all $i \in V(G)$.
        Then, $(G,\sigma,p)$ is universally rigid if and only if it has a strictly proper equilibrium stress $\omega$ satisfying $L_{G,\omega} \succeq 0$ and $\rank L_{G,\omega}=n-d-1$.
    \end{theorem}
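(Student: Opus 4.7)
The sufficiency is immediate from the results already established: the affine-span hypothesis activates Theorem~\ref{thm:alfakih}, giving the conic condition for the edge directions, and then Theorem~\ref{thm:connelly} promotes a stress $\omega$ with the stated properties to a certificate of universal rigidity. I will focus on the necessity, where the strategy is to imitate the proof of Theorem~\ref{maintheoremtens} but to exploit the $\Gamma$-action via a block-diagonalization of the ambient SDP.

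Set up the slack-variable formulation (P') exactly as in the proof of Theorem~\ref{maintheoremtens}, so that universal rigidity makes $(P^\top P,\bm{0})$ its unique feasible solution. The group $\Gamma$ acts on $\mathcal{K}$ by permuting vertex and edge orbits combined with the point-group action $\theta$ on $\mathbb{R}^d$, and (P'), the cone $\mathcal{K}_+$, and the subspace $\mathcal{K}(G)$ are all $\Gamma$-equivariant. Since the unique solution $(P^\top P,\bm{0})$ lies in the $\Gamma$-invariant subspace $\mathcal{K}^\Gamma$ (because $p\in\mathcal{C}_\theta(V)$), an averaging argument shows that the restriction of (P') to $\mathcal{K}^\Gamma_+ := \mathcal{K}_+\cap\mathcal{K}^\Gamma$ still has $(P^\top P,\bm{0})$ as its unique solution, i.e.\ $\pi^{-1}(\pi(P^\top P,\bm{0}))\cap\mathcal{K}^\Gamma_+$ is a singleton for $\pi$ the projection onto $\mathcal{K}(G)\cap\mathcal{K}^\Gamma$.

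Now invoke the block-diagonalization coming from the real representation theory of $\Gamma$. The invariant PSD Laplacian cone $\mathcal{L}^V_+\cap\mathcal{K}^\Gamma$ is isomorphic, via a change of basis defined over $\mathbb{Q}_{\theta,\Gamma}$, to a direct sum $\bigoplus_\rho \bigl(I_{m_\rho}\otimes\mathcal{S}^{k_\rho}_+\bigr)$ indexed by the real irreducible representations $\rho$ of $\Gamma$, where $k_\rho$ absorbs whether $\rho$ has real, complex, or quaternionic Schur type, and the slacks $s$ split orbit-by-orbit. Within each block, apply Proposition~\ref{prop:GT} to the projected image of $(P^\top P,\bm{0})$: because $\mathbb{Q}_{\theta,\Gamma}$ contains the matrix entries used to perform the block-diagonalization, Proposition~\ref{prop inherit genericity} together with a block-wise analog of Claim~\ref{claim:tensegrity} shows that this image is locally generic in $\mathrm{ext}_{k_\rho'}$ of the appropriate small cone. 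Assembling the resulting block-wise exposing hyperplanes yields a $\Gamma$-invariant pair $(L,t)\in\mathcal{K}(G)\cap\mathcal{K}^\Gamma$ with $L\succeq 0$, $\rank L=n-d-1$, $\langle P^\top P,L\rangle=0$, and $t_{ij}>0$ for every $ij\in E_\pm$. Writing $L=L_{G,\omega}$ for $\omega$ constant on edge orbits delivers the required strictly proper equilibrium stress.

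The main obstacle lies in the block-wise application of Proposition~\ref{prop:GT}. Two delicate issues have to be handled: first, because the blocks have the form $I_{m_\rho}\otimes\mathcal{S}^{k_\rho}_+$ rather than a plain PSD cone, one must carefully track how $\rank L$ and the face dimension $\binom{\rank L+1}{2}$ decompose across blocks in order to identify the relevant extremality stratum $\mathrm{ext}_{k_\rho'}$ and justify that the block image of $(P^\top P,\bm{0})$ sits at the boundary of an explicit semi-algebraic set defined over $\mathbb{Q}_{\theta,\Gamma}$. Second, one must verify that the exposing hyperplane produced in $\mathcal{K}(G)\cap\mathcal{K}^\Gamma$ really exposes $F_{\mathcal{K}_+}((P^\top P,\bm{0}))$ in the full cone (and not only in its $\Gamma$-invariant slice), which is where a symmetrization/averaging argument combined with Proposition~\ref{prop:realface} is used to reconstruct a stress on $G$ itself.
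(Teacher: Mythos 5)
Your sufficiency argument is exactly the paper's, and your general plan (block-diagonalize the $\Gamma$-symmetric part of the SDP, use $\mathbb{Q}_{\theta,\Gamma}$ to keep the change of basis algebraically ``free,'' apply Proposition~\ref{prop:GT}, then read off $\omega$) is the right skeleton.  But there are two genuine problems in the necessity direction.

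First, you propose to apply Proposition~\ref{prop:GT} \emph{block by block}, i.e.\ separately in each summand $\mathcal{K}_\rho$ of the block-diagonalized cone.  That cannot work as stated.  Proposition~\ref{prop:GT} requires that $\pi^{-1}(\pi(x))\cap C$ be a singleton, and this hypothesis does not decompose across blocks: the linear forms defining the projection $\pi$ are the $\Psi$-images of the $F_{(u,v,\gamma)}$, and each $\Psi(F_{(u,v,\gamma)})$ has nonzero components in essentially every $\mathcal{K}_\rho$, so the constraint system couples the blocks.  Uniqueness of the full solution gives you no singleton statement in any single block's sub-SDP.  The paper applies Proposition~\ref{prop:GT} \emph{once}, to the entire cone $\mathcal{K}_{+,\Gamma}=\bigoplus_\rho I_{d_\rho/\dim\mathbb{F}}\otimes\mathcal{K}_{+,\rho}$ with the extremality level $k=\sum_{\rho}\binom{m_\rho+1}{2}$ (or the analogous sum with $m_\rho^2$ and $2m_\rho^2-m_\rho$ terms for the complex and quaternionic types).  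The block structure is used only afterwards, to decompose the single exposing hyperplane that Proposition~\ref{prop:GT} returns and to count ranks.  Relatedly, you write the decomposed cone as $\bigoplus_\rho I_{m_\rho}\otimes\mathcal{S}^{k_\rho}_+$; this conflates the two roles of multiplicity.  The identity-tensor copy count in the decomposition of $(\mathcal{L}^V)^\Gamma$ is the multiplicity of $\rho$ in the \emph{regular} representation, namely $d_\rho$ (or $d_\rho/\dim\mathbb{F}$ for non-real types), which depends only on $\Gamma$.  The multiplicity $m_\rho$ of $\rho$ in the \emph{point group} $\theta$ appears only as a rank bound on each block component, not as a copy count; e.g.\ $m_\rho$ can be $0$ while the block is still present.

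Second, the step you call ``a block-wise analog of Claim~\ref{claim:tensegrity}'' is actually the substantive new content and cannot be waved through.  One must show that the $\Psi$-image of $\mathcal{C}_\theta(V)$ (under $q\mapsto\Psi(Q^\top Q)$) is \emph{exactly} the low-rank stratum $\{X\in\mathcal{K}_{+,\Gamma}:\rank X_\rho\le\dim\mathbb{F}\cdot m_\rho\}$; only then does genericity of $p$ over $\mathbb{Q}_{\theta,\Gamma}$ transfer, via Proposition~\ref{prop inherit genericity}, to local genericity of $\Psi(P^\top P)$ in $\mathrm{ext}_k(\mathcal{K}_{+,\Gamma})$.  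This is the paper's Propositions~\ref{prop:range} and~\ref{prop:range2}, proved via an explicit Schur-orthogonality computation (Lemmas~\ref{lem:1}, \ref{lem:cal2}) giving $\Psi(P^\top P)_\rho=\sum_t W_{\rho,t}W_{\rho,t}^\top$.  Without the surjectivity onto the whole rank-$m_\rho$ stratum, the image $f(\mathcal{C}_\theta(V))$ could be a proper semi-algebraic subset of $\mathrm{ext}_k(\mathcal{K}_{+,\Gamma})$ near $\Psi(P^\top P)$, and genericity would not transfer.  On the other hand, your second ``delicate issue'' — whether the resulting hyperplane exposes the face in the \emph{full} cone $\mathcal{L}^V_+$ as opposed to the invariant slice — is not actually needed: once you have a $\Gamma$-symmetric $L=L_{G,\omega}$ with $L\succeq 0$, $\rank L=n-d-1$, $\langle P^\top P,L\rangle=0$ and strict properness, Connelly's Theorem~\ref{thm:connelly} applies directly, with no further reference to the facial structure of the unrestricted cone.
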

    
    The sufficiency of Theorem~\ref{thm:sym} immediately follows from Theorem~\ref{thm:connelly} and Theorem~\ref{thm:alfakih}.
The necessity of Theorem~\ref{thm:sym} holds without assuming the neighbor-general position of $p$ as follows.
    \begin{theorem} \label{thm:sym_nec}
        Let $(G,\sigma,p)$ be a $d$-dimensional tensegrity with $n\geq d+2$ vertices which is $\theta$-symmetric and is generic modulo symmetry.
        Suppose also that $p$ affinely spans $\mathbb{R}^d$.
        If $(G,\sigma,p)$ is universally rigid, then it has a strictly proper equilibrium stress $\omega$ satisfying $L_{G,\omega} \succeq 0$ and $\rank L_{G,\omega}=n-d-1$.
    \end{theorem}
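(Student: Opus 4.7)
The plan is to mimic the proof of Theorem~\ref{maintheoremtens} within the $\Gamma$-invariant subspace, combined with the block-diagonalization of $(\mathcal{L}^V)^\Gamma$ induced by the real irreducible representations of $\Gamma$. The natural $\Gamma$-action on $\mathcal{K}$ (simultaneous index permutation on $\mathcal{L}^V$ and edge-orbit permutation on $\mathbb{R}^{E_\pm}\times\{0\}^{E_0}$) fixes the cone $(\mathcal{K}_+)^\Gamma$, and because $p$ is compatible with $\theta$, the primal solution $(P^\top P,\bm{0})$ lies in $\mathcal{K}^\Gamma$. Averaging any feasible $(X,s)$ of (P') over $\Gamma$ produces another $\Gamma$-invariant feasible solution, which must coincide with $(P^\top P,\bm{0})$ by universal rigidity. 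Hence $(P^\top P,\bm{0})$ is the unique feasible point of the restriction of (P') to $(\mathcal{K}_+)^\Gamma$, and the projection $\pi:\mathcal{K}^\Gamma\to\mathcal{K}(G)^\Gamma$ satisfies $\pi^{-1}(\pi((P^\top P,\bm{0})))\cap(\mathcal{K}_+)^\Gamma=\{(P^\top P,\bm{0})\}$.

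The key algebraic tool is a simultaneous block-diagonalization: Schur's lemma over $\mathbb{R}$ provides an orthogonal change of basis $\Phi$ with entries in $\mathbb{Q}_{\theta,\Gamma}$ such that every $M\in(\mathcal{L}^V)^\Gamma$ is carried to a direct sum $\bigoplus_i (I_{m_i}\otimes A_i)$ indexed by the real irreducible representations of $\Gamma$ occurring in $\mathbb{R}^V$, where $A_i$ is a Hermitian matrix over the associated real division algebra $\mathbb{D}_i\in\{\mathbb{R},\mathbb{C},\mathbb{H}\}$. Under $\Phi$, the invariant PSD cone $(\mathcal{L}_+^V)^\Gamma$ becomes a product of PSD cones of such Hermitian matrices, yielding a block-diagonal analogue of Proposition~\ref{prop:realface}: the minimal face of an invariant matrix is determined by the block ranks $r_i=\rank A_i$, and it is exposed by an invariant PSD matrix satisfying block-wise rank complementarity.

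The principal technical obstacle, paralleling Claim~\ref{claim:tensegrity}, is to verify that $(P^\top P,\bm{0})$ is locally generic in $\text{ext}_k((\mathcal{K}_+)^\Gamma)$ for the appropriate $k$ (namely $\dim F_{(\mathcal{L}_+^V)^\Gamma}(P^\top P)$). Since $\Phi$ is defined over $\mathbb{Q}_{\theta,\Gamma}$ and the symmetric configuration space $\mathcal{C}_\theta(V)$ is semi-algebraic over $\mathbb{Q}_{\theta,\Gamma}$, the map $p\mapsto \Phi(P^\top P)\Phi^\top$ is algebraic over $\mathbb{Q}_{\theta,\Gamma}$. A version of Proposition~\ref{prop inherit genericity} over $\mathbb{Q}_{\theta,\Gamma}$ then transports the genericity of $p$ modulo symmetry to genericity of each block image $A_i$ in the corresponding Hermitian PSD cone, and by the lower semi-continuity of each block rank, the tuple $(r_i)_i$ is locally constant at $(P^\top P,\bm{0})$. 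Near this point, $\text{ext}_k((\mathcal{K}_+)^\Gamma)$ is cut out by the block-rank bounds together with $s=\bm{0}$, which yields the required local genericity.

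With this in hand, Proposition~\ref{prop:GT} applied to the closed line-free convex semi-algebraic cone $(\mathcal{K}_+)^\Gamma$ and the projection $\pi$ (both defined over $\mathbb{Q}_{\theta,\Gamma}$) produces a hyperplane of $\mathcal{K}(G)^\Gamma$ defined by some $\Gamma$-invariant $(L,t)\in\mathcal{K}(G)^\Gamma$ that exposes $F_{(\mathcal{K}_+)^\Gamma}((P^\top P,\bm{0}))$. Writing $(L,t)=\sum_{ij\in E(G)}\omega_{ij}(F_{ij},\sigma(ij)\bm{e}_{ij})$ gives a $\Gamma$-invariant edge weight $\omega$ with $L=L_{G,\omega}$, and the block-diagonal analogue of Proposition~\ref{prop:realface} implies $L_{G,\omega}\succeq 0$ and $\rank L_{G,\omega}=n-d-1$, while $t_{ij}>0$ on $E_\pm$ yields strict properness and $PL_{G,\omega}=O$ provides the equilibrium condition, completing the proof.
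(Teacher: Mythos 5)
Your overall strategy — running the slack-variable construction of Theorem~\ref{maintheoremtens} inside the $\Gamma$-invariant part of $\mathcal{K}$, block-diagonalizing $(\mathcal{L}^V)^\Gamma$ via real Schur theory into $\mathbb{R}/\mathbb{C}/\mathbb{H}$-Hermitian blocks, and feeding the result to Proposition~\ref{prop:GT} — is exactly the paper's route. However, the genericity-transfer step has a genuine gap.

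You write that Proposition~\ref{prop inherit genericity} ``transports the genericity of $p$ modulo symmetry to genericity of each block image $A_i$ in the corresponding Hermitian PSD cone.'' This is not what that proposition gives, and as stated it is false: $A_\rho=\Psi(P^\top P)_\rho$ has rank at most $\dim\mathbb{F}\cdot m_\rho$, which is strictly less than full, so $A_\rho$ cannot be generic in the ambient PSD cone $\mathcal{K}_{+,\rho}$. What Proposition~\ref{prop inherit genericity} actually yields is that $\Psi(P^\top P)$ is generic in the \emph{image} $f(\mathcal{C}_\theta(V))$ of the algebraic map $f:q\mapsto\Psi(Q^\top Q)$. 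To conclude local genericity in $\text{ext}_k(\mathcal{K}_{+,\Gamma})$, one must still show that, near $\Psi(P^\top P)$, $\text{ext}_k(\mathcal{K}_{+,\Gamma})$ is contained in $f(\mathcal{C}_\theta(V))$ --- i.e., that any $\Gamma$-symmetric PSD Laplacian whose $\rho$-block has rank at most $\dim\mathbb{F}\cdot m_\rho$ arises as $\Psi(Q^\top Q)$ for some $\theta$-compatible configuration $q$. This surjectivity onto the rank-bounded set is the content of Propositions~\ref{prop:range} and \ref{prop:range2}, proved via an explicit computation of $\Psi(P^\top P)_\rho$ (Lemmas~\ref{lem:1} and \ref{lem:cal2}), Schur orthogonality, and a construction of a preimage $\tilde Q$ (Claim~\ref{claim:4.4}). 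Your sketch neither states nor proves this inclusion; ``the block ranks are locally constant'' addresses lower semi-continuity but does not give the reverse containment. This is the main technical ingredient of the paper beyond the bar-joint Gortler--Thurston template, so omitting it leaves the local-genericity claim unsupported.

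A minor remark: you also need to confirm that $\rank\Psi(P^\top P)_\rho$ equals exactly $\dim\mathbb{F}\cdot m_\rho$ (not just that it is $\leq$), which the paper extracts from $\rank P^\top P=d$ together with $\sum_\rho d_\rho m_\rho=d$ and the rank bound from Proposition~\ref{prop:range}; your proposal assumes this implicitly via ``locally constant tuple $(r_i)$'' without fixing the values.
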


The subsequent discussion is devoted to the proof of Theorem~\ref{thm:sym_nec}.
In order to simplify the description, we shall focus on bar-joint frameworks $(G,p)$ rather than tensegrities $(G,\sigma,p)$. 
For general tensegrities the proof easily follows by combining the proof for the bar-joint case with that of Theorem~\ref{maintheoremtens}.
    
    A high-level idea of the proof is as follows.
    Proposition~\ref{prop:uniqueness} states that, if $(G,p)$ is universally rigid, then the Gram matrix $P^\top P$ of  $p$ is the unique feasible solution of the SDP problem (P).
    To use Gortler-Thurston's argument (Proposition~\ref{prop:GT}), the unique solution must be generic in a certain sense.
    When $(G,p)$ is group-symmetric, however, genericity in this sense does not hold in the cone of positive semidefinite Laplacian matrices $\mathcal{L}^V_+$.
    To make the unique solution generic, we restrict the ambient space to $\Gamma$-symmetric Laplacian matrices.
    To investigate the facial structure of the restricted cone, we block-diagonalize the ambient matrix space by using structure theorem (Proposition~\ref{prop:str}).
    If the original point configuration $p$ is generic modulo symmetry, the image of $P^\top P$ by this transformation is generic.
    Therefore Gortler-Thurston's argument can be applied.

In order to explain how to implement this idea, we shall first give the proof of Theorem~\ref{thm:sym_nec} for the case when every real irreducible representation of $\Gamma$ is  absolutely irreducible (see Subsection~\ref{sec:4.3} for the definition).
The proof for the general case follows the same idea but is technically more involved. We will give it in Section~\ref{sec:5}.

\subsection{Restriction of (P) to the Space of Group-Symmetric Laplacians}
In this subsection, we define the space of $\Gamma$-symmetric Laplacian matrices as a subspace of Laplacian matrices. 

Let $V$ be a finite set, and let $K_V$ be the complete graph on $V$.
If we can decompose $V$ into $V=\Gamma\times (V/\Gamma)$ for some finite set $V/\Gamma$, then 
$K_V$ is $\Gamma$-symmetric, 
where its quotient $K_V/\Gamma$ consists of the vertex set $V/\Gamma$ 
and the edge set  $((V/\Gamma) \times (V/\Gamma) \times \Gamma \setminus \{ (v,v,e_\Gamma) : v \in V/\Gamma \} )/\sim$. 
(Note that, since $K_V$ has no loop, its quotient has no loop with the identity label.)
The quotient $K_V/\Gamma$ is called the {\em complete $\Gamma$-gain graph}. 
In the subsequent discussion, each $\Gamma$-symmetric graph $G=(V,E)$ and its quotient $G/\Gamma$ are assumed to be subgraphs of $K_V$ and $K_V/\Gamma$, respectively.

    For a $\Gamma$-symmetric graph $G=(V,E)$, an edge weight $\omega:E(G) \rightarrow \mathbb{R}$ is {\em $\Gamma$-symmetric} if $\omega$ is constant on each edge orbit.
A Laplacian matrix is said to be {\em $\Gamma$-symmetric} if 
it is the Laplacian of $K_V$ weighted by  a $\Gamma$-symmetric edge weight $\omega$. 
Equivalently, $L \in {\cal L}^V$ is $\Gamma$-symmetric if and only if 
\begin{equation}\label{eq:Lsymmetric}
L[(\alpha,u),(\beta,v)]=L[(\gamma\alpha,u),(\gamma\beta,v)]
\end{equation}
    for any $(\alpha,u), (\beta,v) \in V$ and any $\gamma \in \Gamma$.
    Let $({\cal L}^V)^\Gamma$ be the set of all $\Gamma$-symmetric Laplacian matrices.
    Then $({\cal L}^V)^\Gamma$ is a linear subspace of $\mathcal{S}^V$ given by
    \[
        (\mathcal{L}^V)^\Gamma = {\rm span} \left\{ F_{(u,v,\gamma)} : (u,v,\gamma) \in E(K_V/\Gamma) \right\}
    \]
    where
    \[
        F_{(u,v,\gamma)} = \sum_{\alpha \in \Gamma} F_{(\alpha,u)(\alpha\gamma,v)}.
    \]
    Let $({\cal L}^V)^\Gamma_+$ be the set of positive semidefinite $\Gamma$-symmetric Laplacian matrices.

Let $\theta:\Gamma\rightarrow O(\mathbb{R}^d)$ be a point group. For a given $\theta$-symmetric framework $(G,p)$, we consider the following SDP problem:
    \[
        \begin{array}{llll}
        \text{(P$^\Gamma$)} &  \text{max.}  & 0 \\
                & \text{s.t.}  & \langle X, F_{(u,v,\gamma)} \rangle   =  \langle P^\top P,F_{(u,v,\gamma)} \rangle &((u,v,\gamma) \in E(G/\Gamma)) \\
                &              & X \in (\mathcal{L}^V)^\Gamma_+.
        \end{array}
    \] 
    The cone of (P$^\Gamma$) is restricted to $(\mathcal{L}^V)^\Gamma_+$. 
    We have the following.
    
    \begin{prop} \label{prop:1}
        If a framework $(G,p)$ with $p \in {\cal C}_\theta(V)$ is universally rigid and $\theta$-symmetric for some point group $\theta:\Gamma\rightarrow O(\mathbb{R}^d)$, then (P$^\Gamma$) has the unique solution $P^\top P$.
    \end{prop}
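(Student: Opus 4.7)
The plan is to reduce feasibility of (P$^\Gamma$) to feasibility of the original SDP (P) and then invoke Proposition~\ref{prop:uniqueness}. First I would check that $P^\top P$ itself is feasible for (P$^\Gamma$). The right-hand sides of the equality constraints equal $\langle P^\top P, F_{(u,v,\gamma)}\rangle$ by definition, so nothing needs to be verified there. For membership in $(\mathcal{L}^V)^\Gamma_+$, the configuration satisfies $P\bm{1}_V = \bm 0$ (since $p \in \mathcal{C}_\theta(V)$), giving $P^\top P \in \mathcal{L}^V_+$, and the compatibility $\theta(\gamma) p_{(\alpha,v)} = p_{(\gamma\alpha,v)}$ together with orthogonality of $\theta(\gamma)$ yields
\[
(P^\top P)[(\gamma\alpha,u),(\gamma\beta,v)] = p_{(\alpha,u)}^\top \theta(\gamma)^\top \theta(\gamma)\, p_{(\beta,v)} = (P^\top P)[(\alpha,u),(\beta,v)],
\]
which is exactly the $\Gamma$-symmetry condition (\ref{eq:Lsymmetric}).

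Next, for an arbitrary feasible $X$ of (P$^\Gamma$), I would use the $\Gamma$-symmetry of $X$ to show that the aggregated constraints indexed by edge orbits actually force the per-edge constraints of (P). Expanding
\[
\langle X, F_{(\alpha,u)(\alpha\gamma,v)}\rangle = X[(\alpha,u),(\alpha,u)] + X[(\alpha\gamma,v),(\alpha\gamma,v)] - 2X[(\alpha,u),(\alpha\gamma,v)],
\]
applying (\ref{eq:Lsymmetric}) with the transformation $\beta\alpha^{-1}$ shows that this quantity is independent of $\alpha$; the same is true for $P^\top P$, which is also $\Gamma$-symmetric. Hence
\[
|\Gamma| \cdot \langle X, F_{(\alpha,u)(\alpha\gamma,v)}\rangle = \langle X, F_{(u,v,\gamma)}\rangle = \langle P^\top P, F_{(u,v,\gamma)}\rangle = |\Gamma| \cdot \langle P^\top P, F_{(\alpha,u)(\alpha\gamma,v)}\rangle
\]
for every representative edge, and therefore $\langle X, F_{ij}\rangle = \|p_i - p_j\|^2$ for every $ij \in E(G)$.

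Since $(\mathcal{L}^V)^\Gamma_+ \subseteq \mathcal{L}^V_+$, the matrix $X$ is then feasible for (P). Universal rigidity of $(G,p)$ and Proposition~\ref{prop:uniqueness} give $X = P^\top P$, completing the proof. There is no real obstacle here: the argument is pure bookkeeping that confirms (P$^\Gamma$) is genuinely a restriction of (P) rather than a strict relaxation in any direction. The substantive work of the paper lies in the converse direction — extracting structural consequences (existence of a strictly proper equilibrium stress of the right rank) from the unique solvability of the symmetrized SDP — which is the content of the subsequent sections.
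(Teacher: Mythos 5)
Your proof is correct and follows essentially the same route as the paper: verify $\Gamma$-symmetry of $P^\top P$ via compatibility and orthogonality of $\theta$, observe that for any $\Gamma$-symmetric $X$ the quantity $\langle X, F_{(\alpha,u)(\alpha\gamma,v)}\rangle$ is independent of $\alpha$ so that the orbit-averaged constraints of (P$^\Gamma$) coincide with the per-edge constraints of (P), and then invoke Proposition~\ref{prop:uniqueness}. Your version is slightly more explicit than the paper's in spelling out why $\langle X, F_{(\alpha,u)(\alpha\gamma,v)}\rangle$ does not depend on $\alpha$ (via the entry-wise relation~(\ref{eq:Lsymmetric})), but the underlying reduction is identical.
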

    \begin{proof}
    We first check that  the Gram matrix $P^\top P \in \mathcal{L}^V_+$ of $p$ is $\Gamma$-symmetric by checking (\ref{eq:Lsymmetric}) as follows:
    \begin{align*}
        (P^\top P)[(\gamma\alpha,u)(\gamma\beta,v)] &= {p_{(\gamma\alpha,u)}}^\top p_{(\gamma\beta,v)}  
        = {p_{(\alpha,u)}}^\top \theta(\gamma)^\top \theta(\gamma) p_{(\beta,v)} \\
        &= {p_{(\alpha,u)}}^\top p_{(\beta,v)} = (P^\top P)[(\alpha,u),(\beta,v)],
    \end{align*}
where the second equation follows from $\theta$-symmetry ans the third equation follows from $\theta(\gamma) \in O(\mathbb{R}^d)$.
    Hence $P^\top P \in (\mathcal{L}^V)^\Gamma_+$, and $P^\top P$ is feasible. 
    
To see the uniqueness, consider any $X \in ({\cal L}^V)^\Gamma$. 
Then $X$ is a linear combination of $F_{(u,v,\gamma)}$ over $(u,v,\gamma) \in E(K_V/\Gamma)$ while each $F_{(u,v,\gamma)}$ is the sum of 
$F_{(\alpha,u),(\alpha\gamma,v)}$ over $\alpha\in \Gamma$.
Hence we have \begin{equation}\label{eq:prop1}
\langle X,F_{(u,v,\gamma)} \rangle = |\Gamma| \langle X,F_{(\alpha,u)(\alpha\gamma,v)} \rangle
\end{equation} 
for any $(u,v,\gamma) \in E(G/\Gamma)$ and  $\alpha \in \Gamma$.
 
Since (\ref{eq:prop1}) holds for $P^\top P$ by  $P^\top P\in (\mathcal{L}^V)^\Gamma_+$,
$\langle X, F_{(u,v,\gamma)} \rangle   =  \langle P^\top P,F_{(u,v,\gamma)} \rangle$ for all $(u,v,\gamma) \in E(G/\Gamma)$ if and only if 
$\langle X, F_{(\alpha,u)(\alpha\gamma,v)} \rangle   =  \langle P^\top P,F_{(\alpha,u)(\alpha\gamma,v)} \rangle$ for all $(\alpha,u)(\alpha\gamma,v) \in E(G)$. 
In other words,  $X \in (\mathcal{L}^V)^\Gamma_+$ is feasible in (P$^\Gamma$)  if and only if $X$ is feasible in (P) (for $(G,p)$).
Hence the uniqueness of the feasible solution of (P$^\Gamma$) follows by applying Proposition~\ref{prop:uniqueness} to (P).
    \end{proof}    
\subsection{Block-diagonalization of (P$^\Gamma$)} \label{sec:4.3}
    To investigate the facial structure of $(\mathcal{L}^V)^\Gamma_+$, we use the structure theorem (Proposition~\ref{prop:str}) given below.

    Let $W$ be a finite dimensional real or complex vector space. Let $GL(W)$ be a general linear group of $W$.
    A {\em representation} of $\Gamma$ is a group homomorphism $\rho:\Gamma \rightarrow GL(W)$.
    $d_\rho:=\dim W$ is called the {\em degree} of $\rho$.
    Two representations $\rho:\Gamma \rightarrow GL(W)$ and $\rho':\Gamma \rightarrow GL(W')$ are {\em equivalent} if there is an isomorphism $T:W \rightarrow W'$
    satisfying $T \circ \rho(\gamma) =\rho'(\gamma) \circ T$ for all $\gamma \in \Gamma$.
    For a representation $\rho:\Gamma \rightarrow GL(W)$, a linear subspace $W'$ of $W$ is {\em $\Gamma$-invariant} if $\rho(\gamma) (W') \subseteq W'$ 
    for all $\gamma \in \Gamma$. A representation $\rho:\Gamma \rightarrow GL(W)$ is {\em irreducible} if the only $\Gamma$-invariant subspaces are $\{0\}$ and $W$.
The simplest example is the trivial representation, where $W$ is one-dimensional and every element is associated with the identity.
The trivial representation is denoted by $\tri$.

A real irreducible representation $\rho:\Gamma \rightarrow GL(W)$ is called {\em absolutely irreducible} if $\rho$ is also irreducible over $\mathbb{C}$. We also call $\Gamma$ {\em absolutely irreducible}
if every real irreducible representation of $\Gamma$ is absolutely irreducible. This is equivalent to saying that every complex irreducible representation is equivalent to a real representation. 
For example, dihedral groups and symmetric groups are absolutely irreducible while cyclic groups of order more than two are not.

In this section we shall focus on the case when $\Gamma$ is absolutely irreducible. This special case is sufficiently general to explain our technical idea, and moreover it includes finite groups that appear in applications.
Hence, throughout this section,  a real representation is simply called a representation.

    Let $\tilde{\Gamma}$ be the set of all equivalence classes of irreducible representations of $\Gamma$.
    By fixing a representative of each class, each element of $\tilde{\Gamma}$ is regarded as a representation. 
    Since every representation is equivalent to an orthogonal representation, we further assume that each $\rho \in \tilde{\Gamma}$ is an orthogonal matrix representation
    $\rho:\Gamma \rightarrow O(\mathbb{R}^{d_\rho})$.

The following is a basic relation on the degree of complex irreducible representations and it is also valid for real irreducible representations since $\Gamma$ is absolutely irreducible: 
    \begin{equation} \label{eq:d1}
        \sum_{\rho \in \tilde{\Gamma}} {d_\rho}^2 = |\Gamma|.
    \end{equation}

    We can now state the structure theorem. 
    \begin{prop} \label{prop:str}
Let $\Gamma$ be a finite group, $V/\Gamma$ be a finite set with $\hat{n}$ elements, and $V=\Gamma\times V/\Gamma$.  Suppose that $\Gamma$ is absolutely irreducible. Then there exists an orthogonal transformation $\Psi:\mathbb{R}^{V\times V} \rightarrow \mathbb{R}^{V\times V}$ satisfying
        \[
        \Psi((\mathcal{L}^V)^\Gamma)= \mathcal{L}^{V/\Gamma} \oplus \
        \bigoplus_{\rho \in \tilde{\Gamma} \setminus \{\tri\}} \left( I_{d_\rho}\otimes \mathcal{S}^{d_\rho \hat{n}} \right).
        \]
    \end{prop}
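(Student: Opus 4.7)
The plan is to realize $(\mathcal{L}^V)^\Gamma$ as the symmetric part of the commutant of a suitable orthogonal action of $\Gamma$ on $\mathbb{R}^V$, block-diagonalize it via the isotypic decomposition, and only at the very end impose the Laplacian constraint. Identify $\mathbb{R}^V = \mathbb{R}^\Gamma \otimes \mathbb{R}^{V/\Gamma}$ by $\bm{e}_{(\alpha,v)} \leftrightarrow \bm{e}_\alpha \otimes \bm{e}_v$, and define the orthogonal representation $\pi:\Gamma \to O(\mathbb{R}^V)$ by $\pi(\gamma) = \lambda(\gamma) \otimes I_{\hat{n}}$, where $\lambda$ is the left regular representation of $\Gamma$. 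A direct computation with the permutation action shows that condition (\ref{eq:Lsymmetric}) is equivalent to $\pi(\gamma) L = L \pi(\gamma)$ for every $\gamma \in \Gamma$; hence the space $(\mathcal{S}^V)^\Gamma$ of $\Gamma$-symmetric symmetric matrices is exactly the symmetric part of the commutant of $\pi$.

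Since $\Gamma$ is absolutely irreducible, the regular representation decomposes as $\lambda \cong \bigoplus_{\rho \in \tilde\Gamma} d_\rho\,\rho$ (whose dimensions sum to $|\Gamma|$ by (\ref{eq:d1})), so $\pi \cong \bigoplus_{\rho} (d_\rho \hat{n})\,\rho$. Applying the standard isotypic projectors $\tfrac{d_\rho}{|\Gamma|}\sum_{\gamma} \chi_\rho(\gamma^{-1})\,\pi(\gamma)$ and choosing an orthonormal symmetry-adapted basis inside each isotypic component yields an orthogonal matrix $U \in O(\mathbb{R}^V)$ with
\[
U\,\pi(\gamma)\,U^\top \;=\; \bigoplus_{\rho \in \tilde\Gamma} \bigl(\rho(\gamma) \otimes I_{d_\rho \hat{n}}\bigr) \qquad (\gamma \in \Gamma).
\]
Define $\Psi(M) = U M U^\top$; this is orthogonal with respect to the Frobenius inner product. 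By Schur's lemma together with absolute irreducibility in the form $\End_\Gamma(\rho) = \mathbb{R}\cdot I_{d_\rho}$, the commutant in the $\rho$-block equals $I_{d_\rho} \otimes \mathbb{R}^{(d_\rho \hat{n}) \times (d_\rho \hat{n})}$. Restricting to symmetric matrices and summing over $\rho$ gives $\Psi\bigl((\mathcal{S}^V)^\Gamma\bigr) = \bigoplus_{\rho \in \tilde\Gamma} I_{d_\rho} \otimes \mathcal{S}^{d_\rho \hat{n}}$.

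It remains to translate the Laplacian condition $L\bm{1}_V = \bm{0}$. The vector $\bm{1}_V = \bm{1}_\Gamma \otimes \bm{1}_{\hat{V}}$ is $\pi$-invariant and so lies entirely in the trivial isotypic component, which has dimension $\hat{n}$ (since $d_{\tri}=1$). With appropriate normalization of the symmetry-adapted basis, this component is canonically identified with $\mathbb{R}^{V/\Gamma}$ in such a way that $\bm{1}_V$ corresponds to a nonzero scalar multiple of $\bm{1}_{V/\Gamma}$. Under this identification the trivial block $I_1 \otimes \mathcal{S}^{\hat{n}} = \mathcal{S}^{V/\Gamma}$ is forced by $L\bm{1}_V = \bm{0}$ to annihilate $\bm{1}_{V/\Gamma}$, i.e.\ to lie in $\mathcal{L}^{V/\Gamma}$. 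For each $\rho \neq \tri$, the vector $\bm{1}_V$ has zero component in the $\rho$-block, so the Laplacian condition imposes no constraint there. Combining these observations with the previous paragraph yields the claimed decomposition.

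The main technical point is the Schur-lemma step, which relies crucially on $\End_\Gamma(\rho) = \mathbb{R}\cdot I_{d_\rho}$; this is precisely the hypothesis of absolute irreducibility. When $\Gamma$ fails to be absolutely irreducible, $\End_\Gamma(\rho)$ can be $\mathbb{C}$ or $\mathbb{H}$, producing additional internal structure inside each isotypic block that must be carried through the block-diagonalization — overcoming this is exactly the content of Section~\ref{sec:5}.
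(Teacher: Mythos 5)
Your proof is correct and uses essentially the same strategy as the paper: decompose the regular representation of $\Gamma$ on $\mathbb{R}^\Gamma$ into irreducibles, conjugate by the resulting orthogonal change of basis (tensored with $I_{\hat n}$), and observe that the Laplacian condition $L\bm{1}_V=\bm{0}$ only constrains the trivial isotypic block. The only cosmetic difference is framing: you characterize $(\mathcal{S}^V)^\Gamma$ as the symmetric part of the commutant of $\lambda(\gamma)\otimes I_{\hat{n}}$ and invoke Schur's lemma, whereas the paper characterizes it as $\mathcal{S}^V \cap \bigl({\rm span}\{R(\gamma)\}\otimes \mathbb{R}^{V/\Gamma\times V/\Gamma}\bigr)$; these are the same space since the commutant of the left regular representation is exactly the span of the right regular representation.
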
 
    The proof is given in Subsection~\ref{sec:4.5}.

Following Proposition~\ref{prop:str}, for each irreducible representation $\rho \in \tilde{\Gamma}$, we define a linear space $\mathcal{K}_\rho$ and a convex cone $\mathcal{K}_{+,\rho}$ by
    \[
    \begin{array}{ll}
        \mathcal{K}_\rho := 
        \begin{cases}
            \mathcal{L}^{V/\Gamma} & (\rho = \tri) \\
            \mathcal{S}^{d_\rho \hat{n}} & (\rho \in \tilde{\Gamma} \setminus \{\tri\})
        \end{cases}
        \text{ and }
        \mathcal{K}_{+,\rho} := 
        \begin{cases}
            \mathcal{L}^{V/\Gamma}_+ & (\rho = \tri) \\
            \mathcal{S}^{d_\rho \hat{n}}_+ & (\rho \in \tilde{\Gamma} \setminus \{\tri\})
        \end{cases},
    \end{array}
    \]
    respectively. The ambient space $\mathcal{K}_\Gamma$ and the convex cone $\mathcal{K}_{+,\Gamma}$ are defined by
    \[
        \mathcal{K}_\Gamma=  \bigoplus_{\rho \in \tilde{\Gamma}} I_{d_\rho} \otimes  \mathcal{K}_\rho 
        \text{ and }
        \mathcal{K}_{+,\Gamma} =  \bigoplus_{\rho \in \tilde{\Gamma}} I_{d_\rho} \otimes  \mathcal{K}_{+,\rho},
    \]
respectively. Note that $\mathcal{K}_\Gamma= \Psi((\mathcal{L}^V)^\Gamma)$ by Proposition~\ref{prop:str}, and $\mathcal{K}_{+,\Gamma} = \Psi((\mathcal{L}^V)^\Gamma_+)$ also holds since an orthogonal transformation preserves positive semidefiniteness.

    For each $X \in {\cal K}_\Gamma$ and $\rho \in \tilde{\Gamma}$, the projected image of $X$ to ${\cal K}_\rho$ is denoted by $X_\rho$, i.e.,  
    $X= \bigoplus_{\rho \in \tilde{\Gamma}} I_{d_\rho} \otimes X_\rho$.

    Using the orthogonal transformation $\Psi$ in Proposition~\ref{prop:str}, the SDP problem (P$^\Gamma$) is transformed into the following block-diagonal form:
    \[
        \begin{array}{llll}
        \text{(P$^\Psi$)} &  \text{max.}  & 0 \\
                & \text{s.t.}  & \langle X, \Psi(F_{(u,v,\gamma)}) \rangle   =  \langle \Psi(P^\top P), \Psi(F_{(u,v,\gamma)}) \rangle &((u,v,\gamma) \in E(G/\Gamma)) \\
                &              & X \in {\cal K}_{+,\Gamma}.
        \end{array}
    \]
    Since $\Psi$ is an orthogonal transformation, Proposition~\ref{prop:1} immediately gives the following.
    \begin{prop} \label{prop:2}
        If a framework $(G,p)$ with $p \in {\cal C}_\theta(V)$ is universally rigid and $\theta$-symmetric with some point group $\theta:\Gamma\rightarrow O(\mathbb{R}^d)$, then (P$^\Psi$) has the unique solution $\Psi(P^\top P)$.
    \end{prop}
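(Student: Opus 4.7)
The plan is to derive Proposition~\ref{prop:2} as a direct consequence of Proposition~\ref{prop:1} by transporting the uniqueness statement through the orthogonal change of variables $\Psi$ provided by Proposition~\ref{prop:str}. Since all the structural work (block-diagonalizing the ambient space) has already been done by $\Psi$, the remaining task is to verify that $\Psi$ induces a bijection between the feasible regions of (P$^\Gamma$) and (P$^\Psi$), and that it maps $P^{\top}P$ to $\Psi(P^{\top}P)$.

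First I would recall from the proof of Proposition~\ref{prop:1} that $P^{\top}P$ lies in $(\mathcal{L}^V)^\Gamma_+$, so that $\Psi(P^{\top}P)$ is a well-defined element of $\mathcal{K}_\Gamma$. By Proposition~\ref{prop:str} we have $\Psi((\mathcal{L}^V)^\Gamma) = \mathcal{K}_\Gamma$ and, since an orthogonal transformation preserves positive semidefiniteness, also $\Psi((\mathcal{L}^V)^\Gamma_+) = \mathcal{K}_{+,\Gamma}$. Hence $\Psi$ restricts to a bijection from the cone of $\Gamma$-symmetric positive semidefinite Laplacians onto $\mathcal{K}_{+,\Gamma}$, and in particular $\Psi(P^{\top}P) \in \mathcal{K}_{+,\Gamma}$.

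Next, I would use orthogonality of $\Psi$ to match the affine constraints. For any $Y \in (\mathcal{L}^V)^\Gamma$ and any edge $(u,v,\gamma) \in E(G/\Gamma)$, the identity $\langle \Psi(Y), \Psi(F_{(u,v,\gamma)}) \rangle = \langle Y, F_{(u,v,\gamma)} \rangle$ shows that $Y$ satisfies the $(u,v,\gamma)$-th constraint of (P$^\Gamma$) if and only if $\Psi(Y)$ satisfies the corresponding constraint of (P$^\Psi$). Combined with the previous paragraph, this gives a bijection $Y \mapsto \Psi(Y)$ between the feasible set of (P$^\Gamma$) and that of (P$^\Psi$), under which $P^{\top}P$ corresponds to $\Psi(P^{\top}P)$.

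By Proposition~\ref{prop:1}, the hypothesis that $(G,p)$ is universally rigid and $\theta$-symmetric forces $P^{\top}P$ to be the unique feasible solution of (P$^\Gamma$); transporting this along the above bijection, $\Psi(P^{\top}P)$ is the unique feasible solution of (P$^\Psi$). There is no real obstacle here, since the entire content of the statement is the change-of-variables lemma; the substantive work has already been done in Propositions~\ref{prop:1} and~\ref{prop:str}, and what remains is merely a compatibility check between the inner products, the constraints, and the cones under the orthogonal map $\Psi$.
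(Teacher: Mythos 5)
Your proof is correct and follows exactly the same route the paper takes: the paper simply asserts that Proposition~\ref{prop:1} transfers to (P$^\Psi$) ``since $\Psi$ is an orthogonal transformation,'' and you have spelled out why---$\Psi$ preserves inner products (hence the affine constraints) and, via Proposition~\ref{prop:str}, maps $(\mathcal{L}^V)^\Gamma_+$ bijectively onto $\mathcal{K}_{+,\Gamma}$, so uniqueness transports along $Y\mapsto\Psi(Y)$.
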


\subsection{Proof of Theorem~\ref{thm:sym_nec} for absolutely irreducible $\Gamma$} \label{sec:4.4}


Let $\theta:\Gamma\rightarrow O(\mathbb{R}^d)$ be a finite point group.
We take  an orthogonal matrix $Z_\Gamma \in O(\mathbb{R}^V)$ representing $\Psi$, i.e., $\Psi(X)=Z_\Gamma^\top X Z_\Gamma$ for any $X\in \mathbb{R}^{V\times V}$.

We define the finite extension field $\mathbb{Q}_{\theta,\Gamma}$ of $\mathbb{Q}$ generated by the entries of $\theta(\gamma)$ for all $\gamma \in \Gamma$ and the entries of $Z_\Gamma$\footnote{Although the details are ommited here, one can explicitly describes the entries of $Z_\Gamma$ in terms of those of $\rho(\gamma)$ for $\rho \in \tilde{\Gamma}$.}.
As in \cite[Section 5.5]{GT}, genericity and local genericity over $\mathbb{Q}_{\theta,\Gamma}$ are defined similarly and 
    Proposition~\ref{prop inherit genericity} and Proposition~\ref{prop:GT} can be generalized by changing the coefficient field to $\mathbb{Q}_{\theta,\Gamma}$. 

   Our strategy is to apply Proposition~\ref{prop:GT} over ${\cal K}_{+,\Gamma}$, and to do that we need to show that $\Psi(P^{\top}P)$ is generic over $\mathbb{Q}_{\theta,\Gamma}$ in the set of ``low rank'' matrices in ${\cal K}_{+,\Gamma}$. This can be done by showing that any ``low rank'' matrix in ${\cal K}_{+,\Gamma}$ is the $\Psi$-image of the Gram matrix of some $\theta$-symmetric point configuration in ${\cal C}_\theta(V)$. Since $p$ is generic in ${\cal C}_\theta(V)$, this would imply that $\Psi(P^{\top}P)$ is generic in the set of ``low rank'' matrices in ${\cal K}_{+,\Gamma}$.

A precise statement we want to show  is given as follows.
    \begin{prop} \label{prop:range}
Let $\theta:\Gamma\rightarrow O(\mathbb{R}^d)$ be a point group,
and let $m_\rho$ be the multiplicity of an irreducible representation $\rho \in \tilde{\Gamma}$ in $\theta$. 
Also, define a map $f$ by 
\[
f:\mathcal{C}_\theta(V)\ni q  \mapsto  \Psi(Q^\top Q)\in \mathcal{K}_{+,\Gamma},
\]
where $Q^\top Q$ is the Gram matrix of $q$. 
If $\Gamma$ is absolutely irreducible, then 
        \[
            f(\mathcal{C}_\theta(V)) = \left\{ X \in \mathcal{K}_{+,\Gamma} : \rank X_\rho \leq m_\rho ~ (\rho \in \tilde{\Gamma}) \right\}.
        \]
    \end{prop}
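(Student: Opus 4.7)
The strategy is to identify the matrix $Q$ associated with a $\theta$-compatible configuration $q$ with a $\Gamma$-equivariant linear map $\mathbb{R}^V \to \mathbb{R}^d$ and to decompose both the domain and codomain into isotypic components. The compatibility condition $\theta(\gamma)q_{(\alpha,v)} = q_{(\gamma\alpha,v)}$ is equivalent to the equivariance $Q\pi(\gamma) = \theta(\gamma)Q$, where $\pi:\Gamma\to O(\mathbb{R}^V)$ is the permutation representation $\pi(\gamma)\bm{e}_{(\alpha,v)} = \bm{e}_{(\gamma\alpha,v)}$. Under this representation each $\rho\in\tilde{\Gamma}$ appears with multiplicity $d_\rho\hat{n}$, while in $\theta$ it appears with multiplicity $m_\rho$. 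Because $\Gamma$ is absolutely irreducible, Schur's lemma says that, after choosing orthonormal bases identifying the $\rho$-isotypic component of $\mathbb{R}^V$ with $\mathbb{R}^{d_\rho}\otimes\mathbb{R}^{d_\rho\hat{n}}$ (with $\Gamma$ acting as $\rho\otimes I$) and that of $\mathbb{R}^d$ with $\mathbb{R}^{d_\rho}\otimes\mathbb{R}^{m_\rho}$, every $\Gamma$-equivariant map between them has the form $I_{d_\rho}\otimes A_\rho$ with $A_\rho\in\mathbb{R}^{m_\rho\times d_\rho\hat{n}}$. The orthogonal transformation $\Psi$ supplied by Proposition~\ref{prop:str} can be taken to realize exactly this isotypic change of basis, so it is compatible with the decomposition of $Q$.

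With this setup the forward inclusion $f(\mathcal{C}_\theta(V))\subseteq\{X\in\mathcal{K}_{+,\Gamma}:\rank X_\rho\leq m_\rho\}$ is immediate. For $q\in\mathcal{C}_\theta(V)$ the associated equivariant map decomposes as $\bigoplus_\rho (I_{d_\rho}\otimes A_\rho)$, so in the $\Psi$-basis $Q^\top Q$ becomes $\bigoplus_\rho I_{d_\rho}\otimes(A_\rho^\top A_\rho)$, giving $X_\rho = A_\rho^\top A_\rho$ of rank at most $m_\rho$. The additional constraint $Q\bm{1}_V=\bm{0}$ restricts on the trivial component to $A_{\tri}\bm{1}_{V/\Gamma}=\bm{0}$, which places $X_{\tri}=A_{\tri}^\top A_{\tri}$ in $\mathcal{L}^{V/\Gamma}_+$, consistent with the trivial block of $\mathcal{K}_\Gamma$ being $\mathcal{L}^{V/\Gamma}$ rather than $\mathcal{S}^{V/\Gamma}$.

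For the reverse inclusion, pick any $X\in\mathcal{K}_{+,\Gamma}$ with $\rank X_\rho\leq m_\rho$. For each $\rho$ factorize the PSD block as $X_\rho = A_\rho^\top A_\rho$ with $A_\rho\in\mathbb{R}^{m_\rho\times d_\rho\hat{n}}$; for $\rho=\tri$ the constraint $X_{\tri}\bm{1}_{V/\Gamma}=\bm{0}$ coming from $X_{\tri}\in\mathcal{L}^{V/\Gamma}_+$ automatically forces $A_{\tri}\bm{1}_{V/\Gamma}=\bm{0}$, since $\|A_{\tri}\bm{1}_{V/\Gamma}\|^2=\bm{1}_{V/\Gamma}^\top X_{\tri}\bm{1}_{V/\Gamma}=0$. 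Assembling $Q_\rho:=I_{d_\rho}\otimes A_\rho$ and transferring back to the standard basis yields a $\Gamma$-equivariant map $Q:\mathbb{R}^V\to\mathbb{R}^d$ with $Q\bm{1}_V=\bm{0}$; the associated configuration $q$ lies in $\mathcal{C}_\theta(V)$ and satisfies $\Psi(Q^\top Q)=X$ by construction.

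The main obstacle is the alignment step in the first paragraph: showing that $\Psi$ can be chosen to realize the isotypic change of basis in a way that simultaneously exhibits the tensor factorization for symmetric endomorphisms of $\mathbb{R}^V$ and for equivariant homomorphisms $\mathbb{R}^V\to\mathbb{R}^d$. In the absolutely irreducible case all commutants are $\mathbb{R}$, so this reduces to picking orthonormal bases of the multiplicity spaces, and any residual block-orthogonal adjustment of $\Psi$ affects neither the tensor structure nor the ranks of the blocks $X_\rho$. This is precisely where absolute irreducibility is used, and it is the step that will require extra care with the division rings $\mathbb{C}$ and $\mathbb{H}$ arising as commutants in the general setting treated in Section~\ref{sec:5}.
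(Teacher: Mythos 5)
Your argument is correct and takes a genuinely different route from the paper. The paper proves Proposition~\ref{prop:range} via the explicit formula in Lemma~\ref{lem:1}: it writes $P=\sum_{\gamma}\bm{e}_\gamma^\top\otimes(\theta(\gamma)\tilde{P})$, pushes $P^\top P$ through $\Psi$, and uses Schur orthogonality to obtain the concrete rank-one vectors $w_{\rho,t}$, then reverses the computation to build $\tilde{Q}$ from a rank factorization of $X_\rho$. Your proposal instead views $Q$ as a $\Gamma$-intertwiner $\pi\to\theta$ (where $\pi$ is the left regular representation tensored with $I_{\hat n}$) and invokes the abstract fact that $\Hom_\Gamma$ between isotypic sums is, after a basis change, $\bigoplus_\rho I_{d_\rho}\otimes\mathbb{R}^{m_\rho\times d_\rho\hat n}$. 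This yields $\Psi(Q^\top Q)_\rho=A_\rho^\top A_\rho$ without any orthogonality computation, and the reverse inclusion drops out by factorizing $X_\rho$. This is a cleaner conceptual framing, and it makes transparent why the rank bound is exactly $m_\rho$ and why the trivial block lands in $\mathcal{L}^{V/\Gamma}_+$.

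The one point where you are under-precise — and you flag it yourself — is the compatibility of the $\Psi$ from Proposition~\ref{prop:str} with the isotypic change of basis for intertwiners. The paper's $Z$ is chosen so that $Z^\top R(\gamma)Z=\bigoplus_\rho I_{d_\rho}\otimes\rho(\gamma)$ for the \emph{right} regular representation $R$, whereas your $\pi$ is built from the \emph{left} regular representation $L$. Because $L$ lies in the commutant of $R$, the same $Z$ automatically block-diagonalizes $L$ as $\bigoplus_\rho B_\rho(\gamma)\otimes I_{d_\rho}$, and absolute irreducibility guarantees each $B_\rho\cong\rho$; after a further block-orthogonal adjustment $Z\mapsto Z\cdot\bigoplus_\rho(T_\rho\otimes I_{d_\rho})$ one can arrange $B_\rho=\rho$ exactly, without disturbing the conclusion of Proposition~\ref{prop:str}. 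That adjustment, plus a transposition of tensor factors so that both the image of $\pi$ and the image of $\theta$ have $\rho$ acting in the same slot, is what makes ``$\tilde{Q}=\bigoplus_\rho I_{d_\rho}\otimes A_\rho$'' literally true. This is routine bookkeeping, but it should be spelled out, since it is exactly the step the paper circumvents by its explicit computation and the step that becomes genuinely subtle in Section~\ref{sec:5}, where the commutants are $\mathbb{C}$ and $\mathbb{H}$ and the blocks live in $\mathcal{C}^k$ and $\mathcal{H}^k$ rather than in $\mathcal{S}^k$.
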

    The proof of Proposition~\ref{prop:range} is given in Subsection \ref{sec:4.5}. Using Proposition~\ref{prop:str} and Proposition~\ref{prop:range}, we can now give the proof of Theorem~\ref{thm:sym_nec} for absolutely irreducible $\Gamma$.

    \begin{proof}[Proof of Theorem~\ref{thm:sym_nec} for absolutely irreducible $\Gamma$]
Let $(G,p)$ be a $d$-dimensional $\theta$-symmetric framework with a point group $\theta:\Gamma\rightarrow O(\mathbb{R}^d)$ which is generic modulo symmetry and is universally rigid.
Suppose also that $p$ affinely spans $\mathbb{R}^d$.
Let $G/\Gamma$ be the quotient of $G$, and let $\hat{n}=|V(G/\Gamma)|$, which is the number of vertex orbits in $G$.
 
By a translation, we may suppose that the center of gravity is at the origin, i.e., $p \in {\cal C}_\theta(V)$. 
As the original configuration affinely spans $\mathbb{R}^d$, $\rank P =d$.
Let $m_\rho$ be the multiplicity of an irreducible representation $\rho \in \tilde{\Gamma}$ in $\theta$. Then,
    \begin{equation} \label{eq:a}
        \sum_{\rho \in \tilde{\Gamma}} d_\rho m_\rho=d.
    \end{equation}

    \begin{claim} \label{claim:1}
        Let $k=\sum_{\rho \in \tilde{\Gamma}} \binom{m_\rho+1}{2}$. 
        Then $\Psi(P^\top P) \in {\cal K}_\Gamma$ is locally generic over $\mathbb{Q}_{\theta,\Gamma}$ in $\text{ext}_k(\mathcal{K}_{+,\Gamma})$.
    \end{claim}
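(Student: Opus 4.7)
The plan is to mirror Claim~\ref{claim:tensegrity}, but after the block diagonalization by $\Psi$, using Proposition~\ref{prop:range} in place of the description of $\mathcal{L}_+^V$ via Gram matrices. First I would describe the facial structure of $\mathcal{K}_{+,\Gamma}$. Since $\mathcal{K}_{+,\Gamma} = \bigoplus_{\rho \in \tilde{\Gamma}} I_{d_\rho} \otimes \mathcal{K}_{+,\rho}$ is a Cartesian product of cones (after identifying each summand with $\mathcal{K}_{+,\rho}$ via the linear isomorphism $A \mapsto I_{d_\rho} \otimes A$), every face of $\mathcal{K}_{+,\Gamma}$ decomposes as a direct sum of faces of the individual $\mathcal{K}_{+,\rho}$. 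Applying Proposition~\ref{prop:realface} to the $\rho=\tri$ block and Proposition~\ref{prop:psdface} to the remaining blocks then yields
\[
\dim F_{\mathcal{K}_{+,\Gamma}}(X) = \sum_{\rho \in \tilde{\Gamma}} \binom{\rank X_\rho + 1}{2},
\]
so
\[
\text{ext}_k(\mathcal{K}_{+,\Gamma}) = \left\{ X \in \mathcal{K}_{+,\Gamma} : \sum_{\rho \in \tilde{\Gamma}} \binom{\rank X_\rho + 1}{2} \leq k \right\}.
\]

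Next I would pin down the block ranks of $\Psi(P^\top P)$. Since $\Psi$ is an orthogonal conjugation, $\rank \Psi(P^\top P) = \rank P^\top P = d$, and the identity $\rank(I_{d_\rho} \otimes A) = d_\rho \rank A$ gives $\sum_{\rho} d_\rho \rank (\Psi(P^\top P))_\rho = d$. Proposition~\ref{prop:range} asserts $\rank (\Psi(P^\top P))_\rho \leq m_\rho$ for every $\rho$, and (\ref{eq:a}) reads $\sum_\rho d_\rho m_\rho = d$, so every inequality must be tight: $\rank (\Psi(P^\top P))_\rho = m_\rho$ for all $\rho$. In particular $\dim F_{\mathcal{K}_{+,\Gamma}}(\Psi(P^\top P)) = k$, confirming $\Psi(P^\top P) \in \text{ext}_k(\mathcal{K}_{+,\Gamma})$.

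Finally, by lower semi-continuity of rank applied block by block, there is a neighborhood $U$ of $\Psi(P^\top P)$ in $\mathcal{K}_\Gamma$ such that every $X \in U \cap \mathcal{K}_{+,\Gamma}$ satisfies $\rank X_\rho \geq m_\rho$ for all $\rho$. Inside $U$ the extreme-point condition then forces $\rank X_\rho = m_\rho$ for every $\rho$, and Proposition~\ref{prop:range} yields
\[
U \cap \text{ext}_k(\mathcal{K}_{+,\Gamma}) = U \cap f(\mathcal{C}_\theta(V)).
\]
Since $p$ is generic over $\mathbb{Q}_{\theta,\Gamma}$ in $\mathcal{C}_\theta(V)$ and $f$ is algebraic over $\mathbb{Q}_{\theta,\Gamma}$ (the entries of $Z_\Gamma$ lie in $\mathbb{Q}_{\theta,\Gamma}$ by the definition of the latter), the $\mathbb{Q}_{\theta,\Gamma}$-analogue of Proposition~\ref{prop inherit genericity} implies that $\Psi(P^\top P) = f(p)$ is generic in $f(\mathcal{C}_\theta(V))$, and therefore generic in $U \cap f(\mathcal{C}_\theta(V)) = U \cap \text{ext}_k(\mathcal{K}_{+,\Gamma})$, which is the desired local genericity.

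The main point to justify carefully is the face-decomposition in the first step, since it drives the whole rank count; everything else follows the tensegrity argument of Claim~\ref{claim:tensegrity} almost line by line, with Proposition~\ref{prop:range} standing in for (\ref{eq:conf2}) and the multiplicities $m_\rho$ playing the role that $d$ played there.
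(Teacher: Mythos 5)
Your proposal is correct and mirrors the paper's own proof essentially step by step: both determine the block ranks $\rank\Psi(P^\top P)_\rho=m_\rho$ by combining Proposition~\ref{prop:range} with (\ref{eq:a}), describe $\text{ext}_k(\mathcal{K}_{+,\Gamma})$ via the block-wise facial dimension formula, use lower semi-continuity of rank to build a neighborhood in which the rank constraints become equalities, and then invoke the $\mathbb{Q}_{\theta,\Gamma}$-version of Proposition~\ref{prop inherit genericity}. The only cosmetic differences are the order of the first two steps and your stating the equality $U\cap\text{ext}_k(\mathcal{K}_{+,\Gamma})=U\cap f(\mathcal{C}_\theta(V))$ where the paper uses just the containment; both are correct and the containment is all that is needed.
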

    \begin{proof}
        Since $(G,p)$ is generic modulo symmetry, $p$ is generic over $\mathbb{Q}_{\theta,\Gamma}$ in ${\cal C}_\theta(V)$.
        The map $f:q \mapsto \Psi(Q^\top Q)$ is algebraic over $\mathbb{Q}_{\theta,\Gamma}$.
        By Proposition~\ref{prop inherit genericity} over $\mathbb{Q}_{\theta,\Gamma}$, $\Psi(P^\top P)$ is generic in $f(\mathcal{C}_\theta(V))$ over $\mathbb{Q}_{\theta,\Gamma}$.
        Also, $\rank \Psi(P^\top P) = \rank P^\top P =\rank P=d$. Hence, by Proposition~\ref{prop:range} and (\ref{eq:a}), we obtain $\rank \Psi(P^\top P)_\rho=m_\rho$ for $\rho \in \tilde{\Gamma}$.

By the definition of $\mathcal{K}_{+,\Gamma}$, Proposition~\ref{prop:psdface} and Proposition~\ref{prop:realface},
        \begin{equation} \label{eq:b}
            \text{ext}_k(\mathcal{K}_{+,\Gamma}) = \left\{ X \in \mathcal{K}_{+,\Gamma} : \sum_{\rho \in \tilde{\Gamma}} \binom{\rank X_\rho +1}{2} \leq k \right\}.
        \end{equation}
        For each $\rho \in \tilde{\Gamma}$, by the lower semi-continuity of rank, there exists a neighborhood $U_\rho$ of $\Psi(P^\top P)_\rho$ in ${\cal K}_\rho$ 
        in which the rank of any matrix is at least $\rank \Psi(P^\top P)_\rho =m_\rho$.
        $U=\oplus_{\rho \in \tilde{\Gamma}} I_{d_\rho} \otimes U_\rho$ is a neighborhood of $\Psi(P^\top P)$ in ${\cal K}_\Gamma$.
        Hence, by (\ref{eq:b}) and $k=\sum_{\rho \in \tilde{\Gamma}} \binom{m_\rho+1}{2}$, we have
        \[
            U \cap \text{ext}_k(\mathcal{K}_{+,\Gamma})
            \subseteq \left\{ X \in \mathcal{K}_{+,\Gamma} : \rank X_\rho = m_\rho ~ (\rho \in \tilde{\Gamma})\right\},
        \]
which further implies, by Proposition~\ref{prop:range}, 
\[
            U \cap \text{ext}_k(\mathcal{K}_{+,\Gamma})
            \subseteq \left\{ X \in \mathcal{K}_{+,\Gamma} : \rank X_\rho = m_\rho ~ (\rho \in \tilde{\Gamma}) \right\}\subseteq f({\cal C}_{\theta}(V)).
\]
Hence, since $\Psi(P^\top P)$ is generic in $f({\cal C}_{\theta}(V))$, $\Psi(P^\top P)$ is locally generic in $\text{ext}_k(\mathcal{K}_{+,\Gamma})$.
    \end{proof}

    Consider the subspace  
    \[
        \mathcal{K}_\Gamma(G)=\text{span} \left\{\Psi(F_{(u,v,\gamma)}) : (u,v,\gamma) \in E(G/\Gamma)\right\}
    \]
    of $\mathcal{K}_\Gamma$ and the projection $\pi:\mathcal{K}_\Gamma \rightarrow \mathcal{K}_\Gamma(G)$.
    Proposition~\ref{prop:2} states that  $\Psi(P^\top P)$ is the unique solution of (P$^\Psi$),
and hence  $\pi^{-1}(\pi(\Psi(P^\top P))) \cap \mathcal{K}_{+,\Gamma}$ is a singleton.
    Since $\mathcal{K}_{+,\Gamma}$ is a closed line-free convex set and $\Psi(P^\top P)$ is locally generic in $\text{ext}_k(\mathcal{K}_{+,\Gamma})$ with $k=\sum_{\rho \in \tilde{\Gamma}} \binom{m_\rho+1}{2}$ by Claim~\ref{claim:1}, Proposition~\ref{prop:GT} can be applied.
    Hence there exists a hyperplane $H=\{\langle X,L \rangle =0 : X \in \mathcal{K}_\Gamma(G)\}$ in $\mathcal{K}_\Gamma(G)$ defined by $L \in \mathcal{K}_\Gamma(G)$
    such that $\pi^{-1}(H)=\{\langle X,L \rangle =0 : X \in \mathcal{K}_\Gamma\}$ exposes $F_{\mathcal{K}_{+,\Gamma}}(\Psi(P^\top P))$.

Following the decomposition of the ambient space ${\cal K}_{\Gamma}$, we have the decomposition of 
the face $F_{\mathcal{K}_{+,\Gamma}}(\Psi(P^\top P))$ as follows:
    \[
        F_{\mathcal{K}_{+,\Gamma}}(\Psi(P^\top P)) = \bigoplus_{\rho \in \tilde{\Gamma}} I_{d_\rho} \otimes F_{\mathcal{K}_{+,\rho}} (\Psi(P^\top P)_\rho).
    \]
In each component ${\cal K}_{\rho}$ of ${\cal K}_{\Gamma}$, $\pi^{-1}(H)$ restricted to ${\cal K}_{\rho}$ is a hyperplane of ${\cal K}_{\rho}$ given by $\{\langle X,L_\rho \rangle =0 : X \in \mathcal{K}_\Gamma\}$ and it exposes $F_{\mathcal{K}_{+,\rho}} (\Psi(P^\top P)_\rho)$.
(Recall that $L_\rho$ denotes the projected image of $L$ to ${\cal K}_\rho$.)
Since $\rank \Psi(P^\top P)_\rho=m_{\rho}$,  Proposition~\ref{prop:psdface} and Proposition~\ref{prop:realface} imply
    \begin{align}
        &\rank L_\rho = \begin{cases} \hat{n}-m_\tri-1 & (\text{if }\rho =\tri)  \\
\hat{n}d_\rho-m_\rho & (\text{if } \rho \in \tilde{\Gamma} \setminus \{\tri\}) 
\end{cases} \label{eq:symprf4} \\
        &L_\rho \succeq 0, ~ \langle \Psi(P^\top P)_\rho, L_\rho \rangle=0. \label{eq:symprf5}
    \end{align}
for every $\rho \in \tilde{\Gamma}$.
    By (\ref{eq:d1}), (\ref{eq:a}), (\ref{eq:symprf4}),
    \begin{equation} \label{eq:symprf6}
    \rank L = \hat{n}-m_\tri-1 + \sum_{\rho \in \tilde{\Gamma} \setminus \{1\}} d_\rho(\hat{n}d_\rho-m_\rho) = \hat{n}|\Gamma|-d-1 = n-d-1.
    \end{equation}
    By $L \in \mathcal{K}_\Gamma(G)$, $L=\sum_{(u,v,\gamma) \in E(G/\Gamma)} \hat{\omega}_{(u,v,\gamma)}\Psi(F_{(u,v,\gamma)})$ for some 
    $\hat{\omega}: E(G/\Gamma) \rightarrow \mathbb{R}$.
    Hence, by (\ref{eq:symprf5}) and (\ref{eq:symprf6}), $\Psi^{-1}(L) = \sum_{(u,v,\gamma) \in E(G/\Gamma)} \hat{\omega}_{(u,v,\gamma)}F_{(u,v,\gamma)}$ is a $\Gamma$-symmetric weighted Laplacian of $G$ satisfying
    \[
        \Psi^{-1}(L) \succeq 0, \quad \rank \Psi^{-1}(L)=n-d-1, \quad \langle P^\top P, \Psi^{-1}(L) \rangle =0.
    \]
    Therefore $\Psi^{-1}(L)$ satisfies the properties of the statement.
    \end{proof}
    
\subsection{Proofs of Proposition~\ref{prop:str} and Proposition~\ref{prop:range}} \label{sec:4.5}
We give a proof of Proposition~\ref{prop:str} and Proposition~\ref{prop:range}.
Let $V=\Gamma \times V/\Gamma$ be a finite set, and denote 
$n=|V|$ and $\hat{n}=|V/\Gamma|$.
In the subsequent discussion, we shall frequently look at vectors in  $\mathbb{R}^{\Gamma}\otimes \mathbb{R}^{V/\Gamma}$ (which can be identified with $\mathbb{R}^V$) and matrices in $\mathbb{R}^{\Gamma\times \Gamma}\otimes \mathbb{R}^{V/\Gamma\times V/\Gamma}$.
Let $\{\be_\gamma: \gamma\in \Gamma\}$ be the standard basis of $\mathbb{R}^{\Gamma}$ and $\{\be_v: v\in V/\Gamma\}$ be that of $\mathbb{R}^{V/\Gamma}$.
Then $\{E_{\alpha,\beta}:=\be_\alpha\be_\beta^{\top}: \alpha,\beta\in \Gamma\}$ and $\{E_{u,v}:=\be_u\be_v^\top: u,v\in V/\Gamma\}$ form a base of 
$\mathbb{R}^{\Gamma\times \Gamma}$ and a base of $\mathbb{R}^{V/\Gamma\times V/\Gamma}$, respectively.

With this notation, the {\em right regular representation} $R:\Gamma \rightarrow \mathbb{R}^{\Gamma\times \Gamma}$ of $\Gamma$ is defined by 
\begin{equation}\label{eq:regular}    
R(\gamma) = \sum_{\alpha \in \Gamma} E_{\alpha,\alpha\gamma}.
\end{equation}
A basic fact from representation theory is that, over the complex field, 
the right regular representation is (unitary) equivalent to 
$\bigoplus_{\rho} I_{d_\rho} \otimes \rho(\gamma)$,
where the sum is taken over all non-equivalent complex irreducible representations. 
Since $\Gamma$ is assumed to be absolutely irreducible, this in turn implies that there is an orthogonal matrix $Z\in O(\mathbb{R}^{\Gamma})$ such that 
\begin{equation} \label{eq:4.5-1}
            Z^\top R(\gamma) Z = \bigoplus_{\rho \in \tilde{\Gamma}} I_{d_\rho} \otimes \rho(\gamma) \quad (\gamma \in \Gamma),
\end{equation}
where recall that $\tilde{\Gamma}$ is the set of all {\em real} irreducible representations of $\Gamma$.
This orthogonal transformation gives a linear isomorphism between 
${\cal T}:={\rm span}\{R(\gamma):\gamma\in \Gamma\}$ and 
$\bigoplus_{\rho\in \tilde{\Gamma}} I_{d_{\rho}} \otimes \mathbb{R}^{d_{\rho}\times d_{\rho}}$.
Indeed, if we set 
$\psi:\mathbb{R}^{\Gamma\times \Gamma}\ni X\mapsto Z^{\top} XZ \in \mathbb{R}^{\Gamma\times \Gamma}$, then by (\ref{eq:4.5-1}),
$\psi({\cal T}) \subseteq \bigoplus_{\rho\in \tilde{\Gamma}} I_{d_{\rho}} \otimes \mathbb{R}^{d_{\rho}\times d_{\rho}}$, and 
since $Z$ is non-singular $\psi|_{\cal T}$ is an injective linear map.
By (\ref{eq:d1}), ${\cal T}$ and $\bigoplus_{\rho\in \tilde{\Gamma}} I_{d_{\rho}} \otimes \mathbb{R}^{d_{\rho}\times d_{\rho}}$ have the same dimensions,
and hence $\psi|_{\cal T}$ is an isomorphism.

Using $Z$, we consider an orthogonal map $\Psi:\mathbb{R}^{V\times V} \rightarrow \mathbb{R}^{V\times V}$ given by $\Psi(X)=(Z \otimes I_{\hat{n}})^\top X (Z \otimes I_{\hat{n}})$. We show this is the desired map for Proposition~\ref{prop:str}. 

\begin{proof}[Proof of Proposition~\ref{prop:str}]
 Since $\psi$ is a linear isomorphism between ${\cal T}$ and  $\bigoplus_{\rho\in \tilde{\Gamma}} I_{d_{\rho}} \otimes \mathbb{R}^{d_{\rho}\times d_{\rho}}$, we have
\begin{equation}\label{eq:str0}
\Psi({\cal T}\otimes \mathbb{R}^{V/\Gamma\times V/\Gamma})=\bigoplus_{\rho\in \tilde{\Gamma}} I_{d_{\rho}} \otimes \mathbb{R}^{d_{\rho}\times d_{\rho}}\otimes \mathbb{R}^{V/\Gamma\times V/\Gamma}.
\end{equation}

Also (\ref{eq:Lsymmetric}) says that a Laplacian $L\in \mathbb{R}^V$ is in $({\cal L}^V)^{\Gamma}$ if and only if it is spanned by $\{R(\gamma) \otimes E_{u,v}:\gamma\in \Gamma, u,v \in V/\Gamma\}$. In other words, 
\begin{equation}\label{eq:str01}
({\cal L}^V)^{\Gamma}={\cal L}^V\cap ({\cal T}\otimes \mathbb{R}^{V/\Gamma\times V/\Gamma}).
\end{equation}
Instead of computing $\Psi(({\cal L}^V)^{\Gamma})$, we now  compute the $\Psi$-image of  the right side of (\ref{eq:str01}). 
\begin{claim} \label{claim:str02}
$X\in {\cal T}\otimes \mathbb{R}^{V/\Gamma\times V/\Gamma}$ is Laplacian if and only if $\Psi(X)$ is symmetric and  $\Psi(X)_\tri$ is Laplacian,
where $\Psi(X)_\tri$ stands for the projected image of $\Psi(X)$ to the component associated with the trivial representation in (\ref{eq:str0}).
\end{claim}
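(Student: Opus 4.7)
The plan is to reduce the claim to understanding how the all-ones vector $\mathbf{1}_V$ transforms under $(Z\otimes I_{\hat{n}})^{\top}$, and then to exploit the block-diagonal structure of $\Psi(X)$ that follows from $X\in \mathcal{T}\otimes \mathbb{R}^{V/\Gamma\times V/\Gamma}$.

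First I would handle symmetry, which is immediate: since $\Psi$ is conjugation by the orthogonal matrix $Z\otimes I_{\hat{n}}$, $X$ is symmetric if and only if $\Psi(X)$ is symmetric, and in the latter case every block (in particular $\Psi(X)_\tri$) is also symmetric. So after peeling off symmetry, the claim reduces to showing that the kernel condition $X\mathbf{1}_V=0$ is equivalent to $\Psi(X)_\tri \mathbf{1}_{V/\Gamma}=0$ (given the block-diagonal structure of $\Psi(X)$).

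Next I would analyze $(Z\otimes I_{\hat{n}})^{\top}\mathbf{1}_V = (Z^{\top}\mathbf{1}_\Gamma)\otimes \mathbf{1}_{V/\Gamma}$. The vector $\mathbf{1}_\Gamma$ is $R(\gamma)$-invariant for every $\gamma\in\Gamma$ because $R(\gamma)$ merely permutes the standard basis of $\mathbb{R}^{\Gamma}$. Under the decomposition (\ref{eq:4.5-1}), the subspace of $R(\Gamma)$-invariants of $\mathbb{R}^{\Gamma}$ corresponds to the trivial isotypic component, which has dimension $d_\tri^{2}=1$. Hence $Z^{\top}\mathbf{1}_\Gamma = c\,\be_1$, where $\be_1$ is the single basis vector of the trivial block (and $c=\sqrt{|\Gamma|}$). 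Consequently $(Z\otimes I_{\hat{n}})^{\top}\mathbf{1}_V$ is supported only in the coordinates corresponding to the trivial block $\mathcal{K}_\tri=\mathbb{R}^{V/\Gamma\times V/\Gamma}$, and within that block it equals a scalar multiple of $\mathbf{1}_{V/\Gamma}$.

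Now I would combine these facts. Because $X\in\mathcal{T}\otimes \mathbb{R}^{V/\Gamma\times V/\Gamma}$, the matrix $\Psi(X)$ lies in $\bigoplus_{\rho}I_{d_\rho}\otimes \mathcal{K}_{\rho}$ (this is (\ref{eq:str0})), so $\Psi(X)$ is block-diagonal in the decomposition of $\mathbb{R}^V$ induced by $Z\otimes I_{\hat{n}}$. Multiplying such a block-diagonal matrix by a vector supported only in the trivial block yields a vector again supported only in the trivial block, whose value there is $\Psi(X)_\tri\mathbf{1}_{V/\Gamma}$. Therefore
\[
X\mathbf{1}_V=0 \iff \Psi(X)\,(Z\otimes I_{\hat{n}})^{\top}\mathbf{1}_V=0 \iff \Psi(X)_\tri \mathbf{1}_{V/\Gamma}=0,
\]
which together with the symmetry equivalence gives the claim.

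The most delicate step is the representation-theoretic identification of $Z^{\top}\mathbf{1}_\Gamma$ with a vector supported in the trivial block; everything else is routine linear algebra with the tensor decomposition. I expect no serious obstacle beyond being careful about the chosen ordering of the blocks in the decomposition (\ref{eq:4.5-1}) so that the trivial representation corresponds to the first coordinate.
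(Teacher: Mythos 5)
Your proof is correct and follows essentially the same route as the paper's: reduce to the kernel condition, observe that $\bm{1}_\Gamma$ spans the trivial subrepresentation of the regular representation so that $Z^\top\bm{1}_\Gamma$ is a scalar multiple of the trivial-block unit vector, and then use the block-diagonality of $\Psi(X)$ to conclude. (Incidentally, your normalization $Z^\top\bm{1}_\Gamma=\sqrt{|\Gamma|}\,\be_\tri$ is the correct one, since $Z$ is orthogonal; the paper writes $|\Gamma|$, but this is immaterial to the argument.)
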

\begin{proof}
Clearly, $X$ is symmetric if and only if $\Psi(X)$ is symmetric.
Also, a symmetric $X \in \mathcal{S}^n$ is Laplacian if and only if $X\bm{1}_n=\bm{0}$.
Hence it suffices to show that
\begin{equation}\label{eq:str03}
\text{$X\bm{1}_V=\bm{0}$ if and only if $\Psi(X)_\tri \bm{1}_{V/\Gamma}=\bm{0}$}.
\end{equation}

To see this, we use a fact that the span of the vector $\bm{1}_\Gamma\in \mathbb{R}^{\Gamma}$ is 
the subrepsentation of $R$ and corresponds to the trivial representation. (This can be checked by $R(\gamma)\bm{1}_\Gamma=\bm{1}_\Gamma$ for any $\gamma\in \Gamma$.)
Hence $Z^\top \bm{1}_\Gamma =|\Gamma|\bm{e}_\tri$, where $\bm{e}_\tri$ is a unit vector in $\mathbb{R}^\Gamma$.
This implies that 
\begin{align*}
(Z^{\top}\otimes I_{V/\Gamma}) \bm{1}_V
=(Z^{\top}\otimes I_{V/\Gamma}) (\bm{1}_\Gamma \otimes \bm{1}_{V/\Gamma})
=(Z^{\top}\bm{1}_{\Gamma})\otimes \bm{1}_{V/\Gamma} 
=|\Gamma| \bm{e}_{\tri}\otimes \bm{1}_{V/\Gamma}.
\end{align*}
Since $X\bm{1}_V=\bm{0}$ if and only if $\Psi(X)(Z^{\top}\otimes I_{V/\Gamma})\bm{1}_V=\bm{0}$, 
we obtain (\ref{eq:str03}), and hence the claim follows.
%
\end{proof}

Combining the relations obtained so far, we get
\begin{align*}
\Psi(({\cal L}^V)^{\Gamma})&=\Psi({\cal L}^V\cap ({\cal T}\otimes \mathbb{R}^{V/\Gamma\times V/\Gamma})) & (\text{by (\ref{eq:str01}}))\\
&=\left\{X: X\in \bigoplus_{\rho\in \tilde{\Gamma}} 
I_{d_{\rho}}\otimes \mathbb{R}^{d_\rho\times d_{\rho}}\otimes \mathbb{R}^{V/\Gamma\times V/\Gamma}, X \in {\cal S}^V, X_\tri \bm{1}_{V/\Gamma}=\bm{0} \right\} & (\text{by (\ref{eq:str0}) and Claim~\ref{claim:str02}}) 
\\
&=\left\{X: X\in \bigoplus_{\rho\in \tilde{\Gamma}}I_{d_{\rho}}\otimes \mathcal{S}^{d_\rho\hat{n}}, X_\tri \bm{1}_{V/\Gamma}=\bm{0} \right\} \\
&=\left\{X: X\in\left( \mathcal{L}^{V/\Gamma}\oplus \bigoplus_{\rho\in \tilde{\Gamma} \setminus \{\tri\}}I_{d_{\rho}}\otimes \mathcal{S}^{d_\rho\hat{n}}  \right)\right\}
\end{align*}
as required.
\end{proof}

    \if0
    \begin{proof}
        It is known that, for a complex irreducible representation $\pi$, the multiplicity of $\pi$ in the regular representation is $d_\pi$.
        Recall that each real irreducible representation $\rho \in \tilde{\Gamma}$ is absolutely irreducible and is an orthogonal matrix representation.
        Since equivalent orthogonal representations are equivalent by an orthogonal transformation, there exists an orthogonal matrix $Z \in O(\mathbb{R}^\Gamma)$ satisfying
        \begin{equation} \label{eq:4.5-1}
            Z^\top R(\gamma) Z = \bigoplus_{\rho \in \tilde{\Gamma}} I_{d_\rho} \otimes \rho(\gamma) \quad (\gamma \in \Gamma).
        \end{equation}

        A map $\Psi:M_n(\mathbb{R}) \rightarrow M_n(\mathbb{R}); X \mapsto (Z \otimes I_{\hat{n}})^\top X (Z \otimes I_{\hat{n}})$ is an orthogonal transformation and satisfies (i).
        Hence it is sufficient to prove that $\Psi$ satisfies (ii).
        For $(u,v,\gamma) \in \hat{E}(K_{\hat{V}})$,  
        \begin{align*}
            F_{(u,v,\gamma)} &= \sum_{\alpha \in \Gamma} \left( E_{(\alpha,u)(\alpha,u)} + E_{(\alpha\gamma,v)(\alpha\gamma,v)} -E_{(\alpha,u)(\alpha\gamma,v)} -E_{(\alpha\gamma,v)(\alpha,u)} \right)
            \\ &= R(e_\Gamma) \otimes (E_{uu} +E_{vv}) - R(\gamma) \otimes E_{uv} - R(\gamma^{-1}) \otimes E_{vu}.
        \end{align*}
        By (i),
        \[
        \begin{array}{l}
            \Psi(F_{(u,v,\gamma)})_1 = E_{uu} + E_{vv} -E_{uv} -E_{vu}=F_{uv} \\
            \Psi(F_{(u,v,\gamma)})_\rho = \rho(e_\Gamma) \otimes (E_{uu} +E_{vv}) - \rho(\gamma) \otimes E_{uv} - \rho(\gamma^{-1}) \otimes E_{vu} \quad (\rho \in \tilde{\Gamma} \setminus \{1\}).
        \end{array}
        \]
        As $\rho$ is orthogonal, $\Psi(F_{(u,v,\gamma)})$ is in ${\cal K}_\Gamma$.
        Since $\{ F_{(u,v,\gamma)} \}$ is the basis of $(\mathcal{L}^V)^\Gamma$, $\Psi((\mathcal{L}^V)^\Gamma) \subseteq \mathcal{K}_\Gamma$ holds.

        For the other inclusion, we shall prove that every basis of ${\cal K}_{\Gamma}$ is in $\Psi(({\cal L}^V)^\Gamma)$.
        The basis of ${\cal K}_{\Gamma}$ consists of
        \[
        \begin{array}{ll}
            F_{uv} \oplus O \oplus \cdots \oplus O & (u,v \in \hat{V}), \\
            O \oplus \cdots \oplus \left( I_{d_\rho} \otimes (E_{(k,u)(l,v)}+E_{(l,v)(k,u)}) \right) \oplus \cdots \oplus O & (u,v \in \hat{V}, 1\leq k,l\leq d_\rho, \rho \in \tilde{\Gamma} \setminus \{1\}).
        \end{array} 
        \]

        For $u, v \in \hat{V}$, let $A= J_\Gamma \otimes F_{uv}$.s
        As $A$ is symmetric and $A\bm{1}_V = (J_\Gamma \bm{1}_\Gamma) \otimes (F_{uv}\bm{1}_{\hat{V}})=0$, $A$ is a Laplacian matrix.
        $A$ is also a $\Gamma$-symmetric matrix, so $A \in ({\cal L}^V)^\Gamma$. By (i) and Schur orthogonality and $J_\Gamma = \sum_{\gamma \in \Gamma} R(\gamma)$,
        \begin{align*}
            \Psi(A)
            =\bigoplus_{\rho \in \tilde{\Gamma}} I_{d_\rho} \otimes ( \sum_{\gamma \in \Gamma} \rho(\gamma) )  \otimes F_{uv}
            = F_{uv} \oplus O \oplus \cdots \oplus O.
        \end{align*}

        For $\rho \in \tilde{\Gamma} \setminus \{1\}, u,v \in \hat{V}, 1\leq k,l\leq d_\rho$, 
        let $B =\frac{d_\rho}{|\Gamma|} \left(\sum_{\gamma \in \Gamma} (\rho_{kl}(\gamma) + \rho_{lk}(\gamma)) R(\gamma) \right) \otimes E_{uv}$.
        By Schur orthogonality and $R(\gamma)\bm{1}_\Gamma=\bm{1}_\Gamma$ ($\gamma \in \Gamma$),
        \[
            B\bm{1}_V = \frac{d_\rho}{|\Gamma|} \left(\sum_{\gamma \in \Gamma} (\rho_{kl}(\gamma) + \rho_{lk}(\gamma)) R(\gamma)\bm{1}_\Gamma \right) \otimes (E_{uv}\bm{1}_{\hat{V}} )
            =\frac{d_\rho}{|\Gamma|} \left(\sum_{\gamma \in \Gamma} (\rho_{kl}(\gamma) + \rho_{lk}(\gamma)) \bm{1}_\Gamma \right) \otimes E_{uv}
            =0.
        \]
        As $B$ is symmetric and is also a $\Gamma$-symmetric matrix, $B \in ({\cal L}^V)^\Gamma$. By (i) and Schur orthogonality,
        \begin{align*}
            \Psi(B) &= \frac{d_\rho}{|\Gamma|} \left( \bigoplus_{\rho' \in \tilde{\Gamma}} I_{d_{\rho'}} \otimes 
            \left( \sum_{\gamma \in \Gamma} (\rho_{kl}(\gamma)+\rho_{lk}(\gamma)) \otimes \rho'(\gamma) \right)  \right) \otimes E_{uv} \\
            &= O \oplus \cdots \oplus \left( I_{d_\rho} \otimes (E_{kl}+E_{lk})\otimes E_{uv} \right)\oplus \cdots O \\
            &= O \oplus \cdots \oplus \left( I_{d_\rho} \otimes (E_{(k,u)(l,v)}+E_{(l,v)(k,u)}) \right)\oplus \cdots O.
        \end{align*}

        As every basis of ${\cal K}_\Gamma$ is in $\Psi(({\cal L}^V)^\Gamma)$, ${\cal K}_\Gamma \subseteq \Psi(({\cal L}^V)^\Gamma)$, which completes the proof.
    \end{proof}
    \fi

    Next we prove Proposition~\ref{prop:range}. In the proof, we use the following orthogonality relation of complex irreducible representations (see, e.g.,~\cite{serre1977linear} for details).
    \begin{prop}[Schur orthogonality] \label{prop:schur}
        Let $\pi:\Gamma \rightarrow GL_{d_\pi}(\mathbb{C})$ and $\pi':\Gamma \rightarrow GL_{d_{\pi'}}(\mathbb{C})$ be complex irreducible representations.
        If $\pi$ and $\pi'$ are not equivalent, then 
\[
\sum_{\gamma \in \Gamma} (\pi(\gamma)[k,l]) (\overline{\pi'(\gamma)[k',l']}) =0\] 
for any $1 \leq k,l \leq d_\pi$ and $1 \leq k',l' \leq d_{\pi'}$.
        If $\pi=\pi'$, then
        \[
            \sum_{\gamma \in \Gamma} (\pi(\gamma)[k,l]) (\overline{\pi(\gamma)[k',l']}) = 
            \begin{cases} 
                \frac{|\Gamma|}{d_\pi} & ( k=k', l=l'), \\
                0 & (\text{otherwise}) 
            \end{cases}
        \]
 for any $1 \leq k,l,k',l' \leq d_\pi$.
        \end{prop}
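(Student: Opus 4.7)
The plan is to prove Schur orthogonality by the standard averaging-plus-Schur's-lemma argument. First I would reduce to the case where both $\pi$ and $\pi'$ are unitary matrix representations. For a finite group $\Gamma$, any complex representation admits a $\Gamma$-invariant Hermitian inner product (obtained by averaging an arbitrary one over $\Gamma$), so it is equivalent to a unitary representation. The statement as written only makes sense for a chosen matrix form, and the natural form is the unitary one; under unitarity we have the key identity $\pi'(\gamma^{-1})[l',k'] = \overline{\pi'(\gamma)[k',l']}$, which is precisely what lets us convert sums of products of entries with complex conjugates into sums involving $\pi'(\gamma^{-1})$.

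Next, for any fixed matrix $A \in \mathbb{C}^{d_\pi \times d_{\pi'}}$, I would form the averaged operator
\[
M_A = \frac{1}{|\Gamma|}\sum_{\gamma \in \Gamma} \pi(\gamma)\, A\, \pi'(\gamma^{-1}),
\]
and verify directly by replacing $\gamma$ with $\sigma^{-1}\gamma$ that $\pi(\sigma) M_A = M_A \pi'(\sigma)$ for every $\sigma \in \Gamma$. Thus $M_A$ is an intertwiner from $\pi'$ to $\pi$. Schur's lemma then gives two cases: if $\pi$ and $\pi'$ are inequivalent, $M_A = 0$ for every $A$; if $\pi = \pi'$, then $M_A$ commutes with all $\pi(\sigma)$ and irreducibility forces $M_A = \lambda_A I$ for a scalar $\lambda_A$. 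Taking traces and using cyclicity yields $\lambda_A = \operatorname{tr}(A)/d_\pi$.

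To extract the entrywise orthogonality relations, I would specialize $A$ to a matrix unit. Choosing $A = E_{l,l'}$, the matrix with a $1$ in position $(l,l')$ and $0$ elsewhere, a direct index computation gives
\[
(M_A)[k,k'] = \frac{1}{|\Gamma|}\sum_{\gamma \in \Gamma} \pi(\gamma)[k,l]\, \pi'(\gamma^{-1})[l',k'] = \frac{1}{|\Gamma|}\sum_{\gamma \in \Gamma} \pi(\gamma)[k,l]\, \overline{\pi'(\gamma)[k',l']},
\]
where the last equality uses unitarity of $\pi'$. In the inequivalent case this quantity is $0$, yielding the first assertion. In the case $\pi = \pi'$, we have $\lambda_A = \operatorname{tr}(E_{l,l'})/d_\pi = \delta_{l,l'}/d_\pi$, so $(M_A)[k,k'] = \delta_{l,l'}\delta_{k,k'}/d_\pi$; multiplying by $|\Gamma|$ produces exactly the stated normalization $|\Gamma|/d_\pi$ when $k=k',\, l=l'$ and zero otherwise.

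I do not anticipate any serious obstacle: averaging over $\Gamma$, intertwiner verification, and Schur's lemma are all entirely standard. The only minor subtlety is the unitary reduction, since the statement involves complex conjugation of entries, which is only compatible with the inversion identity $\pi'(\gamma^{-1}) = \pi'(\gamma)^\ast$ in the unitary model. I would make this reduction explicit at the outset to keep the rest of the argument clean.
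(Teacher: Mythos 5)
Your proof is correct: it is the standard averaging-plus-Schur's-lemma argument, which is exactly the treatment in the reference the paper cites for this fact (the paper itself gives no proof of Proposition~\ref{prop:schur}, citing Serre instead). Your explicit remark that the conjugate form of the relations requires the unitary matrix model is a worthwhile precision, and it is consistent with the paper's standing convention that all representatives are taken orthogonal/unitary.
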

    Let $\Gamma$ be an absolutely irreducible  finite group  and $(G,p)$ be a $d$-dimensional $\theta$-symmetric framework with $\theta:\Gamma \rightarrow O(\mathbb{R}^d)$.
We fix a representative vertex from each vertex orbit, 
and let $\tilde{P}$ be a $d \times \hat{n}$ matrix given by arranging the coordinates of the representative vertices.
Then we may suppose (by permuting the columns appropriately)
\begin{equation}
\label{eq:4.4.1}
P=\sum_{\gamma \in \Gamma} \bm{e}_\gamma^\top \otimes (\theta(\gamma) \tilde{P}).
\end{equation}

Since $\theta$ is an orthogonal representation, there is an orthogonal matrix $Y \in O(\mathbb{R}^d)$ such that 
    \begin{equation} \label{eq:f}
        Y \theta(\gamma) Y^\top = \bigoplus_{\rho \in \tilde{\Gamma}} I_{m_\rho} \otimes \rho(\gamma),
\end{equation}
where recall that $m_\rho$ denotes the multiplicity of $\rho \in \tilde{\Gamma}$ in $\theta$.
   
 Let $S$ be a set of indices defined by
    \[
        S = \left\{ (\rho,t,k) : \rho \in \tilde{\Gamma}, 1 \leq t \leq m_\rho, 1 \leq k \leq d_\rho \right\}.
    \]
By (\ref{eq:a}), $|S|=d$. Hence in the following discussion we identify $\mathbb{R}^S$ with $\mathbb{R}^d$ 
by taking the standard basis $\{\be_{(\rho,t,k)}:(\rho,t,k)\in S\}$. 

With these notations, we can calculate $\Psi(P^\top P)$ explicitly.
    \begin{lemma} \label{lem:1}
For each $\rho\in \tilde{\Gamma}$ and $t$ with $1\leq t\leq m_{\rho}$, define $w_{\rho,t} \in \mathbb{R}^{d_\rho}\otimes \mathbb{R}^{V/\Gamma}$ by
        \begin{equation}\label{eq:4.7}
            w_{\rho,t}= \sqrt{\frac{|\Gamma|}{d_\rho}} \sum_{1 \leq k \leq d_\rho} \bm{e}_k \otimes \left(\tilde{P}^\top Y^\top \bm{e}_{(\rho,t,k)} \right).
        \end{equation}
Then, for each $\rho\in \tilde{\Gamma}$, 
        \[
        \Psi(P^\top P)_\rho = \sum_{1 \leq t \leq m_\rho} w_{\rho,t} {w_{\rho,t}}^\top.
        \]
    \end{lemma}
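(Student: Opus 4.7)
The plan is to compute $\Psi(P^{\top}P)_\rho$ entrywise by expanding $P^{\top}P$ through the two block-diagonalizations (of the right regular representation $R$ by $Z$, and of the point group $\theta$ by $Y$) and then applying Schur orthogonality to collapse the sum over $\Gamma$.

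First, I would use (\ref{eq:4.4.1}) to write
\[
P^{\top}P = \sum_{\alpha,\beta \in \Gamma} (\be_\alpha \be_\beta^\top) \otimes (\tilde{P}^\top \theta(\alpha^{-1}\beta) \tilde{P}) = \sum_{\gamma \in \Gamma} R(\gamma) \otimes (\tilde{P}^\top \theta(\gamma) \tilde{P}),
\]
after reindexing $\gamma = \alpha^{-1}\beta$ and invoking the definition (\ref{eq:regular}) of $R$. Applying $\Psi(X) = (Z \otimes I_{\hat n})^\top X (Z \otimes I_{\hat n})$ together with the block-diagonalization (\ref{eq:4.5-1}) of $R$, and then projecting to the $\rho$-component, yields
\[
\Psi(P^{\top}P)_\rho = \sum_{\gamma \in \Gamma} \rho(\gamma) \otimes (\tilde{P}^\top \theta(\gamma) \tilde{P}).
\]

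Next I would invoke (\ref{eq:f}) to diagonalize the inner factor. Setting $\tilde{P}' := Y \tilde{P}$ and writing $\tilde{P}'_{(\sigma,s)}$ for the $d_\sigma \times \hat n$ submatrix of $\tilde{P}'$ indexed by the $s$-th copy of $\sigma$ inside $Y\theta(\gamma)Y^\top = \bigoplus_\sigma I_{m_\sigma}\otimes \sigma(\gamma)$, I obtain
\[
\tilde{P}^\top \theta(\gamma) \tilde{P} = \sum_{\sigma \in \tilde{\Gamma}} \sum_{s=1}^{m_\sigma} (\tilde{P}'_{(\sigma,s)})^\top \sigma(\gamma)\, \tilde{P}'_{(\sigma,s)}.
\]
Substituting into the previous display and evaluating at coordinates $((i,k),(j,l))$, the $\gamma$-sum reduces to $\sum_{\gamma \in \Gamma} \rho(\gamma)[i,j]\, \sigma(\gamma)[a,b]$. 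Because $\Gamma$ is absolutely irreducible and each $\rho \in \tilde{\Gamma}$ is chosen to be a real orthogonal matrix representation, Proposition~\ref{prop:schur} applies directly (the conjugation is trivial) and gives $\frac{|\Gamma|}{d_\rho} \delta_{\rho\sigma} \delta_{ia} \delta_{jb}$. All cross terms between inequivalent representations vanish, leaving
\[
\Psi(P^{\top}P)_\rho[(i,k),(j,l)] = \frac{|\Gamma|}{d_\rho} \sum_{t=1}^{m_\rho} \tilde{P}'[(\rho,t,i),k]\, \tilde{P}'[(\rho,t,j),l].
\]

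Finally, the definition (\ref{eq:4.7}) of $w_{\rho,t}$ unpacks to $w_{\rho,t}[(i,k)] = \sqrt{|\Gamma|/d_\rho}\, \tilde{P}'[(\rho,t,i),k]$, so the outer product $w_{\rho,t} w_{\rho,t}^\top$ has $((i,k),(j,l))$-entry exactly $(|\Gamma|/d_\rho)\, \tilde{P}'[(\rho,t,i),k]\, \tilde{P}'[(\rho,t,j),l]$, and summing over $t$ matches the above expression entrywise. The main obstacle I anticipate is the multi-index bookkeeping: one has to keep the two nested Kronecker decompositions (the outer one induced by $Z$ on $\mathbb{R}^\Gamma$, the inner one induced by $Y$ on $\mathbb{R}^d$) consistent, and one has to verify that Schur orthogonality is being applied in its correct unconjugated form for real orthogonal representations of an absolutely irreducible group, so that no hidden complex coefficients intrude.
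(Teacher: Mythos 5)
Your proposal is correct and follows essentially the same line as the paper's proof of Lemma~\ref{lem:1}: you expand $P^{\top}P = \sum_{\gamma}R(\gamma)\otimes(\tilde{P}^\top\theta(\gamma)\tilde{P})$, push through $\Psi$ via (\ref{eq:4.5-1}), insert $Y$ via (\ref{eq:f}), and collapse the $\gamma$-sum with Schur orthogonality. The only cosmetic difference is that you carry out the final step entrywise, whereas the paper keeps everything as tensor/outer-product identities; the underlying identities and the use of absolute irreducibility to make the cross-terms vanish are identical.
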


    \begin{proof}
        By (\ref{eq:regular}) and (\ref{eq:4.4.1}), we have
        \begin{equation} \label{eq:lem1-1}
            P^\top P
            =\sum_{\alpha,\beta \in \Gamma} E_{\alpha,\beta} \otimes ( \tilde{P}^\top \theta(\alpha^{-1} \beta) \tilde{P})
            =\sum_{\gamma \in \Gamma} R(\gamma) \otimes \left( \tilde{P}^\top \theta(\gamma) \tilde{P} \right).
        \end{equation}
Hence
        \begin{align*} 
            \Psi(P^\top P)_\rho &= \sum_{\gamma \in \Gamma} \rho(\gamma) \otimes \left( \tilde{P}^\top \theta(\gamma) \tilde{P} \right) & 
(\text{by (\ref{eq:4.5-1}) and (\ref{eq:lem1-1})})\\
            &=\sum_{\gamma \in \Gamma} (I_{d_\rho}\otimes \tilde{P})^\top \left( \rho(\gamma) \otimes \theta(\gamma) \right) (I_{d_\rho} \otimes \tilde{P}) \\
            &=(I_{d_\rho} \otimes \tilde{P}^\top Y^\top) \left( \sum_{\gamma \in \Gamma} \rho(\gamma) \otimes \bigoplus_{\rho' \in \tilde{\Gamma}}
            \bigoplus_{1 \leq t \leq m_{\rho'}} \rho'(\gamma) \right) ( I_{d_\rho} \otimes Y\tilde{P}) & (\text{by (\ref{eq:f})}). \label{eq:d}
        \end{align*}
Note that, by Schur orthogonality (Proposition~\ref{prop:schur}) and the absolute irreducibility of $\Gamma$, for any  $\rho, \rho' \in \tilde{\Gamma}$,
        \[
            \sum_{\gamma \in \Gamma} \rho(\gamma) \otimes \rho'(\gamma) =
            \begin{cases}
                \frac{|\Gamma|}{d_\rho} \sum_{1 \leq k,l \leq d_\rho} E_{kl} \otimes E_{kl} & (\rho = \rho'), \\
                O & (\rho \neq \rho').
            \end{cases}
        \]
        Hence, we have
        \begin{align*} 
            \sum_{\gamma \in \Gamma} \rho(\gamma) \otimes \bigoplus_{\rho' \in \tilde{\Gamma}} \bigoplus_{1 \leq t \leq m_{\rho'}} \rho'(\gamma)
            &= \frac{|\Gamma|}{d_\rho} \sum_{1\leq t \leq m_{\rho}} \sum_{1 \leq k,l \leq d_\rho} E_{kl} \otimes E_{(\rho,t,k)(\rho,t,l)} \\
&= \frac{|\Gamma|}{d_\rho} \sum_{1\leq t \leq m_{\rho}} \left(\sum_{1 \leq k \leq d_\rho} \bm{e}_k \otimes \bm{e}_{(\rho,t,k)}\right) 
            \left(\sum_{1 \leq l \leq d_\rho} \bm{e}_l \otimes \bm{e}_{(\rho,t,l)}\right)^\top.
        \end{align*}
 Therefore, we  obtain $\Psi(P^{\top} P)=\sum_{1\leq t\leq m_{\rho}} w_{\rho,t} w_{\rho,t}^\top$ with
\[
w_{\rho,t}=(I_{d_\rho} \otimes \tilde{P}^\top Y^\top) \left(\sqrt{\frac{|\Gamma|}{d_\rho}}\sum_{1 \leq k \leq d_\rho} \bm{e}_k \otimes \bm{e}_{(\rho,t,k)}\right)
=\sqrt{\frac{|\Gamma|}{d_\rho}} \sum_{1 \leq k \leq d_\rho} \bm{e}_k \otimes \left(\tilde{P}^\top Y^\top \bm{e}_{(\rho,t,k)}\right)
\]
 as required.
    \end{proof}

    \begin{proof}[Proof of Proposition \ref{prop:range}]
        For any $p \in \mathcal{C}_\theta(V)$, by Lemma~\ref{lem:1}, $\rank \Psi(P^\top P)_\rho \leq m_\rho$.
        So, it is sufficient to prove that for any $X \in \mathcal{K}_{+,\Gamma}$ satisfying $\rank X_\rho \leq m_\rho$ ($\rho \in \tilde{\Gamma}$),
        there exists $q \in \mathcal{C}_\theta(V)$ satisfying $\Psi(Q^\top Q)=X$.

        Since $\rank X_\rho \leq m_\rho$ and $X_\rho$ is positive semidefinite, there exist $v_{\rho,1}, \ldots, v_{\rho,m_\rho} \in \mathbb{R}^{d_\rho \hat{n}}$ such that
        $X_\rho = \sum_{1 \leq t \leq m_\rho} v_{\rho,t} {v_{\rho,t}}^\top$.
\begin{claim}\label{claim:4.4}
There exists a matrix $\tilde{Q}\in \mathbb{R}^{d\times \hat{n}}$ such that
\[
v_{\rho,t}=\sqrt{\frac{|\Gamma|}{d_{\rho}}}
\sum_{1\leq k\leq d_{\rho}} 
\be_k \otimes \left( \tilde{Q}^{\top} Y^{\top} \be_{(\rho,t,k)} \right)
\]
for all $\rho\in \tilde{\Gamma}$ and $t$ with $1\leq t\leq m_{\rho}$.
\end{claim}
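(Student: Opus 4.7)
The plan is to reduce the claim to an explicit linear-algebra construction of $\tilde{Q}$, effectively inverting the parametrization derived in the proof of Lemma~\ref{lem:1}. Each $v_{\rho,t}\in\mathbb{R}^{d_\rho\hat{n}}$ identifies canonically with an element of $\mathbb{R}^{d_\rho}\otimes\mathbb{R}^{\hat{n}}$, so I first decompose it uniquely as $v_{\rho,t}=\sum_{1\leq k\leq d_\rho}\be_k\otimes u_{\rho,t,k}$ for some $u_{\rho,t,k}\in\mathbb{R}^{\hat{n}}$. Comparing tensor components, the statement of the claim is then equivalent to the simpler system
\[
\tilde{Q}^\top Y^\top \be_{(\rho,t,k)} \;=\; \sqrt{d_\rho/|\Gamma|}\,u_{\rho,t,k} \qquad ((\rho,t,k)\in S).
\]

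Next I would exploit that $Y\in O(\mathbb{R}^d)$ and $|S|=\sum_\rho m_\rho d_\rho=d$ by equation (\ref{eq:a}), so the vectors $\{Y^\top \be_{(\rho,t,k)}:(\rho,t,k)\in S\}$ form an orthonormal basis of $\mathbb{R}^d$. A linear map $\mathbb{R}^d\to\mathbb{R}^{\hat{n}}$ is therefore determined uniquely by its values on this basis, and the natural choice
\[
\tilde{Q}^\top \;:=\; \sum_{(\rho,t,k)\in S}\sqrt{d_\rho/|\Gamma|}\,u_{\rho,t,k}\,(Y^\top \be_{(\rho,t,k)})^\top \ \in\ \mathbb{R}^{\hat{n}\times d}
\]
realizes the reduced system: expanding $\tilde{Q}^\top Y^\top\be_{(\rho',t',k')}$ and using orthonormality of the $Y^\top \be_{(\rho,t,k)}$ collapses the sum to $\sqrt{d_{\rho'}/|\Gamma|}\,u_{\rho',t',k'}$, which gives back the claimed identity after substitution.

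There is no serious obstacle here; the content of the claim is precisely the surjectivity of the parametrization of Lemma~\ref{lem:1}, and this is transparent once one notes that $\tilde{Q}\mapsto (v_{\rho,t})_{\rho,t}$ is a linear map between spaces of equal dimension ($d\hat{n}$ entries in $\tilde{Q}$ against $|S|\cdot\hat{n}=d\hat{n}$ components among the $u_{\rho,t,k}$), and the construction above supplies its explicit inverse. The only mild thing worth checking in passing is that no hidden constraint is imposed on the $v_{\rho,t}$ across distinct $(\rho,t)$, which is automatic because the $u_{\rho,t,k}$ are prescribed independently in the formula for $\tilde{Q}^\top$.
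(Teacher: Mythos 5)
Your construction is correct and follows essentially the same route as the paper's proof: decompose each $v_{\rho,t}$ into its $d_\rho$ components in $\mathbb{R}^{\hat n}$, assemble the resulting $d\hat n$ scalars into a $\hat n\times d$ matrix, and undo $Y$. Your version is in fact slightly cleaner than the paper's, since by folding the $\rho$-dependent factor $\sqrt{d_\rho/|\Gamma|}$ into each column individually you avoid the notational sloppiness of writing a single scalar $\sqrt{d_\rho/|\Gamma|}$ in front of the whole matrix when $\rho$ actually varies over the columns.
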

\begin{proof}
We decompose $\mathbb{R}^{d_{\rho}\hat{n}}$ into $d_{\rho}$ copies of $\mathbb{R}^{\hat{n}}$ such that 
$\mathbb{R}^{d_{\rho}\hat{n}}=\mathbb{R}^{\hat{n}}\times \cdots \times \mathbb{R}^{\hat{n}}$.
Then each $v_{\rho,t}\in \mathbb{R}^{d_{\rho}\hat{n}}$ is written by 
$v_{\rho,t}=(v_{\rho,t,1},v_{\rho,t,2},\dots, v_{\rho,t,d_{\rho}})$
for some $v_{\rho,t,1}, \dots, v_{\rho,t,d_{\rho}}\in \mathbb{R}^{\hat{n}}$.
Let $U$ be the matrix obtained by aligning 
$v_{\rho,t,k}$ for all  $(\rho,t,k)\in S$.
Since $|S|=d$, the size of $U$ is $\hat{n}\times d$.
Moreover, $\sum_{1\leq k\leq d_\rho} \be_k\otimes (U\be_{(\rho,t,k)})=v_{\rho,t}$.
Hence, by setting $\tilde{Q}=\sqrt{\frac{d_{\rho}}{|\Gamma|}}Y^{-1} U^{\top}$, we obtain the claimed $\tilde{Q}$.
\end{proof}

Let $\tilde{Q}$ be as shown in Claim~\ref{claim:4.4}, and $Q=\sum_{\gamma \in \Gamma}\bm{e}_\gamma^\top \otimes (\theta(\gamma)\tilde{Q})$.
Ths size of $Q$ is $d \times n$ and it can be identified with a configuration $q:V \rightarrow \mathbb{R}^d$.
by $Q=\sum_{\gamma \in \Gamma}\bm{e}_\gamma^\top \otimes (\theta(\gamma)\tilde{Q})$.
Then $q$ is compatible with $\theta$ and satisfies $\Psi(Q^\top Q)=X$ 
by Lemma~\ref{lem:1} and Claim~\ref{claim:4.4}. 
Since $Q^{\top}Q=\Psi^{-1}(X)$, we have $Q^{\top}Q\in {\cal L}^V$, implying  $Q\bm{1}_V=\bm{0}$. Hence $q \in {\cal C}_\theta(V)$. This completes the proof.
\end{proof}
\if0
\subsection{Examples}
\subsubsection{$C_2$-symmetry} 
We consider the half-turn rotational symmetry in the plane denoted as $C_2$-symmetry.
In this case, $d=2$, $\Gamma=\{e,r\}$, $\theta(e)=I_2$, $\theta(r)=-I_2$ in our notation.
    Let $-:C_2 \rightarrow \mathbb{R}^\times$ be a $1$-dimensional representation defined by $-(e)=1, -(r)=-1$.
    If the trivial representation is denoted as $+$, real irreducible representations are $+$ and $-$.
    $Y$ in Lemma~\ref{lem:1} is identity matrix and the multiplicity of $+$, $-$ in $\theta$ is $0$, $2$ respectively. 
    Let $n=2\hat{n}$ be the number of vertices and $(x_v, y_v)^\top$ be the coordinates of $p_v$ ($1\leq v \leq \hat{n}$). Then, we have 
    \[
        P=\begin{pmatrix} x_1 & \cdots & x_{\hat{n}} & -x_1 & \cdots & -x_{\hat{n}} \\ y_1 & \cdots & y_{\hat{n}} & -y_1 & \cdots & -y_{\hat{n}} \end{pmatrix}.
    \]
    Hence, for a $x$-coordinate vector $\tilde{p}_x = (x_1, \ldots, x_{\hat{n}})^\top$ of representative vertices and 
    a $y$-coordinate vector $\tilde{p}_y = (y_1, \ldots, y_{\hat{n}})^\top$ of representative vertices, by Lemma~\ref{lem:1},
    \[
        \begin{array}{ll}
        \Psi(P^\top P)_+ = O, & \Psi(P^\top P)_- = 2 (\tilde{p}_x \tilde{p}_x^\top + \tilde{p}_y \tilde{p}_y^\top).
        \end{array}
    \]   
\subsubsection{$C_s$-symmetry}
We consider the reflection symmetry with respect to $y$-axis in the plane denoted as $C_s$-symmetry.
In this case, $d=2$, $\Gamma=\{e,s\}$, $\theta(e)=1$, $\theta(s)=\mqty(\dmat{-1,1})$.
    $Y$ in Lemma~\ref{lem:1} is identity matrix and the multiplicity of $+$, $-$ in $\theta$ is $1$, $1$ respectively. Under the same notation as $C_2$-symmetry, we have
    \[
        P=\begin{pmatrix} x_1 & \cdots & x_{\hat{n}} & -x_1 & \cdots & -x_{\hat{n}} \\ y_1 & \cdots & y_{\hat{n}} & y_1 & \cdots & y_{\hat{n}} \end{pmatrix},
    \]
    and by Lemma~\ref{lem:1}, 
    \[
        \begin{array}{ll}
            \Psi(P^\top P)_+ = 2\tilde{p}_y \tilde{p}_y^\top, &
            \Psi(P^\top P)_- = 2\tilde{p}_x \tilde{p}_x^\top.
        \end{array}
    \]
\fi    

\section{Complete Proof of Theorem~\ref{thm:sym}} \label{sec:5}

In the last section we proved Theorem~\ref{thm:sym_nec} for absolutely irreducible $\Gamma$. 
In this section, we give a complete proof of Theorem~\ref{thm:sym_nec}.
    If $\Gamma$ is not absolutely irreducible, the corresponding structure theorem has a slightly more involved form (Proposition~\ref{prop:str2}),
and accordingly we need a further technical analysis of the facial structure of the cone of positive semidefinite $\Gamma$-symmetric Laplacian matrices.

Throughout this section, we distinguish real representations and complex representations, and denote the equivalence classes of real irreducible representations of $\Gamma$ by $\tilde{\Gamma}$.
Again, we may assume that each $\rho \in \tilde{\Gamma}$ is an orthogonal matrix representation $\rho :\Gamma \rightarrow O(\mathbb{R}^{d_\rho})$.
    Let $\mathbb{H}$ be the algebra of quaternions.
    For $x=a+b\mi+c\mj+d\mk \in \mathbb{H}$ ($a,b,c,d \in \mathbb{R}$), its conjugate $x^*$ is  $x^*=a-b\mi-c\mj-d\mk$.
\subsection{Block-diagonalization} \label{sec:5.1}

We first review a basic fact on real representation. 
\cite{B09,murota2010numerical} give more detailed algebraic expositions for applications to semidefinite programming problems.

Let $\rho:\Gamma \rightarrow GL(W)$ be a real irreducible representation. The set of isomorphisms of $W$ commutative to $\rho(\gamma)$ for all $\gamma \in \Gamma$ is denoted by $\Hom(\rho,\rho)$.
$\Hom(\rho,\rho)$ forms a division algebra over $\mathbb{R}$, 
called the {\em commutative algebra}, 
and it is known (by Frobenius theorem) that $\Hom(\rho,\rho)$ is isomorphic to either one of $\mathbb{R}, \mathbb{C}, \mathbb{H}$.
    Accordingly $\rho$ is said to be of {\em real}, {\em complex}, {\em quaternionic} type, respectively. 
For a finite group $\Gamma$, 
the set of real irreducible representations of real (resp., complex, quaternionic) type is denoted by $\tilde{\Gamma}_\mathbb{R}$ (resp., $\tilde{\Gamma}_\mathbb{C}$, $\tilde{\Gamma}_\mathbb{H}$).
Note that $\tilde{\Gamma}=\tilde{\Gamma}_\mathbb{R}\cup \tilde{\Gamma}_\mathbb{C}\cup \tilde{\Gamma}_\mathbb{H}$.

The types of real representations can be also understood in terms of decomposability over $\mathbb{C}$ as follows.
\begin{prop}[{See, e.g.,~\cite[Chapter 13.2]{serre1977linear}}]\label{prop:type}
For a real irreducible representation $\rho\in \tilde{\Gamma}$, 
\begin{itemize}
\item $\rho$ is of real type if and only if it is irreducible over $\mathbb{C}$ (i.e. absolutely irreducible);
\item $\rho$ is of complex type if and only if it is decomposed over $\mathbb{C}$ into the direct sum of a complex irreducible representation
    $\pi$ and its complex conjugate $\overline{\pi}$ such that 
    $\pi$ and $\overline{\pi}$ are not equivalent to each other;
\item $\rho$ is of quaternionic type if and only if it is decomposed over $\mathbb{C}$ into the direct sum of two copies of a self-conjugate complex irreducible representation $\pi$, i.e., $\pi=\overline{\pi}$. 
\end{itemize}
\end{prop}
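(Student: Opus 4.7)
The plan is to pass to the complexification $W_{\mathbb{C}} := W \otimes_{\mathbb{R}} \mathbb{C}$ of the real irreducible representation $\rho : \Gamma \to GL(W)$ and to read off the decomposition of the induced complex representation $\rho_{\mathbb{C}}$ from the size of its commutant. Complex conjugation on the $\mathbb{C}$-factor gives an antilinear $\Gamma$-equivariant involution $\sigma$ on $W_{\mathbb{C}}$ whose fixed set is $W$. The starting identity is the extension-of-scalars formula
\[
\Hom_{\mathbb{C}\Gamma}(W_{\mathbb{C}}, W_{\mathbb{C}}) \;\cong\; \Hom_{\mathbb{R}\Gamma}(W,W) \otimes_{\mathbb{R}} \mathbb{C},
\]
so the complex commutant has $\mathbb{C}$-dimension equal to $\dim_{\mathbb{R}}\Hom(\rho,\rho) \in \{1,2,4\}$ according to the type of $\rho$.

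Combined with Schur's lemma over $\mathbb{C}$, if $\rho_{\mathbb{C}} \cong \bigoplus_i m_i \pi_i$ with the $\pi_i$ distinct complex irreducibles, then $\sum_i m_i^2 = \dim_{\mathbb{C}} \Hom_{\mathbb{C}\Gamma}(W_{\mathbb{C}}, W_{\mathbb{C}}) \in \{1,2,4\}$, which forces the multiplicity pattern $(m_i)$ to be $(1)$, $(1,1)$, or $(2)$ in the real, complex, and quaternionic cases respectively. It then remains to identify each $\pi_i$ as the conjugate of some $\pi_j$ using $\sigma$: for any $\Gamma$-invariant $\mathbb{C}$-subspace $U \subseteq W_{\mathbb{C}}$, the image $\sigma(U)$ is $\Gamma$-invariant and isomorphic as a complex $\Gamma$-representation to $\overline{U}$. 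In the real case $W_{\mathbb{C}}$ itself is irreducible, so $\pi \cong \overline{\pi}$ trivially. In the complex case $\sigma$ must swap the two inequivalent summands $U_1, U_2$: if instead $\sigma(U_1) = U_1$, then $\{x \in U_1 : \sigma(x) = x\}$ would be a nonzero proper $\Gamma$-invariant real subspace of $W$, contradicting irreducibility of $\rho$. Hence $U_2 \cong \overline{U_1}$ with $U_1 \not\cong \overline{U_1}$. In the quaternionic case every irreducible subrepresentation of $W_{\mathbb{C}} \cong \pi \oplus \pi$ is isomorphic to $\pi$, so $\sigma(\pi) \cong \pi$ forces $\pi \cong \overline{\pi}$.

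The main obstacle is the extension-of-scalars identity for the commutant, which I would verify by writing any $\mathbb{C}$-linear $\Gamma$-equivariant endomorphism of $W_{\mathbb{C}}$ uniquely as $\phi_1 \otimes 1 + \phi_2 \otimes i$ with $\phi_j \in \Hom_{\mathbb{R}\Gamma}(W,W)$; this is standard but requires care in tracking $\Gamma$-equivariance under the $\mathbb{C}$-linear extension. A secondary delicate point is separating the quaternionic case from a hypothetical ``real form'' of $\pi \oplus \pi$; here I would make the picture concrete by choosing $J \in \Hom(\rho,\rho)$ with $J^2 = -I$, viewing $W$ itself as a complex $\Gamma$-module $W_J$ (necessarily irreducible by Schur applied to the centralizer $\mathbb{R}\langle 1, J\rangle \cong \mathbb{C}$), and using a second quaternion $K$ anti-commuting with $J$ to exhibit an isomorphism $W_J \cong \overline{W_J}$, which both confirms $\pi \cong \overline{\pi}$ and explains why $W$ cannot descend to a single copy of $\pi$.
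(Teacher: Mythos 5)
The paper does not prove Proposition~\ref{prop:type}; it cites it from Serre~\cite[Chapter~13.2]{serre1977linear}, so there is no in-paper argument to compare against. Your plan --- complexify, compute the $\mathbb{C}$-dimension of the commutant via extension of scalars, and then use the antilinear involution $\sigma$ to identify conjugate constituents --- is sound and is essentially the standard route, close in spirit to the Frobenius--Schur treatment.

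One step is stated too strongly. From $\sum_i m_i^2 \in \{1,2,4\}$ alone the pattern is not forced in the quaternionic case: $\sum_i m_i^2 = 4$ is also satisfied by $(m_i)=(1,1,1,1)$, i.e.\ four pairwise inequivalent constituents. You need an extra ingredient to exclude this. Your own $\sigma$-argument already contains it if pushed a bit further: $\sigma$ permutes the isotypic components of $W_{\mathbb{C}}$, and the sum of any $\sigma$-orbit of isotypic components is a $\sigma$-stable $\Gamma$-invariant subspace, whose real fixed set would be a nonzero proper $\mathbb{R}\Gamma$-submodule of $W$ unless that sum is all of $W_{\mathbb{C}}$. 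This forces $W_{\mathbb{C}}$ to have either a single isotypic component (so $(1)$ or $(2)$) or exactly two, swapped by $\sigma$ (so $(1,1)$ since $2m^2\le 4$), ruling out $(1,1,1,1)$. Alternatively, and more in line with the extension-of-scalars theme you start from, the commutant is not merely a $\mathbb{C}$-vector space but a $\mathbb{C}$-algebra, and $\mathbb{H}\otimes_{\mathbb{R}}\mathbb{C}\cong M_2(\mathbb{C})$ is simple, so the Wedderburn decomposition $\bigoplus_i M_{m_i}(\mathbb{C})$ has a single factor and $m_1=2$. Your $J,K$ construction in the last paragraph does in fact also close this gap (it exhibits $W_{\mathbb{C}}\cong W_J\oplus\overline{W_J}\cong \pi\oplus\pi$ directly), so the plan is complete --- but the sentence ``forces the multiplicity pattern'' should be downgraded to ``restricts'' and the exclusion of $(1,1,1,1)$ made explicit. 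The parenthetical ``necessarily irreducible by Schur applied to the centralizer'' is also slightly off; irreducibility of $W_J$ over $\mathbb{C}$ follows simply because a $\mathbb{C}\Gamma$-submodule of $W_J$ is in particular an $\mathbb{R}\Gamma$-submodule of $W$.
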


A trivial representation $\tri$ is always of real type, and hence 
$\tilde{\Gamma}_\mathbb{R}\neq \emptyset$. 
An absolutely irreducible group we have studied in the last section is the case when $\tilde{\Gamma}_\mathbb{C}=\tilde{\Gamma}_\mathbb{H}=\emptyset$.
A simplest example of a non-absolutely irreducible group is the cyclic  group $C_n$ of order $n$ with $n \geq 3$, which has real irreducible representation of complex type of degree $2$.
 Another fundamental example is the quaternion group $Q_8$ whose real representations consist of four real type representations of degree $1$ and one quaternionic type representations of degree $4$.

    To describe the structure theorem, we introduce two linear matrix spaces. 
For a complex number $a+b\mi\in \mathbb{C}$ with $a,b\in \mathbb{R}$, let 
   \[
        C(a+b\mi) = \begin{pmatrix} a& -b \\ b & a \end{pmatrix}.
    \]
The map $C$ can be extended to $C:\mathbb{C}^{n\times n}\rightarrow \mathbb{R}^{2n\times 2n}$ by applying it entry-wise,
i.e., 
\[
C(Z)=\begin{pmatrix}
            C(z_{11}) & \cdots & C(z_{1n}) \\
            \vdots & \ddots & \vdots \\
            C(z_{n1}) & \cdots & C(z_{nn})
        \end{pmatrix}
\]
for $Z=(z_{ij})\in \mathbb{C}^{n\times n}$.
The space of all real expressions of $n \times n$ complex matrices is denoted by $\mathcal{C}^{2n}$, i.e., $\mathcal{C}^{2n}=C(\mathbb{C}^{n\times n})$.
Similarly, for $a+b\mi+c\mj+d\mk \in \mathbb{H}$ with $a, b, c, d \in \mathbb{R}$, let
\[
        H(a+b\mi+c\mj+d\mk) = \begin{pmatrix} a & -b & c & -d \\ b & a & d & c \\ -c & -d & a & b \\ d & -c & -b & a \end{pmatrix},
    \]
 and extend it over $\mathbb{H}^{n\times n}$ by 
\[
H(X)=\begin{pmatrix}
            H(x_{11}) & \cdots &H(x_{1n}) \\
            \vdots & \ddots & \vdots \\
            H(x_{n1}) & \cdots & H(x_{nn})
        \end{pmatrix}
\]
for $X=(x_{ij})\in \mathbb{H}^{n\times n}$.
Let ${\cal H}^{4n}=H(\mathbb{H}^{n\times n})$.

  Note that $C$ and $H$ defined above are commutative to matrix multiplication, and $C(X^*)=C(X)^\top$ and $H(Y^*)=H(Y) ^\top$ hold, where $^*$ denotes conjugate transpose.
    For $A \in \mathcal{C}^{2n}$ (resp., $A \in {\cal H}^{4n}$), the matrix $X \in \mathbb{C}^{n \times n}$  satisfying $C(X)=A$ (reps., $X \in \mathbb{H}^{n \times n}$ satisfying $H(X)=A$) is denoted by $C^{-1}(A)$ (resp., $H^{-1}(A)$).

    For each $\rho \in \tilde{\Gamma}$, define a linear space $\mathcal{K}_\rho$ by 
    \[
        \mathcal{K}_\rho = 
        \begin{cases}
            \mathcal{L}^{V/\Gamma} & (\rho=\tri) \\
            \mathcal{S}^{d_\rho \hat{n}} & (\rho \in \tilde{\Gamma}_\mathbb{R} \setminus \{\tri\}) \\
            \mathcal{C}^{d_\rho \hat{n}} \cap \mathcal{S}^{d_\rho \hat{n}} & (\rho \in \tilde{\Gamma}_\mathbb{C})  \\
            \mathcal{H}^{d_\rho \hat{n}} \cap \mathcal{S}^{d_\rho \hat{n}} & (\rho \in \tilde{\Gamma}_\mathbb{H})
        \end{cases},
    \]
and let 
    \begin{equation}\label{eq:K_G}
        \mathcal{K}_\Gamma=\bigoplus_{\mathbb{F}\in\{ \mathbb{R}, \mathbb{C}, \mathbb{H}\}} \bigoplus_{\rho \in \tilde{\Gamma}_\mathbb{F}}
        I_{ \frac{d_\rho}{\dim \mathbb{F}}} \otimes \mathcal{K}_\rho,
    \end{equation}
where $\dim \mathbb{R}, \dim \mathbb{C}, \dim \mathbb{H} = 1, 2, 4$, respectively. Then we have the following structure theorem.
Recall that $\{E_{u,v}: u,v\in V/\Gamma\}$ stands for the standard basis of $\mathbb{R}^{V/\Gamma\times V/\Gamma}$.   

    \begin{prop}\label{prop:str2}
        There exists an orthogonal transformation 
$\Psi:\mathbb{R}^{V\times V} \rightarrow \mathbb{R}^{V\times V}$ satisfying $\Psi((\mathcal{L}^V)^\Gamma) =\mathcal{K}_\Gamma$ and 
        \[
            \Psi(R(\gamma)\otimes E_{u,v}) = \bigoplus_{\mathbb{F}\in  \{\mathbb{R}, \mathbb{C}, \mathbb{H}\}} \bigoplus_{\rho \in \tilde{\Gamma}_\mathbb{F}} 
            I_{ \frac{d_\rho}{\dim \mathbb{F}}} \otimes \rho(\gamma) \otimes E_{u,v}.
        \]
    \end{prop}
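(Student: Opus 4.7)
The plan is to mimic the proof of Proposition~\ref{prop:str} but upgrade the regular representation decomposition using the full real Wedderburn--Artin structure. First, I would obtain the block-diagonalization of the right regular representation as follows. By Wedderburn--Artin together with Frobenius's classification of finite-dimensional real division algebras, $\mathbb{R}[\Gamma] \cong \prod_{\mathbb{F}} \prod_{\rho \in \tilde{\Gamma}_\mathbb{F}} M_{m_\rho}(\mathbb{F})$ with $m_\rho = d_\rho/\dim \mathbb{F}$, and the identity $\sum_\rho d_\rho^2 / \dim \mathbb{F}_\rho = |\Gamma|$ matches $\dim \mathbb{R}^\Gamma$. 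Since $R$ is an orthogonal representation and equivalent orthogonal representations are orthogonally conjugate, this yields $Z \in O(\mathbb{R}^\Gamma)$ with
\[
Z^\top R(\gamma) Z = \bigoplus_{\mathbb{F}} \bigoplus_{\rho \in \tilde{\Gamma}_\mathbb{F}} I_{d_\rho/\dim \mathbb{F}} \otimes \rho(\gamma) \qquad (\gamma \in \Gamma).
\]
Setting $\Psi(X) := (Z \otimes I_{\hat n})^\top X (Z \otimes I_{\hat n})$ makes $\Psi$ orthogonal, and the displayed formula for $\Psi(R(\gamma) \otimes E_{u,v})$ follows immediately from $(Z^\top A Z) \otimes B = (Z \otimes I)^\top (A \otimes B)(Z \otimes I)$.

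For the image equality $\Psi((\mathcal{L}^V)^\Gamma) = \mathcal{K}_\Gamma$, I would reuse the description $(\mathcal{L}^V)^\Gamma = \mathcal{L}^V \cap (\mathcal{T} \otimes \mathbb{R}^{V/\Gamma \times V/\Gamma})$ from the proof of Claim~\ref{claim:str02}, where $\mathcal{T} = \operatorname{span}\{R(\gamma) : \gamma \in \Gamma\}$. By the double-centralizer theorem, $\operatorname{span}\{\rho(\gamma) : \gamma \in \Gamma\}$ equals the centralizer of $\mathbb{F}_\rho$ in $M_{d_\rho}(\mathbb{R})$, namely $M_{d_\rho}(\mathbb{R})$, $\mathcal{C}^{d_\rho}$, or $\mathcal{H}^{d_\rho}$ according to the type of $\rho$. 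Hence $\Psi$ carries $\mathcal{T} \otimes \mathbb{R}^{V/\Gamma \times V/\Gamma}$ into $\bigoplus_\rho I_{m_\rho} \otimes \mathcal{A}_\rho$ with $\mathcal{A}_\rho$ the appropriate matrix algebra, and intersecting with $\mathcal{S}^V$ yields the symmetric parts $\mathcal{S}^{d_\rho \hat n}$, $\mathcal{C}^{d_\rho \hat n} \cap \mathcal{S}^{d_\rho \hat n}$, or $\mathcal{H}^{d_\rho \hat n} \cap \mathcal{S}^{d_\rho \hat n}$, matching the definition of $\mathcal{K}_\rho$. Finally, exactly as in the proof of Claim~\ref{claim:str02}, the Laplacian condition $X\bm{1}_V = 0$ translates to $\Psi(X)_\tri \bm{1}_{V/\Gamma} = 0$ alone, because $\bm{1}_V$ spans the trivial isotypic component of $\mathbb{R}^V$; this cuts out $\mathcal{L}^{V/\Gamma}$ in the trivial block and leaves the remaining blocks unconstrained.

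The main technical obstacle is the bookkeeping for the complex and quaternionic blocks: the naive image $\mathcal{C}^{d_\rho} \otimes \mathbb{R}^{V/\Gamma \times V/\Gamma}$ arranges its $2 \times 2$ real-of-complex blocks along the original $d_\rho$-indices, whereas $\mathcal{C}^{d_\rho \hat n}$ (as defined via entry-wise application of $C$) arranges them along the finest consecutive indices, and analogously for $\mathcal{H}^{d_\rho} \otimes \mathbb{R}^{V/\Gamma \times V/\Gamma}$ versus $\mathcal{H}^{d_\rho \hat n}$. I would absorb the corresponding fixed row-column permutation (orthogonal, block-diagonal on the isotypic decomposition, and commuting with the $I_{m_\rho}$-factor) into the definition of $\Psi$ at the outset; after this reinterpretation, both the formula for $\Psi(R(\gamma) \otimes E_{u,v})$ and the image equality $\Psi((\mathcal{L}^V)^\Gamma) = \mathcal{K}_\Gamma$ hold exactly as stated, and a dimension check using the standard counts of $\mathbb{R}$-, $\mathbb{C}$-, and $\mathbb{H}$-Hermitian matrices closes the argument.
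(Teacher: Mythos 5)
Your plan matches the paper's: block-diagonalize the right regular representation over $\mathbb{R}$ with multiplicities $d_\rho/\dim\mathbb{F}$, set $\Psi(X)=(Z\otimes I_{\hat n})^\top X(Z\otimes I_{\hat n})$, and reduce the Laplacian constraint to the trivial block via the argument of Claim~\ref{claim:str02}. The one genuine divergence is how you identify the image of $\mathcal{T}=\operatorname{span}\{R(\gamma)\}$: the paper uses the inclusion $\psi(\mathcal{T})\subseteq\bigoplus I_{d_\rho/\dim\mathbb{F}}\otimes\mathcal{K}_\rho'$ (which requires Lemma~\ref{lem:good} to normalize $\rho$ into $\mathcal{C}^{d_\rho}$ or $\mathcal{H}^{d_\rho}$) together with the dimension count (\ref{eq:5-1}) derived from Schur orthogonality, whereas you invoke Wedderburn--Artin plus the double centralizer theorem to conclude directly that $\operatorname{span}\{\rho(\gamma)\}$ is the full centralizer $M_{d_\rho}(\mathbb{R})$, $\mathcal{C}^{d_\rho}$, or $\mathcal{H}^{d_\rho}$. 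Both routes are valid; yours is more structural and avoids the explicit dimension bookkeeping, but still implicitly needs the basis normalization of Lemma~\ref{lem:good} (since ``the centralizer of $\mathbb{F}_\rho$'' is only literally $\mathcal{C}^{d_\rho}$ or $\mathcal{H}^{d_\rho}$ after choosing a basis in which $\mathbb{F}_\rho$ acts block-diagonally by $C(\cdot)$ or $H(\cdot)$), so it is worth stating that dependency. Your final paragraph correctly flags an index-ordering subtlety that the paper glosses over: $\mathcal{C}^{d_\rho}\otimes\mathbb{R}^{\hat n\times\hat n}$ in the tensor ordering is a permutation of $\mathcal{C}^{d_\rho\hat n}$ in the fine ordering. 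Be careful, though, about the claim that after absorbing that permutation into $\Psi$ ``both the formula for $\Psi(R(\gamma)\otimes E_{u,v})$ and the image equality hold exactly as stated'': once $\Psi$ includes the permutation, the right-hand side $\bigoplus I_{d_\rho/\dim\mathbb{F}}\otimes\rho(\gamma)\otimes E_{u,v}$ must itself be read with the tensor done inside the $C$/$H$ encoding (that is, as $C(\pi(\gamma)\otimes E_{u,v})$ rather than $C(\pi(\gamma))\otimes E_{u,v}$); the two sides cannot simultaneously hold in the naive tensor convention. This is a convention issue shared with the paper's own statement, so it does not invalidate your proof, but a careful writeup should fix a single reading of $\otimes$ in the complex and quaternionic blocks and stick with it throughout.
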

    In the proof of Proposition~\ref{prop:str2}, we use the fact that real irreducible representations of complex and quaternionic type can be represented by matrices in ${\cal C}^{d_{\rho}}$ and ${\cal H}^{d_\rho}$.
    This fact follows from the following lemma.
    \begin{lemma} \label{lem:good}
        \begin{itemize}
            \item[(i)] For each real irreducible representation $\rho:\Gamma \rightarrow GL(V)$ of complex type, 
            there exists a basis $B$ of $V$ such that the matrix expression of $\rho(\gamma)$ with respect to $B$ satisfies $\rho(\gamma) \in \mathcal{C}^{d_\rho}$.
            \item[(ii)] For each real irreducible representation $\rho:\Gamma \rightarrow GL(V)$ of quaternionic type,
            there exists a basis $B$ of $V$ such that the matrix expression of $\rho(\gamma)$ with respect to $B$ satisfies $\rho(\gamma) \in \mathcal{H}^{d_\rho}$.
        \end{itemize}
    \end{lemma}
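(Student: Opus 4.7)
The plan is to build the required basis of $V$ from the commutant $\Hom(\rho,\rho)$, which by hypothesis is isomorphic to $\mathbb{C}$ in case (i) and to $\mathbb{H}$ in case (ii). In both cases the natural action of $\Hom(\rho,\rho)$ endows $V$ with the structure of a $\mathbb{C}$- or $\mathbb{H}$-module compatible with $\rho$, and $\rho(\gamma)$ becomes linear over this module structure. A suitably ordered real basis then exhibits $\rho(\gamma)$ in the block form prescribed by $C$ or $H$.

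For (i), fix an isomorphism $\Hom(\rho,\rho)\cong\mathbb{C}$ and let $J\in\Hom(\rho,\rho)$ be the image of $\mi$, so $J^2=-\id$ and $J\rho(\gamma)=\rho(\gamma)J$ for every $\gamma$. Define $(a+b\mi)v:=av+bJv$ to make $V$ a complex vector space of dimension $n:=d_\rho/2$, and observe that each $\rho(\gamma)$ is $\mathbb{C}$-linear. Choose any $\mathbb{C}$-basis $v_1,\dots,v_n$ and take the real basis
\[
B=(v_1,Jv_1,v_2,Jv_2,\dots,v_n,Jv_n).
\]
Let $X(\gamma)\in\mathbb{C}^{n\times n}$ be the matrix of $\rho(\gamma)$ as a $\mathbb{C}$-linear map with respect to $v_1,\dots,v_n$, and write $X(\gamma)_{lk}=a_{lk}+b_{lk}\mi$. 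Expanding $\rho(\gamma)(v_k)=\sum_l(a_{lk}+b_{lk}\mi)v_l$ and $\rho(\gamma)(Jv_k)=J\rho(\gamma)(v_k)$ in $B$ shows that the $(l,k)$-th $2\times 2$ block of the real matrix is $\left(\begin{smallmatrix}a_{lk}&-b_{lk}\\b_{lk}&a_{lk}\end{smallmatrix}\right)=C(X(\gamma)_{lk})$, so $\rho(\gamma)\in \mathcal{C}^{d_\rho}$.

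For (ii), fix an isomorphism $\Hom(\rho,\rho)\cong\mathbb{H}$ and let $J_1,J_2,J_3\in\Hom(\rho,\rho)$ correspond to $\mi,\mj,\mk$. These satisfy the quaternion relations and commute with every $\rho(\gamma)$. To make the subsequent matrix multiplication the standard ``matrix times column vector,'' define a \emph{right} $\mathbb{H}$-module structure on $V$ by $v\cdot(a+b\mi+c\mj+d\mk):=av+bJ_1v+cJ_2v+dJ_3v$, giving $V$ the structure of a free right $\mathbb{H}$-module of rank $n:=d_\rho/4$; each $\rho(\gamma)$ is then right-$\mathbb{H}$-linear. Pick a right $\mathbb{H}$-basis $v_1,\dots,v_n$ and form the real basis
\[
B=(v_1,\ v_1\cdot\mi,\ -v_1\cdot\mj,\ v_1\cdot\mk,\ \dots,\ v_n,\ v_n\cdot\mi,\ -v_n\cdot\mj,\ v_n\cdot\mk),
\]
where the minus sign in front of $v_k\cdot\mj$ is the sign needed to match the asymmetric sign pattern of $H$. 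Let $Y(\gamma)\in\mathbb{H}^{n\times n}$ be determined by $\rho(\gamma)(v_k)=\sum_l v_l\cdot Y(\gamma)_{lk}$. Writing $Y(\gamma)_{lk}=a+b\mi+c\mj+d\mk$ and computing $Y(\gamma)_{lk}\cdot 1$, $Y(\gamma)_{lk}\cdot\mi$, $Y(\gamma)_{lk}\cdot\mj$, $Y(\gamma)_{lk}\cdot\mk$ using the quaternion multiplication table, then reading off coordinates in $B$, produces the four columns of the $(l,k)$-th $4\times 4$ block, each equal to the corresponding column of $H(Y(\gamma)_{lk})$. Thus the real matrix of $\rho(\gamma)$ in $B$ is $H(Y(\gamma))\in\mathcal{H}^{d_\rho}$.

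The main obstacle is purely in case (ii): the encoding $H$ fixed in the paper has a nontrivial sign pattern, so neither the obvious basis $(v_k,v_k\mi,v_k\mj,v_k\mk)$ nor the left-module version $(v_k,\mi v_k,\mj v_k,\mk v_k)$ directly yields matrices of the form $H(Y)$. Identifying the correct convention (right-module structure together with the sign swap on the $\mj$-component) is what allows the block computation to close; once this convention is in place, everything else is a routine verification from the quaternion multiplication table.
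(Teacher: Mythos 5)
Your overall approach matches the paper's: pull commuting operators out of $\Hom(\rho,\rho)$, use them to give $V$ a $\mathbb{C}$- or $\mathbb{H}$-module structure, pick a module basis, and read off the block form. Case (i) is fully correct and, if anything, more careful than the paper's.

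Case (ii), however, has a genuine (though fixable) slip in the right-module definition. You declare a right $\mathbb{H}$-module structure by $v\cdot q := \phi(q)v$, where $\phi:\mathbb{H}\to\Hom(\rho,\rho)$ is the fixed algebra isomorphism sending $\mi,\mj,\mk$ to $J_1,J_2,J_3$. But this does \emph{not} satisfy the right-module axiom: $(v\cdot\mi)\cdot\mj = J_2J_1v = -J_3v$, whereas $v\cdot(\mi\mj)=v\cdot\mk=J_3v$. As written, $v\cdot q := \phi(q)v$ is a left action of $\mathbb{H}$, equivalently a right action of $\mathbb{H}^{\mathrm{op}}$. This is not purely cosmetic: if you then write $\rho(\gamma)v_k=\sum_l v_l\cdot Y_{lk}$ and compute the column for $v_k\cdot\mi$, you get $\sum_l\phi(\mi)\phi(Y_{lk})v_l=\sum_l\phi(\mi Y_{lk})v_l$, i.e.\ the quaternion entry multiplies by $\mi$ on the \emph{left}; but the block form $H(Y_{lk})$ requires the entry to multiply by $\mi$ on the \emph{right} ($Y_{lk}\mi$). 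The $2\times 2$ bottom-right block of the $4\times 4$ block then comes out with the wrong signs. The fix is to pass through quaternion conjugation: define $v\cdot q:=\phi(\overline{q})v$, or equivalently replace $J_1,J_2,J_3$ by an anti-homomorphic triple ($J_1J_2=-J_3$, $J_2J_3=-J_1$, $J_3J_1=-J_2$), which exists since $\Hom(\rho,\rho)\cong\mathbb{H}$. With that convention and your basis $(v_l,\ J_1v_l,\ -J_2v_l,\ J_3v_l)$, the block computation does match $H(Y_{lk})$ column by column. (For the record, $H(q)$ is the matrix of left multiplication by $q$ on $\mathbb{H}$ in the real basis $(1,\mi,-\mj,\mk)$, which is precisely what the anti-homomorphism/right-module convention produces.) The paper itself waves at (ii) with ``identical to (i),'' so your attempt at a real verification is welcome, but the sign bookkeeping is exactly where the danger lies and needs to be pinned down.
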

    \begin{proof}

We prove (i). As the commutative algebra $\Hom (\rho,\rho)$ of $\rho$ is isomorphic to $\mathbb{C}$, there exists $J \in \Hom (\rho, \rho)$ satisfying $J^2=-\id_V$.
            Let $B=\emptyset$. 
            We pick any non-zero $v_1$ from $V$ and add $v_1$ and $J(v_1)$ to $B$. We then pick $v_2$ from the complement of ${\rm span}B$ and add $v_2$ and $J(v_2)$ to $B$. We keep the procedure until we get  
$B=\{v_1, J(v_1), v_2, J(v_2), \ldots, v_{d_{\rho}/2}, J( v_{d_{\rho}/2})\}$. By $J^2=-\id_V$, one can easily check that $B=\{v_1, J(v_1), v_2, J(v_2), \ldots \}$ is a basis of $V$.
            As $J$ is commutative to $\rho(\gamma)$, we can deduce that matrix expression of $\rho(\gamma)$ with respect to $B$ have the desired form.

The proof for (ii) is identical.
%
    \end{proof}

In view of Lemma~\ref{lem:good}, we may suppose that 
each $\rho \in \tilde{\Gamma}_\mathbb{C}$ (resp., $\rho\in \tilde{\Gamma}_\mathbb{H}$) is an orthogonal matrix representation with $\rho \in {\cal C}^{d_{\rho}}$ (resp., $\rho \in {\cal H}^{d_{\rho}}$).
%

We now consider the decomposition of the regular representation $R$ into the real irreducible representations. 
        Recall first that the multiplicity of a complex irreducible representation $\pi$ in $R$ is equal to the degree of $\pi$.
A representation $\rho \in \tilde{\Gamma}_\mathbb{R}$ of real type is absolutely irreducible, and hence its multiplicity in $R$ is its degree $d_\rho$.
A representation $\rho \in \tilde{\Gamma}_\mathbb{C}$ of complex type  the direcist sum of a complex irreducible representation $\pi$ and its conjugate $\overline{\pi}$. 
Hence the degree of $\pi$ is $\frac{d_\rho}{2}$, 
and the multiplicity of $\rho$ in $R$ is $\frac{d_\rho}{2}$.
A representation $\rho \in \tilde{\Gamma}_\mathbb{H}$ of quaternionic type decomposes into two copies of a self-conjugate complex irreducible representation $\pi$.
Hence the degree of $\pi$ is $\frac{d_\rho}{2}$,
and the multiplicity of $\rho$ in $R$ is $\frac{d_\rho}{4}$.

Therefore, there exists an orthogonal matrix $Z \in O(\mathbb{R}^\Gamma)$ satisfying 
        \begin{equation} \label{eq:5-0}
            Z^\top R(\gamma) Z = \bigoplus_{\mathbb{F}\in\{ \mathbb{R}, \mathbb{C}, \mathbb{H}\}} \bigoplus_{\rho \in \tilde{\Gamma}_\mathbb{F}} 
            I_{ \frac{d_\rho}{\dim \mathbb{F}}} \otimes \rho(\gamma).
        \end{equation}
 Note also that by comparing the degree of $R$ and that of its decomposition into real irreducible representations, we have
        \begin{equation} \label{eq:5-1}
            \sum_{\mathbb{F} \in\{ \mathbb{R}, \mathbb{C}, \mathbb{H}\}}
            \sum_{\rho \in \tilde{\Gamma}_\mathbb{F}} \frac{{d_\rho}^2}{\dim \mathbb{F}} = |\Gamma|.
        \end{equation}       

        For each $\rho \in \tilde{\Gamma}$, let ${\cal K}'_\rho$ be a linear space defined by
        \[
            {\cal K}'_\rho =
            \begin{cases}
                \mathbb{R}^{d_\rho\times d_{\rho}} & (\rho \in \tilde{\Gamma}_\mathbb{R}) \\
                {\cal C}^{d_\rho} & (\rho \in \tilde{\Gamma}_\mathbb{C})\\
                {\cal H}^{d_\rho} & (\rho \in \tilde{\Gamma}_\mathbb{H})
            \end{cases}.
        \]
(\ref{eq:5-0}) implies that, by an orthogonal transformation, $R(\gamma)$ is mapped to $\bigoplus_{\mathbb{F}\in \{\mathbb{R},\mathbb{C},\mathbb{H}\}}\bigoplus_{\rho\in \tilde{\Gamma}_{\mathbb{F}}} I_{\frac{d_{\rho}}{\dim \mathbb{F}}}\otimes {\cal K}_{\rho}'$.
If we set ${\cal T} = {\rm span}\{R(\gamma):\gamma \in \Gamma\}$,
then a map $\psi:{\cal T} \rightarrow \bigoplus_{\mathbb{F} \in \{ \mathbb{R}, \mathbb{C}, \mathbb{H} \}} \bigoplus_{\rho \in \tilde{\Gamma}_\mathbb{F}} 
        I_{ \frac{d_\rho}{\dim \mathbb{F}}} \otimes {\cal K}'_\rho$ defined by  $\psi(X)= Z^\top X Z$ for $X\in \mathbb{R}^{\Gamma\times \Gamma}$ is a linear isomorphism between ${\cal T}$ and $\bigoplus_{\mathbb{F}\in \{\mathbb{R},\mathbb{C},\mathbb{H}\}}\bigoplus_{\rho\in \tilde{\Gamma}_{\mathbb{F}}} I_{\frac{d_{\rho}}{\dim \mathbb{F}}}\otimes {\cal K}_{\rho}'$.

Based on $\psi$, we construct an orthogonal transformation $\Psi: \mathbb{R}^{V\times V} \rightarrow \mathbb{R}^{V\times V}$ by 
$X \mapsto (Z \otimes I_{\hat{n}})X(Z \otimes I_{\hat{n}})$ for 
$X \in \mathbb{R}^{V\times V}=\mathbb{R}^{\Gamma\times \Gamma}\otimes \mathbb{R}^{V/\Gamma\times V/\Gamma}$.
Then the restriction of $\Psi$ to $({\cal L}^V)^{\Gamma}$ would be a map claimed in Proposition~\ref{prop:str2}.
Since the proof is identical to that of Proposition~\ref{prop:str}, we omit the detailed description.
\subsection{Extending Proposition~\ref{prop:range}}
    In this subsection, we prove Proposition~\ref{prop:range2}, which extends Proposition~\ref{prop:range} to general finite groups.
Recall that the projection of $X \in \mathcal{K}_\Gamma$ to $\mathcal{K}_\rho$ is denoted by $X_\rho$.
Given a point group $\theta:\Gamma \rightarrow O(\mathbb{R}^d)$, the multiplicity  of $\rho \in \tilde{\Gamma}$ in $\theta$ is denoted by $m_\rho$.
    We have
    \begin{equation} \label{eq:5-2}
        d = \sum_{\rho \in \tilde{\Gamma}}d_\rho m_\rho.
    \end{equation}
Let ${\cal K}_{+,\Gamma}=\Psi((\mathcal{L}^V)_+^\Gamma)$. 
The following proposition extends Proposition~\ref{prop:range} to general groups.
    \begin{prop} \label{prop:range2}
        For a map $f:\mathcal{C}_\theta(V) \rightarrow \mathcal{K}_{+,\Gamma};q\mapsto \Psi(Q^\top Q)$,
        \[
            f(\mathcal{C}_\theta(V)) = \left\{ X \in \mathcal{K}_{+,\Gamma} 
            : \rank X_\rho \leq \dim \mathbb{F} \cdot m_\rho ~ (\rho \in \tilde{\Gamma}_\mathbb{F}, \mathbb{F}\in \{\mathbb{R}, \mathbb{C}, \mathbb{H}\}) \right\}.
        \]
    \end{prop}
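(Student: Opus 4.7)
The plan is to parallel the proof of Proposition~\ref{prop:range}, replacing the Schur orthogonality step with a type-aware version that handles $\tilde{\Gamma}_\mathbb{C}$ and $\tilde{\Gamma}_\mathbb{H}$. First I would extend Lemma~\ref{lem:1}: using (\ref{eq:5-0}) together with (\ref{eq:f}) and the same derivation as in Lemma~\ref{lem:1}, one obtains
\[
\Psi(P^\top P)_\rho = (I_{d_\rho}\otimes \tilde{P}^\top Y^\top)\Bigl(\sum_{\gamma\in\Gamma}\rho(\gamma)\otimes \bigoplus_{\rho'\in\tilde{\Gamma}}\bigoplus_{1\le t\le m_{\rho'}}\rho'(\gamma)\Bigr)(I_{d_\rho}\otimes Y\tilde{P}),
\]
and the cross terms $\sum_\gamma \rho(\gamma)\otimes\rho'(\gamma)$ vanish for $\rho\ne\rho'$ by Schur's lemma applied to non-equivalent real irreducible representations. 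The new ingredient is the evaluation of $\sum_\gamma \rho(\gamma)\otimes\rho(\gamma)$ for $\rho \in \tilde{\Gamma}_\mathbb{F}$ with $\mathbb{F}\in\{\mathbb{C},\mathbb{H}\}$. Using Lemma~\ref{lem:good} to write $\rho=C(\pi)$ or $\rho=H(\pi)$ for a complex irreducible representation $\pi$ of degree $d_\rho/\dim\mathbb{F}$ and applying Proposition~\ref{prop:schur} to $\pi$ and $\overline{\pi}$, a direct computation yields
\[
\sum_{\gamma\in\Gamma}\rho(\gamma)\otimes\rho(\gamma) = \tfrac{|\Gamma|}{d_\rho}\sum_{s=1}^{\dim\mathbb{F}} \mathrm{vec}(J_{\rho,s})\,\mathrm{vec}(J_{\rho,s})^\top,
\]
where $\{J_{\rho,1},\ldots,J_{\rho,\dim\mathbb{F}}\}$ is a fixed real orthogonal basis of the commutant $\Hom_\Gamma(\rho,\rho) \cong \mathbb{F}$ (namely $I_{d_\rho}$ in the real case; $I_{d_\rho}$ together with the block-diagonal complex structure in the complex case; and the four block quaternionic units in the quaternionic case). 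Substituting back, one obtains $\Psi(P^\top P)_\rho = \sum_{t=1}^{m_\rho}\sum_{s=1}^{\dim\mathbb{F}} w_{\rho,t,s}\,w_{\rho,t,s}^\top$ for explicit vectors $w_{\rho,t,s}\in\mathbb{R}^{d_\rho \hat{n}}$ depending linearly on $\tilde{P}$, which gives $\rank \Psi(P^\top P)_\rho \le \dim\mathbb{F}\cdot m_\rho$ and establishes the forward inclusion.

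For the reverse inclusion, given $X\in\mathcal{K}_{+,\Gamma}$ with $\rank X_\rho\le \dim\mathbb{F}\cdot m_\rho$ for every $\rho\in\tilde{\Gamma}_\mathbb{F}$, the construction mirrors Claim~\ref{claim:4.4}. The key observation is that each $X_\rho \in \mathcal{K}_\rho$ corresponds via $C^{-1}$ or $H^{-1}$ to a Hermitian positive semidefinite matrix over $\mathbb{F}$ of size $d_\rho \hat{n}/\dim\mathbb{F}$, and $\rank X_\rho = \dim\mathbb{F}\cdot \rank_\mathbb{F}(C^{-1}(X_\rho))$ (respectively $\rank_\mathbb{F}(H^{-1}(X_\rho))$). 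Hence $X_\rho$ admits a Hermitian factorization over $\mathbb{F}$ of rank at most $m_\rho$, which unfolded over $\mathbb{R}$ yields a decomposition $X_\rho = \sum_{t,s} v_{\rho,t,s} v_{\rho,t,s}^\top$ whose $d_\rho\cdot\dim\mathbb{F}$-block pattern matches that of the forward expression. Inverting the linear bijection $\tilde{P}\mapsto(w_{\rho,t,s})_{\rho,t,s}$ coming from the forward analysis recovers a real matrix $\tilde{Q}\in\mathbb{R}^{d\times\hat{n}}$, and setting $Q=\sum_{\gamma}\bm{e}_\gamma^\top\otimes \theta(\gamma)\tilde{Q}$ gives a $\theta$-compatible configuration $q\in\mathcal{C}_\theta(V)$ with $\Psi(Q^\top Q) = X$; that $q$ lies in $\mathcal{C}_\theta(V)$ (not merely among $\theta$-compatible configurations) follows because $\Psi^{-1}(X) = Q^\top Q \in \mathcal{L}^V$ forces $Q\bm{1}_V = \bm{0}$.

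The main obstacle is the Schur orthogonality identity for real irreducible representations of complex and quaternionic type, since Proposition~\ref{prop:schur} is stated only for complex representations. One has to thread the decomposition of $\rho\otimes\rho$ through its complex constituents $\pi\oplus\overline{\pi}$ (or $\pi\oplus\pi$ in the quaternionic case), being careful that the invariant elements picked out by the sum correspond exactly to the commutant $\Hom_\Gamma(\rho,\rho)\cong\mathbb{F}$ and can be represented by real matrices with the correct $C(\cdot)$ or $H(\cdot)$ block form. A parallel subtlety on the reverse side is verifying that the Hermitian positive semidefinite factorization lands in $\mathcal{K}_\rho$; this reduces to the well-known correspondence between the positive cones of $\mathcal{C}^{2n}\cap\mathcal{S}^{2n}$ and $\mathcal{H}^{4n}\cap\mathcal{S}^{4n}$ and the Hermitian positive semidefinite cones over $\mathbb{C}$ and $\mathbb{H}$, respectively.
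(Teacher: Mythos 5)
Your plan follows essentially the same route as the paper's proof: the forward inclusion is an analogue of Lemma~\ref{lem:1} (the paper's Lemma~\ref{lem:cal2}, proved via the real orthogonality relations of Proposition~\ref{prop:ortho}), and the reverse inclusion uses the $C^{-1}/H^{-1}$ correspondence, the rank and positive-semidefinite factorization facts over $\mathbb{C}$ and $\mathbb{H}$, and inversion of the resulting affine map in $\tilde{P}$. Your unified identity $\sum_\gamma\rho(\gamma)\otimes\rho(\gamma)=\frac{|\Gamma|}{d_\rho}\sum_s\mathrm{vec}(J_{\rho,s})\mathrm{vec}(J_{\rho,s})^\top$, with $\{J_{\rho,s}\}$ an orthogonal basis of the commutant, is correct and is a slightly tidier packaging of what the paper computes case-by-case in its Appendix; it follows at once from the facts that $\frac{1}{|\Gamma|}\sum_\gamma\rho(\gamma)\otimes\rho(\gamma)$ is the orthogonal projection onto the $\Gamma$-invariant subspace $\mathrm{vec}(\Hom(\rho,\rho))$ and that $\langle J_{\rho,s},J_{\rho,s}\rangle=d_\rho$ for each $s$. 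One wording slip: for $\rho\in\tilde{\Gamma}_\mathbb{H}$, $H^{-1}(\rho)$ is a \emph{quaternionic} matrix representation of quaternionic size $d_\rho/4$, not a complex irreducible representation, so Proposition~\ref{prop:schur} cannot be applied to it as stated; by Proposition~\ref{prop:type}, the relevant complex irreducible constituent of $\rho$ has degree $d_\rho/2$, or one can avoid the issue entirely via the averaging-projection observation above. This slip does not affect the correctness of your formula or the remainder of the argument.
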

    
    In the proof of Proposition~\ref{prop:range2}, we use the explicit form of $\Psi(P^\top P)$ (Lemma~\ref{lem:cal2}).
For this, as in Section~\ref{sec:4.5}, we define $\tilde{P}$ and $Y$ as follows.
We fix a representative vertex from each vertex orbit, 
and let $\tilde{P}$ be a $d \times \hat{n}$ matrix given by arranging the coordinates of the representative vertices.
Then we may suppose (by permuting the columns appropriately)
\begin{equation}
\label{eq:5.3.1}
P=\sum_{\gamma \in \Gamma} \bm{e}_\gamma^\top \otimes (\theta(\gamma) \tilde{P}).
\end{equation}
Since $\theta$ is an orthogonal representation, there is an orthogonal matrix $Y \in O(\mathbb{R}^d)$ such that 
    \begin{equation} \label{eq:Y2}
        Y \theta(\gamma) Y^\top = \bigoplus_{\rho \in \tilde{\Gamma}} I_{m_\rho} \otimes \rho(\gamma).
\end{equation}

    Let $S$ be the set of indices defined by 
\begin{align*}
        S=
\bigcup_{\mathbb{F}\in \{\mathbb{R},\mathbb{C},\mathbb{H}\}} \left\{ (\rho,t,l,a) : \rho \in \tilde{\Gamma}_\mathbb{F}, 1 \leq t \leq m_\rho, 
        1 \leq l \leq \frac{d_\rho}{\dim \mathbb{F}}, 1\leq a \leq \dim\mathbb{F} \right\}.
    \end{align*}
By (\ref{eq:5-2}), $|S|=d$. Hence in the following discussion we identify  $\mathbb{R}^S$ with $\mathbb{R}^d$ by taking the standard basis $\{\be_{(\rho,t,k,a)}: (\rho,t,k,a)\in S\}$.
\begin{lemma}\label{lem:cal2}
For each $\rho\in \tilde{\Gamma}_{\mathbb{F}}$, we have
\[
                \Psi(P^\top P)_\rho = \sum_{1 \leq t \leq m_\rho} W_{\rho,t} W_{\rho,t}^\top
            \]
            where $W_{\rho,t} \in \mathbb{R}^{d_\rho \hat{n}\times (\dim\mathbb{F}) }$ is defined by
            \begin{align*}
                W_{\rho,t} =\sqrt{\frac{|\Gamma|}{d_\rho}}\cdot \begin{cases} \displaystyle
 \sum_{1 \leq l \leq d_\rho} \bm{e}_l \otimes \tilde{P}^{\top}Y^{\top}\be_{(\rho,t,l,1)} & (\text{if }\mathbb{F}=\mathbb{R})\\  
C\left(\displaystyle \sum_{1 \leq l \leq \frac{d_\rho}{2}}  
\bm{e}_l \otimes \left(\tilde{P}^\top Y^\top( \bm{e}_{(\rho,t,l,1)}+{\rm i} \bm{e}_{(\rho,t,l,2)})\right)\right) & (\text{if }\mathbb{F}=\mathbb{C}) \\ 
H\left( \displaystyle \sum_{1 \leq l \leq \frac{d_\rho}{4}} 
\bm{e}_l \otimes \left(\tilde{P}^\top Y^\top (\bm{e}_{(\rho,t,l,1)}+{\rm i}\bm{e}_{(\rho,t,l,2)} -{\rm j} \bm{e}_{(\rho,t,l,3)} +{\rm k} \bm{e}_{(\rho,t,l,4)})\right)\right) & (\text{if }\mathbb{F}=\mathbb{H}). \\
\end{cases}
            \end{align*}
\end{lemma}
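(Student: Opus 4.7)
The plan is to carry out essentially the same computation as in Lemma~\ref{lem:1}, but now keeping track of the commutative algebra type of each real irreducible representation. Starting from the identity (\ref{eq:lem1-1}), which still holds verbatim, we have $P^\top P = \sum_{\gamma \in \Gamma} R(\gamma) \otimes (\tilde{P}^\top \theta(\gamma) \tilde{P})$. Applying $\Psi$ and invoking Proposition~\ref{prop:str2} gives, for each $\rho \in \tilde{\Gamma}_{\mathbb{F}}$,
\[
\Psi(P^\top P)_\rho = (I_{d_\rho} \otimes \tilde{P}^\top Y^\top)\,\Bigl( \sum_{\gamma \in \Gamma} \rho(\gamma) \otimes \bigoplus_{\rho' \in \tilde{\Gamma}} I_{m_{\rho'}} \otimes \rho'(\gamma) \Bigr)\,(I_{d_\rho} \otimes Y \tilde{P}),
\]
after substituting (\ref{eq:Y2}). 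Hence the entire proof reduces to evaluating $S_{\rho,\rho'} := \sum_{\gamma} \rho(\gamma) \otimes \rho'(\gamma)$ for all pairs of real irreducibles.

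First I would establish that $S_{\rho,\rho'} = 0$ when $\rho \not\sim \rho'$. The idea is to decompose each real irreducible over $\mathbb{C}$ via Proposition~\ref{prop:type} (as $\pi$, $\pi \oplus \overline{\pi}$, or $\pi \oplus \pi$ according to type) and apply the complex Schur orthogonality (Proposition~\ref{prop:schur}) componentwise: non-equivalent real irreducibles share no complex irreducible summand, so every resulting tensor sum vanishes. This handles all off-diagonal contributions, and consequently only the term with $\rho' = \rho$ (with $m_\rho$ copies) survives.

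For $\rho' = \rho$, the analysis splits into three cases. The real type $\mathbb{F}=\mathbb{R}$ is literally the calculation in Lemma~\ref{lem:1}, producing $S_{\rho,\rho} = \tfrac{|\Gamma|}{d_\rho}\sum_{k,l} E_{k,l} \otimes E_{k,l}$. For the complex type I would write $\rho(\gamma) = C(\pi(\gamma))$ for the underlying complex irreducible $\pi$ of degree $d_\rho/2$, fix a unitary $U$ block-diagonalizing $C$ so that $U^* C(\pi(\gamma)) U = \pi(\gamma) \oplus \overline{\pi(\gamma)}$, and apply Schur orthogonality to the four blocks of $(U \otimes U)^* S_{\rho,\rho} (U \otimes U) = \sum_\gamma (\pi \oplus \overline{\pi})(\gamma) \otimes (\pi \oplus \overline{\pi})(\gamma)$. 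Because $\pi \not\sim \overline{\pi}$ for complex type, only the $\pi \otimes \overline{\pi}$ and $\overline{\pi} \otimes \pi$ blocks are nonzero; transforming back via $U \otimes U$ produces a sum of the form $\tfrac{|\Gamma|}{d_\rho} \sum_{k,l} C(E_{k,l}) \otimes C(E_{k,l})$, which is precisely the shape needed so that after conjugating by $I_{d_\rho} \otimes \tilde{P}^\top Y^\top$ the resulting outer products collapse into $\sum_{t=1}^{m_\rho} W_{\rho,t} W_{\rho,t}^\top$ with the stated $W_{\rho,t} = \sqrt{|\Gamma|/d_\rho}\,C(\cdot)$. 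The quaternionic case proceeds analogously: $\rho(\gamma) = H(\pi(\gamma))$ with $\pi$ of degree $d_\rho/2$ and $\pi \sim \overline{\pi}$, and one diagonalizes $H$ into $\pi \oplus \pi$ over $\mathbb{C}$ through an appropriate unitary.

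The main obstacle I anticipate is the quaternionic bookkeeping: because $\pi$ is self-conjugate of quaternionic type, the four-by-four block Schur computation mixes $\pi \otimes \pi$, $\pi \otimes \overline{\pi}$, etc., nontrivially, and one must verify that the resulting real matrix is exactly $H$ applied to the quaternionic vector with the specific sign pattern $\mi, -\mj, \mk$ appearing in the statement. This sign choice is forced by the requirement $H(x^*) = H(x)^\top$ and by which pure-imaginary quaternionic units are compatible with the chosen intertwiner $J$ coming from the embedding $\mathbb{H} \hookrightarrow \Hom(\rho,\rho)$. Once this normalization is pinned down, the remainder is routine algebraic manipulation of tensor products and the factorization $\Psi(P^\top P)_\rho = \sum_t W_{\rho,t} W_{\rho,t}^\top$ falls out directly from the surviving $m_\rho$ diagonal copies of $\rho$ in the decomposition of $\theta$.
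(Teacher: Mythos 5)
Your proposal is correct and follows essentially the same approach as the paper: starting from the analogue of (\ref{eq:lem1-1}), reducing to the evaluation of $\sum_{\gamma}\rho(\gamma)\otimes\rho'(\gamma)$ via (\ref{eq:5-0}) and (\ref{eq:Y2}), decomposing each real irreducible over $\mathbb{C}$ using Proposition~\ref{prop:type}, and applying complex Schur orthogonality block-by-block. The paper merely packages that orthogonality computation into the standalone Proposition~\ref{prop:ortho} (whose proof is exactly the unitary block-decomposition argument you sketch), while you inline it; and, as you anticipate, the quaternionic sign pattern $\mi,-\mj,\mk$ arises from bookkeeping the explicit $4\times 4$ outer-product factorization rather than from any deeper structural constraint.
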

    The proof of Lemma~\ref{lem:cal2} is in Appendix.
    For positive semidefinite complex Hermitian matrices, the following properties are well-known.
    A complex Hermitian matrix $X$ is positive semidefinite if and only if $C(X)$ is positive semidefinite.
    A positive semidefinite complex Hermitian matrices $X \in \mathbb{C}^{n \times n}$ with rank $r$ is written by $X=\sum_{1\leq i\leq r} v_i v_i^*$
    for some $v_1, \ldots, v_r \in \mathbb{C}^n$.
Also $\rank C(X) = 2\rank X$ for any complex matrix $X$.

Although $\mathbb{H}$ is not a field, the corresponding properties are known even for quaternionic matrices. 
For a set $v_1,\dots, v_k$ of vectors in $\mathbb{H}^n$, its (right) linear (in)dependence is defined in terms of the existence of scalars $\lambda_i\in \mathbb{H}$ such that $\sum_{i=1}^kv_i\lambda_i=\bm{0}$ with $\lambda_i\neq 0$ for some $i$.
Then the rank of a quoternionic matrix $X$ is defined by the maximum possible size of a linearly independent column vector set.
By \cite[Theorem~7.3]{zhang1997quaternions},  $\rank H(X) = 4\rank X$.
By \cite[Remark~6.1]{zhang1997quaternions},  $X$ is positive semidefinite if and only if $H(X)$ is positive semidefinite.
    Also by \cite[Corollary~6.2]{zhang1997quaternions}, a positive semidefinite $X \in \mathbb{H}^{n \times n}$ with rank $r$ is written by 
    $X=\sum_{1\leq i\leq r} v_i v_i^*$ for some $v_1, \ldots, v_r \in \mathbb{H}^n$.

    Now we are ready to prove Proposition~\ref{prop:range2}.

    \begin{proof}[Proof of Proposition~\ref{prop:range2}]
        By Lemma~\ref{lem:cal2}, for any $q \in {\cal C}_\theta(V)$, we have $\rank \Psi(Q^\top Q)_\rho \leq \dim \mathbb{F}\cdot m_\rho$ ($\rho \in \tilde{\Gamma}_\mathbb{F}$).
        Hence it suffices to show the other inclusion.
        Take any $X \in {\cal K}_{+,\Gamma}$ satisfying
        $\rank X_\rho \leq \dim \mathbb{F}\cdot m_\rho$ ($\rho \in \tilde{\Gamma}_\mathbb{F}$).
        We want to find $p \in {\cal C}_\theta(V)$ satisfying $\Psi(P^\top P)=X$.

The proof is identical to that for the case of absolutely irreducible groups given in  Proposition~\ref{prop:range}.
Indeed, if $\rho \in \tilde{\Gamma}_\mathbb{C}$ (resp., $\rho \in \tilde{\Gamma}_\mathbb{H}$), then $C^{-1}(X_\rho)$ (resp., $H^{-1}(X_\rho)$) is a positive semidefinite complex (resp., quaternionic) matrix with rank at most $m_\rho$.   
Hence, for each $\rho\in \tilde{\Gamma}_{\mathbb{F}}$, there exist vectors 
        $v_{\rho,1}, \ldots, v_{\rho,m_\rho}$ in $\mathbb{F}^\frac{d_\rho \hat{n}}{\dim \mathbb{F}}$ such that
\begin{equation}\label{eq:5.5}
X_\rho=\sum_{1 \leq t \leq m_\rho} v_{\rho,t} v_{\rho,t}^*, \quad
C^{-1}(X_\rho)=\sum_{1 \leq t \leq m_\rho} v_{\rho,t} v_{\rho,t}^*, \quad \text{ or } \quad 
H^{-1}(X_\rho)=\sum_{1 \leq t \leq m_\rho} v_{\rho,t} v_{\rho,t}^*
\end{equation}
depending on the type of $\rho$.    
Applying the same calculation as that in Proposition~\ref{prop:range}, one can find 
a $d \times \hat{n}$ matrix $\tilde{P}$ such that 
\begin{align*}
v_{\rho,t}&=\sqrt{\frac{|\Gamma|}{d_{\rho}}}\displaystyle \sum_{1 \leq l \leq \frac{d_\rho}{2}}  
\bm{e}_l \otimes \tilde{P}^\top Y^\top \bm{e}_{(\rho,t,l,1)} & (\text{if }\mathbb{F}=\mathbb{R}), \\ 
C(v_{\rho,t})&=\sqrt{\frac{|\Gamma|}{d_{\rho}}}C\left(\displaystyle \sum_{1 \leq l \leq \frac{d_\rho}{2}}  
\bm{e}_l \otimes \left(\tilde{P}^\top Y^\top( \bm{e}_{(\rho,t,l,1)}+{\rm i} \bm{e}_{(\rho,t,l,2)})\right)\right) & (\text{if }\mathbb{F}=\mathbb{C}), \\ 
H(v_{\rho,t})&=\sqrt{\frac{|\Gamma|}{d_{\rho}}}H\left( \displaystyle \sum_{1 \leq l \leq \frac{d_\rho}{4}} 
\bm{e}_l \otimes \left(\tilde{P}^\top Y^\top (\bm{e}_{(\rho,t,l,1)}+{\rm i}\bm{e}_{(\rho,t,l,2)} -{\rm j} \bm{e}_{(\rho,t,l,3)} +{\rm k} \bm{e}_{(\rho,t,l,4)})\right)\right) & (\text{if }\mathbb{F}=\mathbb{H}). 
\end{align*}
for all $\rho\in \tilde{\Gamma}$ and $1\leq t\leq m_{\rho}$.

%
Let $p$ be  a $d$-dimensional configuration defined by $P=\sum_{\gamma \in \Gamma} \bm{e}_\gamma^\top \otimes (\theta(\gamma)\tilde{P})$.
Then $p$ is compatible with $\theta$.
Also, by (\ref{eq:5.5}), the definition of $\tilde{P}$, and Lemma~\ref{lem:cal2},  we have $\Psi(P^\top P)_\rho=X_\rho$.
Since $P^{\top}P=\Psi^{-1}(X)\in {\cal L}^V$, we have $P^{\top}P \bm{1}_V$, meaning that $p(V)=\bm{0}$. 
Thus $p$ satisfies $p\in {\cal C}_{\theta}(V)$ and $\Psi(P^{\top}P)=X$ as required.
    \end{proof}

\subsection{Proof of Theorem~\ref{thm:sym} for General Case} \label{sec:5.3}
    We give the proof of Theorem~\ref{thm:sym} for general case.
    Let $\mathbb{Q}_{\theta, \Gamma}$ be the finite extension field of $\mathbb{Q}$ generated by $\mathbb{Q}$ and the entries of $\theta(\gamma)$ for $\gamma \in \Gamma$ and those of $\rho(\gamma)$ for $\rho\in \tilde{\Gamma}$ and $\gamma\in \Gamma$.
Let $\Psi$ be the orthogonal transformation given in Proposition~\ref{prop:str2}, and we consider the block-diagonalization of the SDP problem. The resulting SDP problem (P$^\Psi$) is as given in Section~\ref{sec:4.3}.
Proposition~\ref{prop:uniqueness} implies that (P$^\Psi$) has the unique solution $\Psi(P^{\top}P)$ if $(G,p)$ is universally rigid (c.f.~Proposition~\ref{prop:2}).

    To apply Gortler-Thurston's argument, we need to understand the facial structure of $\mathcal{K}_{+,\Gamma} = \Psi(({\cal L}^V)^\Gamma)$.
By (\ref{eq:K_G}), $\mathcal{K}_{+,\Gamma}$ is the direct sum of  positive semidefinite cones,  positive semidefinite Laplacian cones, 
cones of the form $\mathcal{C}^k\cap \mathcal{S}_+^k$, and cones of the form  $\mathcal{H}^k\cap \mathcal{S}_+^k$ for some integer $k$.

Through the map $C$, 
$\mathcal{C}^k\cap \mathcal{S}_+^k$ can be identified with 
the cone of positive semidefinite complex Hermitian matrices of size $k\times k$.
Similarly, through the map $H$, 
$\mathcal{H}^k\cap \mathcal{S}_+^k$ can be identified with 
the cone of positive semidefinite quaternionic Hermitian matrices of size $k\times k$.
%
Hence the following proposition can be proved by the identical manner as the proof of  Proposition~\ref{prop:psdface}.
    \begin{prop}\label{prop:chface}
        \begin{itemize}
        \item[(i)] For $A \in \mathcal{C}^k\cap \mathcal{S}_+^k$ with rank $2r$, $\dim F_{\mathcal{C}^k\cap \mathcal{S}_+^k}(A)=r^2$.

        For $B \in \mathcal{C}^k \cap \mathcal{S}^k$, the hyperplane $\{ \langle X,B \rangle=0 : X \in \mathcal{C}^k \cap \mathcal{S}^k\}$ in $\mathcal{C}^k \cap \mathcal{S}^k$
        exposes $F_{\mathcal{C}^k\cap \mathcal{S}_+^k}(A)$ if and only if $\rank A + \rank B = k$, $\langle A,B \rangle=0$, and $B \succeq 0$.

        \item[(ii)] For $A \in \mathcal{H}^k\cap \mathcal{S}_+^k$ with rank $4r$, $\dim F_{\mathcal{H}^k\cap \mathcal{S}_+^k}(A)=2r^2-r$.
        
        For $B \in \mathcal{H}^k \cap \mathcal{S}^k$, the hyperplane $\{ \langle X,B \rangle=0 : X \in \mathcal{H}^k \cap \mathcal{S}^k\}$ 
        in $\mathcal{H}^k \cap \mathcal{S}^k$ exposes $F_{\mathcal{H}^k\cap \mathcal{S}_+^k}(A)$ if and only if $\rank A + \rank B = k$, $\langle A,B \rangle=0$, and $B \succeq 0$.
        \end{itemize}
    \end{prop}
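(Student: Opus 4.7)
The plan is to reduce both parts of Proposition~\ref{prop:chface} to the standard facial theory of positive semidefinite Hermitian matrices over $\mathbb{C}$ and $\mathbb{H}$ respectively, using the embeddings $C$ and $H$ as the bridge. Once this identification is in place, the argument mirrors the proof of Proposition~\ref{prop:psdface} line by line.

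First I would verify that $C$ gives a linear isomorphism from the space of complex Hermitian matrices in $\mathbb{C}^{n\times n}$ onto $\mathcal{C}^{2n}\cap \mathcal{S}^{2n}$, and similarly $H$ gives a linear isomorphism from quaternionic Hermitian matrices in $\mathbb{H}^{n\times n}$ onto $\mathcal{H}^{4n}\cap \mathcal{S}^{4n}$. The key properties to confirm are: (a) both maps preserve positive semidefiniteness in both directions, (b) $\rank C(X) = 2\rank X$ and $\rank H(X) = 4\rank X$, and (c) the real trace inner product pulls back to a positive scalar multiple of $\mathrm{Re}\,\mathrm{tr}(XY^*)$. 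For $\mathbb{H}$, (a) and (b) are exactly \cite[Remark~6.1]{zhang1997quaternions} and \cite[Theorem~7.3]{zhang1997quaternions}, already cited in Section~\ref{sec:5.3}; the analogs for $\mathbb{C}$ are standard.

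Next, for the dimension statements, I would apply the Hermitian analog of Proposition~\ref{prop:psdface}. For a complex Hermitian $X$ of rank $r$, the minimal face of the PSD Hermitian cone containing $X$ consists of $\{Y\succeq 0 : \mathrm{range}(Y)\subseteq \mathrm{range}(X)\}$, which is affinely isomorphic to the PSD cone of $r\times r$ complex Hermitian matrices of real dimension $r^2$; pulling back through $C$ and using $\rank A = 2r$ gives $\dim F_{\mathcal{C}^k\cap \mathcal{S}^k_+}(A) = r^2$. The quaternionic case is the same: using the spectral decomposition from \cite{zhang1997quaternions}, the face is isomorphic to the PSD cone of $r\times r$ quaternionic Hermitian matrices, whose real dimension is $r + 4\binom{r}{2} = 2r^2 - r$, and $\rank A = 4r$ then gives the stated formula.

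Finally, for the hyperplane characterization, I would exploit the self-duality of the complex and quaternionic Hermitian PSD cones under the real trace inner product. An exposing hyperplane of the minimal face of $A$ is then necessarily defined by some $B\succeq 0$ with $\langle A,B\rangle = 0$, and the extra condition $\rank A + \rank B = k$ is precisely what forces the hyperplane to support the minimal face rather than a strictly larger face: when $A,B\succeq 0$ with $\langle A,B\rangle=0$ and complementary ranks, one has $\mathrm{range}(B)=\mathrm{range}(A)^\perp$, so $B$ lies in the relative interior of the face dual to $F_{\mathcal{S}_+}(A)$. The main obstacle will be the quaternionic case, where non-commutativity of $\mathbb{H}$ requires care in invoking spectral theory, defining rank, and verifying self-duality; but each of these ingredients is supplied by \cite{zhang1997quaternions}, so no genuinely new argument is needed beyond translating the real proof through $H$.
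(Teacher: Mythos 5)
Your proposal is correct and takes essentially the same approach the paper uses: the paper also identifies $\mathcal{C}^k\cap\mathcal{S}_+^k$ and $\mathcal{H}^k\cap\mathcal{S}_+^k$ with the complex and quaternionic Hermitian PSD cones via $C$ and $H$ (citing \cite[Remark~6.1, Theorem~7.3]{zhang1997quaternions} for the quaternionic facts) and then says the proof is ``identical in manner'' to that of Proposition~\ref{prop:psdface}. You have simply spelled out the reduction — dimension count via $r + (\dim\mathbb{F})\binom{r}{2}$, rank scaling under $C$ and $H$, preservation of PSD, and pullback of the trace inner product — that the paper leaves implicit.
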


    Now we give the proof of Theorem~\ref{thm:sym} for general case.
    \begin{proof}[Proof of the necessity of Theorem~\ref{thm:sym}]
        Let $(G,p)$ be a $\theta$-symmetric framework with  a point group $\theta:\Gamma \rightarrow O(\mathbb{R}^d)$ which is generic modulo symmetry 
        and universally rigid.
        Suppose also that $p$ affinely spans $\mathbb{R}^d$. By a translation, we may suppose $p \in {\cal C}_\theta(V)$.
        We shall apply Proposition~\ref{prop:GT} to the ambient space $\mathcal{K}_\Gamma$ defined in (\ref{eq:K_G}) and a point $\Psi(P^\top P)$.
        To do so, we need to prove the local genericity of $\Psi(P^\top P)$.
        \begin{claim} \label{claim:2}
            For \[
            k=\sum_{\rho \in \tilde{\Gamma}_\mathbb{R}} \binom{m_\rho +1}{2} 
            + \sum_{\rho \in \tilde{\Gamma}_\mathbb{C}} m_\rho^2
            + \sum_{\rho \in \tilde{\Gamma}_\mathbb{H}} (2m_\rho^2-m_\rho),
            \]
            $\Psi(P^\top P)$ is locally generic over $\mathbb{Q}_{\theta,\Gamma}$ in $\text{ext}_k(\mathcal{K}_{+,\Gamma})$.
        \end{claim}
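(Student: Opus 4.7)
The plan is to follow the template of Claim~\ref{claim:1} in the absolutely irreducible case, replacing the algebraic inputs by Proposition~\ref{prop:range2} and Proposition~\ref{prop:chface}. Since $(G,p)$ is generic modulo symmetry, $p$ is generic over $\mathbb{Q}_{\theta,\Gamma}$ in $\mathcal{C}_\theta(V)$, and the map $f:q\mapsto \Psi(Q^\top Q)$ is algebraic over $\mathbb{Q}_{\theta,\Gamma}$; hence, by the version of Proposition~\ref{prop inherit genericity} over $\mathbb{Q}_{\theta,\Gamma}$, $\Psi(P^\top P)$ is generic over $\mathbb{Q}_{\theta,\Gamma}$ in $f(\mathcal{C}_\theta(V))$.

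I would then pin down $\rank \Psi(P^\top P)_\rho$ for each $\rho$. Since $p$ affinely spans $\mathbb{R}^d$, $\rank \Psi(P^\top P) = d$, while the block decomposition of $\mathcal{K}_\Gamma$ yields $\rank \Psi(P^\top P) = \sum_{\mathbb{F}} \sum_{\rho \in \tilde{\Gamma}_\mathbb{F}} \frac{d_\rho}{\dim\mathbb{F}} \rank \Psi(P^\top P)_\rho$. Combined with the componentwise upper bound $\rank \Psi(P^\top P)_\rho \leq \dim\mathbb{F}\cdot m_\rho$ from Proposition~\ref{prop:range2} and the identity~(\ref{eq:5-2}), this forces $\rank \Psi(P^\top P)_\rho = \dim\mathbb{F}\cdot m_\rho$ for each $\rho \in \tilde{\Gamma}_\mathbb{F}$.

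Next I would express $\text{ext}_k(\mathcal{K}_{+,\Gamma})$ in terms of block ranks. Since a face of a direct sum of cones decomposes as a direct sum of faces, $\dim F_{\mathcal{K}_{+,\Gamma}}(X)$ is additive over blocks. Applying Proposition~\ref{prop:realface} to the trivial block, Proposition~\ref{prop:psdface} to the remaining real blocks, and Proposition~\ref{prop:chface} to the complex and quaternionic blocks, and substituting the target ranks computed above, gives $\dim F_{\mathcal{K}_{+,\Gamma}}(\Psi(P^\top P)) = \sum_{\rho\in\tilde{\Gamma}_\mathbb{R}} \binom{m_\rho+1}{2} + \sum_{\rho\in\tilde{\Gamma}_\mathbb{C}} m_\rho^2 + \sum_{\rho\in\tilde{\Gamma}_\mathbb{H}} (2m_\rho^2-m_\rho) = k$, matching the claimed value.

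Finally, I would localize using lower semi-continuity of rank, together with the fact that ranks of matrices in $\mathcal{C}^k\cap\mathcal{S}^k_+$ and $\mathcal{H}^k\cap\mathcal{S}^k_+$ are always multiples of $2$ and $4$ respectively by the very structure of $C$ and $H$. This produces neighborhoods $U_\rho \subseteq \mathcal{K}_\rho$ of $\Psi(P^\top P)_\rho$ in which no rank smaller than $\dim\mathbb{F}\cdot m_\rho$ can occur. Setting $U := \bigoplus_\rho I_{d_\rho/\dim\mathbb{F}} \otimes U_\rho$, any $X \in U\cap \text{ext}_k(\mathcal{K}_{+,\Gamma})$ must satisfy $\rank X_\rho = \dim\mathbb{F}\cdot m_\rho$ for every $\rho$, whence $X \in f(\mathcal{C}_\theta(V))$ by Proposition~\ref{prop:range2}. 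Combined with the genericity of $\Psi(P^\top P)$ in $f(\mathcal{C}_\theta(V))$, this yields the desired local genericity in $\text{ext}_k(\mathcal{K}_{+,\Gamma})$. The main bookkeeping obstacle is verifying that the sum of block facial dimensions at these target ranks is exactly the $k$ in the statement; everything else is a direct adaptation of Claim~\ref{claim:1} with Proposition~\ref{prop:chface} supplying the facial dimensions for the complex and quaternionic blocks.
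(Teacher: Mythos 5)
Your proposal is correct and is essentially a fully written-out version of what the paper does: the paper's own proof of Claim~\ref{claim:2} simply cites Propositions~\ref{prop:psdface}, \ref{prop:realface}, \ref{prop:chface}, and~\ref{prop:range2} and says ``the proof follows the same line as Claim~\ref{claim:1},'' and your write-up carries out exactly that adaptation, including the needed observation that ranks in $\mathcal{C}^k\cap\mathcal{S}^k_+$ and $\mathcal{H}^k\cap\mathcal{S}^k_+$ are forced to be multiples of $2$ and $4$.
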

        \begin{proof}
            The facial structure of $\text{ext}_k(\mathcal{K}_{+,\Gamma})$ are described by Proposition~\ref{prop:psdface}, Proposition~\ref{prop:realface} and Proposition~\ref{prop:chface}.
            We also have Proposition~\ref{prop:range2}. 
            Hence the proof follows the same line as Claim~\ref{claim:1}.
        \end{proof}

        Define a linear subspace $\mathcal{K}_\Gamma(G)$ of ${\cal K}_\Gamma$ and a projection $\pi : {\cal K}_\Gamma \rightarrow {\cal K}_\Gamma (G)$ 
        similarly as in the previous section.
        Then by (the generalization of) Proposition~\ref{prop:GT}, there exists a hyperplane $H=\{\langle X,L \rangle =0 : X \in \mathcal{K}_\Gamma(G)\}$ in $\mathcal{K}_\Gamma(G)$ 
        defined by some $L \in \mathcal{K}_\Gamma(G)$ such that $\pi^{-1}(H)=\{\langle X,L \rangle =0 : X \in \mathcal{K}_\Gamma \}$ exposes $F_{\mathcal{K}_{+,\Gamma}}(\Psi(P^\top P))$.
        By Proposition~\ref{prop:psdface} ,Proposition~\ref{prop:realface} and Proposition~\ref{prop:chface}, we have
        \begin{align*}
            & \rank L_\rho =\begin{cases}
            \hat{n}-m_\tri-1 & (\rho =\tri) \\
            d_\rho\hat{n} - \dim \mathbb{F} \cdot m_\rho & (\rho \in \tilde{\Gamma}_\mathbb{F} \setminus \{1\}),
            \end{cases} \\
            &L_\rho \succeq 0,~ \langle L_\rho, \Psi(P^\top P)_\rho \rangle =0.
        \end{align*}
        By (\ref{eq:5-1}), (\ref{eq:5-2}), we have 
        \begin{align*}
            \rank L &= \hat{n}-m_\tri-1 + \sum_{\mathbb{F} = \mathbb{R}, \mathbb{C}, \mathbb{H}}  
            \sum_{\rho \in \tilde{\Gamma}_\mathbb{F} \setminus \{1\}}  \frac{d_\rho}{\dim \mathbb{F}}(d_\rho\hat{n} - \dim \mathbb{F} \cdot m_\rho) \\
            &=\hat{n}|\Gamma| - d -1 = n-d-1
        \end{align*}
        As $L\in \mathcal{K}_\Gamma(G)$, $\Psi^{-1}(L)$ is a weighted Laplacian on $G$ satisfying
        \[
            \Psi^{-1}(L) \succeq 0,~ \langle \Psi^{-1}(L),~ P^\top P \rangle =0, \rank \Psi^{-1}(L)=n-d-1.
        \]
        Therefore $\Psi^{-1}(L)$ satisfies the property of the statement.
This completes the proof.
    \end{proof}
\if0
\subsection{Examples}
\subsubsection{$C_4$-symmetry}
We consider the four fold rotational symmetry in the plane denoted by $C_4$-symmetry.
The anticlockwise $90^\circ$ rotation is denoted by $r$. 
Then,  $d=2$, $\Gamma=C_4=\{1,r,r^2,r^3\}$, $\theta(r)=\begin{pmatrix} 0 & -1 \\ 1 & 0 \end{pmatrix}$.
    $C_4$ has two real irreducible representations $+$ and $-$ of real type and one real irreducible representation $E$ of complex type.
    $+$ is the trivial representation and $-$ satisfies $-(r)=-1$, while
    $E$ satisfies $E(r)=\begin{pmatrix} 0 & -1 \\ 1 & 0 \end{pmatrix}$.
    Let $\tilde{P}^\top = \begin{pmatrix} p_x & p_y \end{pmatrix}$ for $p_x, p_y \in \mathbb{R}^{\hat{n}}$. Then by Lemma~\ref{lem:cal2}, we have 
    \[
        \Psi(P^\top P) = O \oplus O \oplus 2\begin{pmatrix}p_x &p_y \\p_y & -p_x \end{pmatrix}\begin{pmatrix}p_x &p_y \\p_y & -p_x \end{pmatrix}^\top.
    \]
    Note that, over $\mathbb{C}$, it can be further decomposed into $(p_x+ip_y)(p_x+ip_y)^* \oplus (p_x-ip_y)(p_x-ip_y)^*$.
    
\subsubsection{$D_4$-symmetry}
    $D_4$-symmetry exhibits the symmetry of a square.
    The anticlockwise rotation is denoted as $r$ and reflection with respect to $x$-axis is denoted as $s$.
Then, $d=2$, $\Gamma=D_4=\{1,r,r^2,r^3,s,sr,sr^2,sr^3\}$, $\theta(r)=\begin{pmatrix}0 &-1 \\1 &0 \end{pmatrix}, \theta(s)=\begin{pmatrix} 1 & 0 \\ 0 & -1 \end{pmatrix}$.
    $D_4$ has four real irreducible representations of degree one, $++$, $+-$, $-+$, $--$, and one real irreducible representation $E$ of degree two, all of which is of real type.
    $++$ is the trivial representation, $+-$ is satisfying $+-(r)=1, +-(s)=-1$, $-+$ is satisfying $-+(r)=-1, -+(s)=1$, $--$ is satisfying $--(r)=-1, --(s)=-1$, and $E=\theta$.
    Hence, by Lemma~\ref{lem:1}, for $\tilde{P}^\top = \begin{pmatrix} p_x & p_y \end{pmatrix}$,
    \[
        \Psi(P^\top P) = O \oplus O \oplus O \oplus O \oplus I_2 \otimes
        4 \begin{pmatrix} p_x \\ p_y \end{pmatrix} \begin{pmatrix} p_x \\ p_y \end{pmatrix}^\top.
    \]
\fi

\section*{Acknowledgments}
This work was supported by JST ERATO Grant Number JPMJER1903 
and JSPS KAKENHI Grant Number JP18K11155.
\bibliographystyle{plain}
\bibliography{myreference}
\appendix
\section{Proof of Lemma~\ref{lem:cal2}}
    In the proof of Lemma~\ref{lem:cal2}, we use the orthogonality relation of real irreducible representations (Proposition~\ref{prop:ortho}).
Recall that every $\rho \in \tilde{\Gamma}_\mathbb{R}$ 
(resp., $\rho\in \tilde{\Gamma}_{\mathcal{C}}, \rho\in \tilde{\Gamma}_\mathbb{H}$) is an orthogonal representation with $\rho(\gamma)\in \mathbb{R}^{d_{\rho}\times d_{\rho}}$ (resp., ${\cal C}^{d_\rho}$, ${\cal H}^{d_\rho}$) for $\gamma\in \Gamma$.
Fix the standard basis ${\cal B}_{\rho}$ of 
$\mathbb{R}^{d_{\rho}\times d_{\rho}}$, $\mathcal{C}^{d_{\rho}}$, $\mathcal{H}^{d_{\rho}}$ by
\begin{align*}
&\{E_{lm}:1\leq l,m \leq d_{\rho}\}, \\ 
&\{E_{lm} \otimes c(1), E_{lm} \otimes c(\mi) : 1\leq l,m\leq d_{\rho}/2 \}, \\
&\{E_{lm} \otimes h(1), E_{lm} \otimes h(\mi), E_{lm} \otimes h(\mj), E_{lm} \otimes h(\mk) : 1\leq l,m\leq d_{\rho}/4\},
\end{align*}
respectively.
For each $B \in B_\rho$, the {\em coordinate vector $\rho_B \in \mathbb{R}^{\Gamma}$ with respect to $B$ is defined by
\[ 
 \rho_B(\gamma) = \frac{1}{\dim \mathbb{F}} \langle \rho(\gamma), B \rangle \qquad (\gamma\in \Gamma).
\]
    Then, we have the following. 
    \begin{prop} \label{prop:ortho}
        $\left\{ \sqrt{ \frac{d_\rho}{|\Gamma|} } \rho_B : \rho \in \tilde{\Gamma}, B \in {\cal B}_\rho \right\}$ forms an orthogonal basis of $\mathbb{R}^{\Gamma}$.
    \end{prop}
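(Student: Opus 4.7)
The plan is to first count, and then show pairwise orthogonality reducing everything to Schur orthogonality (Proposition~\ref{prop:schur}) for complex irreducible representations. For each $\rho\in\tilde\Gamma_\mathbb{F}$ with $\mathbb{F}\in\{\mathbb R,\mathbb C,\mathbb H\}$, the basis $\mathcal B_\rho$ has cardinality $(\dim\mathbb F)\cdot (d_\rho/\dim\mathbb F)^2=d_\rho^2/\dim\mathbb F$. Summing over $\tilde\Gamma$ and invoking \eqref{eq:5-1} yields $|\Gamma|=\dim\mathbb R^\Gamma$, so it suffices to prove orthogonality and non-vanishing of each $\rho_B$; the basis property then follows.

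Next I compute the inner product $\langle\rho_B,\rho'_{B'}\rangle=\sum_{\gamma\in\Gamma}\rho_B(\gamma)\rho'_{B'}(\gamma)$ case by case using Proposition~\ref{prop:type}. For $\rho\in\tilde\Gamma_\mathbb R$, the coordinate vector with respect to $B=E_{lm}$ is $\rho_B(\gamma)=\rho(\gamma)[l,m]$, since $\dim\mathbb F=1$ and $\langle \rho(\gamma),E_{lm}\rangle=\rho(\gamma)[l,m]$. Because $\rho$ is absolutely irreducible and has real entries, Schur orthogonality (Proposition~\ref{prop:schur}) directly gives $\langle\rho_B,\rho'_{B'}\rangle=\frac{|\Gamma|}{d_\rho}\delta_{\rho,\rho'}\delta_{B,B'}$, confirming orthonormality after the scaling $\sqrt{d_\rho/|\Gamma|}$.

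For $\rho\in\tilde\Gamma_\mathbb C$, by Lemma~\ref{lem:good}(i) and Proposition~\ref{prop:type} I write $\rho(\gamma)=C(\pi(\gamma))$ for a complex irreducible $\pi$ of degree $d_\rho/2$ with $\pi\not\sim\overline\pi$. A direct computation using $\text{tr}(C(a+b\mi)\cdot I_2)=2a$ and $\text{tr}(C(a+b\mi)\cdot c(\mi)^\top)=2b$ shows that
\[
\rho_{E_{lm}\otimes c(1)}(\gamma)=\mathrm{Re}\,\pi(\gamma)[l,m],\qquad \rho_{E_{lm}\otimes c(\mi)}(\gamma)=\mathrm{Im}\,\pi(\gamma)[l,m].
\]
Writing real and imaginary parts as $\frac12(\pi+\overline\pi)[l,m]$ and $\frac{1}{2\mi}(\pi-\overline\pi)[l,m]$ and applying Proposition~\ref{prop:schur} to the inequivalent pair $\pi,\overline\pi$, the cross terms vanish and one obtains $\langle\rho_B,\rho'_{B'}\rangle=\frac{|\Gamma|}{d_\rho}\delta_{\rho,\rho'}\delta_{B,B'}$. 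For $\rho\in\tilde\Gamma_\mathbb H$ a similar strategy works using Lemma~\ref{lem:good}(ii) and the decomposition $\rho\cong\pi\oplus\pi$ over $\mathbb C$ with $\pi\cong\overline\pi$: the four basis elements $E_{lm}\otimes h(1),\,E_{lm}\otimes h(\mi),\,E_{lm}\otimes h(\mj),\,E_{lm}\otimes h(\mk)$ extract the four real components of each quaternionic entry of the reduced matrix, and each such component can be expressed as a real linear combination of entries of $\pi$ and an intertwiner $J\pi J^{-1}$ (coming from the quaternionic structure), which is equivalent to $\pi$. Schur orthogonality then again collapses the inner product to $\frac{|\Gamma|}{d_\rho}\delta_{\rho,\rho'}\delta_{B,B'}$.

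The hard part is the quaternionic case: unlike the complex case, where the conjugate $\overline\pi$ is inequivalent to $\pi$ and cross terms vanish automatically, in the quaternionic case $\pi\cong\overline\pi$, so one must track an explicit intertwiner $J$ (with $J^2=-I$) and verify that the four components $1,\mi,\mj,\mk$ remain orthogonal by computing traces of products of the $4\times 4$ real matrices $H(1),H(\mi),H(\mj),H(\mk)$. Once this bookkeeping is done, the constant $\frac{1}{\dim\mathbb F}$ in the definition of $\rho_B$ is precisely what is needed so that the normalizations align uniformly across the three types, and the scaling $\sqrt{d_\rho/|\Gamma|}$ converts the common factor $\frac{|\Gamma|}{d_\rho}$ into unit norm.
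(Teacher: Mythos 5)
Your overall strategy matches the paper's: count using \eqref{eq:5-1}, reduce cross-representation orthogonality to Schur orthogonality over $\mathbb{C}$, and then handle each type separately. The real case is identical. The complex case is also essentially the paper's argument phrased slightly differently --- you write $\Re\pi_{kl}=\tfrac12(\pi_{kl}+\overline{\pi_{kl}})$ and $\Im\pi_{kl}=\tfrac{1}{2\mi}(\pi_{kl}-\overline{\pi_{kl}})$ and invoke inequivalence of $\pi$, $\overline\pi$ to kill cross-terms, while the paper packages the same fact as a $2\times 2$ unitary change of variables; these are equivalent.

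The quaternionic case, however, is not carried through, and the route you sketch is not quite the one that works cleanly. Saying that the components can be expressed ``as a real linear combination of entries of $\pi$ and an intertwiner $J\pi J^{-1}$, which is equivalent to $\pi$'' does not give you anything usable on its own: the statement that $J\pi J^{-1}$ is \emph{equivalent} to $\pi$ does not specify the linear relations between the two sets of matrix entries, and the quaternionic structure involves the antilinear conjugate ($J\overline{\pi}J^{-1}=\pi$, not $J\pi J^{-1}=\pi$), so there are $\overline{\pi}$-entries lurking. Unlike the complex case, here $\pi\cong\overline{\pi}$, so the cross-terms between $\pi$ and $\overline{\pi}$ do \emph{not} vanish automatically and must be tracked by hand. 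What the paper supplies, and what your sketch omits, is the concrete identity
\[
\pi(\gamma)=a(\gamma)\otimes\begin{pmatrix}1&0\\0&1\end{pmatrix}+b(\gamma)\otimes\begin{pmatrix}\mi&0\\0&-\mi\end{pmatrix}+c(\gamma)\otimes\begin{pmatrix}0&1\\-1&0\end{pmatrix}+d(\gamma)\otimes\begin{pmatrix}0&\mi\\\mi&0\end{pmatrix},
\]
where $a,b,c,d$ are the $h(1),h(\mi),h(\mj),h(\mk)$-components of the quaternionic matrix. Inverting this exhibits $a,b,c,d$ as explicit $\mathbb{C}$-linear combinations of matrix entries of the \emph{single} irreducible $\pi$, after which Schur orthogonality of $\pi$ alone (together with realness of $a,b,c,d$, so that $\overline{a}=a$, etc.) determines every inner product and yields the claimed orthonormality with the correct constant $\tfrac{|\Gamma|}{d_\rho}$. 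You should replace the appeal to ``an intertwiner $J\pi J^{-1}$'' with this explicit decomposition and then do the trace computation, as in the complex case.
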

    \begin{proof}
    By definition, for each $\rho \in \tilde{\Gamma}_\mathbb{F}$, $|{\cal B}_\rho|=\frac{{d_\rho}^2}{\dim \mathbb{F}}$.
    Hence by (\ref{eq:5-1}), $\sum_{\rho\in \tilde{\Gamma}}|{\cal B}_{\rho}|=|\Gamma|$.
Therefore, it suffices to prove that 
$\left\{ \sqrt{ \frac{d_\rho}{|\Gamma|} } \rho_B : \rho \in \tilde{\Gamma}, B \in B_\rho \right\}$ is an orthonormal set.
By Schur orthogonality (Proposition~\ref{prop:schur}) over $\mathbb{C}$, 
 for two inequivalent real irreducible representations $\rho, \rho' \in \tilde{\Gamma}$ and any $B \in B_\rho, B' \in B_{\rho'}$, 
$\rho_B$ and $\rho_{B'}$ are orthogonal to each other.
    Hence it suffices to prove that for each $\rho$, $\left\{\sqrt{\frac{d_\rho}{\Gamma}} \rho_B:B\in B_\rho \right\}$ forms an orthonormal set.

    When $\rho$ is of real type (i.e., $\rho$ is irreducible over $\mathbb{C}$, this follows from Schur orthogonality, again.

Suppose that $\rho$ is of complex type.
By Proposition~\ref{prop:type}, $\rho$ is (unitary) equivalent to $\pi\otimes \overline{\pi}$ for some complex irreducible representation $\pi$ of degree $\frac{d_{\rho}}{2}$, and by $\rho \in {\cal C}^{d_{\rho}}$ we may 
suppose  $\rho(\gamma)=\Re \pi(\gamma) \otimes c(1) + \Im \pi(\gamma) \otimes c(\mi)$
 for all $\gamma \in \Gamma$.
Then
\begin{equation}\label{eq:ortho1}
\rho_{E_{k,l}\otimes c(1)}=\Re \pi_{k,l}\text{ and }
\rho_{E_{k,l}\otimes c({\rm i})}=\Im \pi_{k,l},
\end{equation}
where $\pi_{k,l}$ denotes the vector in $\mathbb{C}^{\Gamma}$ such that 
$\pi_{k,l}(\gamma)$ is the $(k,l)$-entry of $\pi(\gamma)$.
  
For a complex number $c\in \mathbb{C}$, 
we have $\begin{pmatrix} c & \overline{c}\end{pmatrix}\begin{pmatrix} \frac{1}{2} & -\frac{\rm i}{2} \\ \frac{1}{2} & \frac{\rm i}{2}\end{pmatrix} =\begin{pmatrix} \Re c & \Im c\end{pmatrix}$.
Note that $\begin{pmatrix} \frac{1}{2} & -\frac{\rm i}{2} \\ \frac{1}{2} & \frac{\rm i}{2}\end{pmatrix}$ is unitary.
Applying this unitary transformation entry-wise to each pair $(\pi_{k,l},\overline{\pi_{k,l}})$,  
the set of vectors 
$\{\pi_{k,l}: 1\leq k,l\leq \frac{d_{\rho}}{2}\}\cup  
\{\overline{\pi_{k,l}}: 1\leq k,l\leq \frac{d_{\rho}}{2}\}$ 
is mapped to 
$\{\Re\pi_{k,l}: 1\leq k,l\leq \frac{d_{\rho}}{2}\}\cup \{\Im \pi_{k,l}: 1\leq k,l\leq \frac{d_{\rho}}{2}\}$.
Since this is a unitary transformation, the orthogonality of the former set (which follows from Schur orthogonality) implies that the orthogonality of the latter set. By (\ref{eq:ortho1}), this in turn implies the orthogonality of $\left\{\sqrt{\frac{d_\rho}{\Gamma}} \rho_B:B\in B_\rho \right\}$.


Finally, suppose that $\rho$ is of quanternionic type.
By Proposition~\ref{prop:type}, $\rho$ is
(complex-)equivalent to the direct sum of two copies of a self-conjugate irreducible representation $\pi$.
As $\rho\in {\cal H}^{d_{\rho}}$,
$\rho(\gamma)=a(\gamma)\otimes h(1) + b(\gamma) \otimes h(i)+c(\gamma) \otimes h(j) + d(\gamma) \otimes h(k)$ 
    for some $a,b,c,d:\Gamma \rightarrow M_{\frac{d_\rho}{4}}(\mathbb{R})$,
and we may take $\pi$ such that 
    \begin{align*}
    \pi(\gamma) &= a(\gamma) \otimes \begin{pmatrix} 1 & 0 \\ 0 & 1 \end{pmatrix} 
            + b(\gamma) \otimes \begin{pmatrix} i & 0 \\ 0 & -i \end{pmatrix}
            + c(\gamma) \otimes \begin{pmatrix} 0 & 1 \\ -1 & 0 \end{pmatrix}
            + d(\gamma) \otimes \begin{pmatrix} 0 & i \\ i & 0 \end{pmatrix}.
    \end{align*}
Hence, by using Schur orthogonality of $\pi$, one can check the statement by the same manner as the case of complex type.
    \end{proof}

    \begin{proof}[Proof of Lemma~\ref{lem:cal2}]
We apply the same calculation as that in the proof of Lemma~\ref{lem:1}.
Then, for each $\rho\in \tilde{\Gamma}$, we have
    \begin{equation} \label{eq:ap1}
        \Psi(P^\top P)_\rho = (I_{d_\rho}\otimes Y\tilde{P})^\top A_\rho (I_{d_\rho}\otimes Y\tilde{P}).
    \end{equation}
    with
    \begin{equation}\label{eq:ap2}
        A_\rho =\sum_{\gamma \in \Gamma} \rho(\gamma) \otimes 
 \bigoplus_{\rho' \in \tilde{\Gamma}} 
        \bigoplus_{1 \leq t \leq m_{\rho'}} \rho'(\gamma).
    \end{equation}
If  $\rho$ is of real type, the proof is identical to that of Lemma~\ref{lem:1}.

Suppose that $\rho$ is of complex type.
    By Proposition~\ref{prop:ortho}, for a real irreducible representation $\rho' \in \tilde{\Gamma}$, we have
    \[
        \sum_{\gamma \in \Gamma} \rho(\gamma) \otimes \rho'(\gamma) = 
        \begin{cases}
            \displaystyle\frac{|\Gamma|}{d_\rho} \sum_{1 \leq l,m \leq \frac{d_\rho}{2}} 
\left(E_{lm} \otimes c(1) \otimes E_{lm} \otimes c(1)+
E_{lm}\otimes c(\mi)\otimes E_{lm} \otimes c(\mi)\right)
            & (\text{if }\rho=\rho'), \\
            O & (\text{if }\rho \neq \rho').
        \end{cases}
    \]
To see how the sum in the above relation for $\rho=\rho'$ can be simplified,
let $\{\be_{(\ell,a)}: 1\leq \ell\leq \frac{d_{\rho}}{2}, 1\leq a\leq 2\}$ be the standard basis of $\mathbb{R}^{d_{\rho}\times d_{\rho}}$.
Then 
\begin{align*}
&\sum_{1 \leq l,m \leq \frac{d_\rho}{2}} 
\left(E_{lm} \otimes c(1) \otimes E_{lm} \otimes c(1)+
E_{lm}\otimes c(\mi)\otimes E_{lm} \otimes c(\mi)\right) \\
&= \sum_{1 \leq l,m \leq \frac{d_\rho}{2}} E_{lm} \otimes \begin{pmatrix}
                E_{lm} \otimes c(1) & -E_{lm} \otimes c(\mi) \\ 
                E_{lm} \otimes c(\mi) & E_{lm} \otimes c(1)
            \end{pmatrix} \\
&=\sum_{1 \leq l,m \leq \frac{d_\rho}{2}} \be_l\be_m^{\top}\otimes \begin{pmatrix}
                \be_l\be_m^{\top} \otimes c(1)c(1)^{\top} & \be_l\be_m^{\top} \otimes c(1)c(\mi)^{\top} \\ 
                \be_l\be_m^{\top} \otimes c(\mi)c(1)^{\top} & \be_l\be_m^{\top} \otimes c(\mi)c(\mi)^{\top}
            \end{pmatrix} \\
        &=
        \left( \sum_{1 \leq l \leq \frac{d_\rho}{2}} \bm{e}_l \otimes \begin{pmatrix} \bm{e}_l \otimes c(1) \\ \bm{e}_l \otimes c(\mi) \end{pmatrix} \right)
        \left( \sum_{1 \leq l \leq \frac{d_\rho}{2}} \bm{e}_l \otimes \begin{pmatrix} \bm{e}_l \otimes c(1) \\ \bm{e}_l \otimes c(\mi) \end{pmatrix} \right)^\top \\
&=\left( \sum_{1 \leq l \leq \frac{d_\rho}{2}} \bm{e}_l \otimes \begin{pmatrix} \bm{e}_{(l,1)} & \bm{e}_{(l,2)}\\ \bm{e}_{(l,2)} & -\bm{e}_{(l,1)} \end{pmatrix} \right)
        \left( \sum_{1 \leq l \leq \frac{d_\rho}{2}} \bm{e}_l \otimes \begin{pmatrix} \bm{e}_{(l,1)} & \bm{e}_{(l,2)}\\ \bm{e}_{(l,2)} & -\bm{e}_{(l,1)} \end{pmatrix} \right)^\top \\
&=\left( \sum_{1 \leq l \leq \frac{d_\rho}{2}} \bm{e}_l \otimes \begin{pmatrix} \bm{e}_{(l,1)} & -\bm{e}_{(l,2)}\\ \bm{e}_{(l,2)} & \bm{e}_{(l,1)} \end{pmatrix} \right)
        \left( \sum_{1 \leq l \leq \frac{d_\rho}{2}} \bm{e}_l \otimes \begin{pmatrix} \bm{e}_{(l,1)} & -\bm{e}_{(l,2)}\\ \bm{e}_{(l,2)} & \bm{e}_{(l,1)} \end{pmatrix} \right)^\top.
    \end{align*}
Combining it with (\ref{eq:ap2}),  we have
    \begin{equation} \label{eq:ap3}
        A_\rho = \frac{|\Gamma|}{d_\rho} \sum_{1 \leq t \leq m_\rho} U_{\rho,t}U_{\rho,t}^\top
    \end{equation}
    with
    \[
        U_{\rho,t}= \sum_{1 \leq l \leq \frac{d_\rho}{2}} \bm{e}_l \otimes
        \begin{pmatrix} \bm{e}_{(\rho,t,l,1)} & -\bm{e}_{(\rho,t,l,2)}\\ \bm{e}_{(\rho,t,l,2)} & \bm{e}_{(\rho,t,l,1)} \end{pmatrix}.
    \]
Hence by (\ref{eq:ap1}) and (\ref{eq:ap3}), we have
$\Psi(P^{\top}P)_{\rho}=\sum_{1\leq t\leq m_{\rho}}W_{\rho,t}W_{\rho,t}^{\top}$ with 
\begin{align*}
W_{\rho,t}&=\sqrt{\frac{|\Gamma|}{d_{\rho}}}\sum_{1 \leq l \leq \frac{d_\rho}{2}} \bm{e}_l \otimes
        \begin{pmatrix} \tilde{P}^{\top}Y^{\top}\bm{e}_{(\rho,t,l,1)} & -\tilde{P}^{\top}Y^{\top}\bm{e}_{(\rho,t,l,2)}\\ \tilde{P}^{\top}Y^{\top}\bm{e}_{(\rho,t,l,2)} & \tilde{P}^{\top}Y^{\top}\bm{e}_{(\rho,t,l,1)} \end{pmatrix} \\ 
&=\sqrt{\frac{|\Gamma|}{d_{\rho}}}\sum_{1 \leq l \leq \frac{d_\rho}{2}} 
C\left( 
\bm{e}_l \otimes \left(\tilde{P}^\top Y^\top( \bm{e}_{(\rho,t,l,1)}+{\rm i} \bm{e}_{(\rho,t,l,2)})\right)\right)
\end{align*}
as required.

Finally suppose that  $\rho$ is of quaternionic type.
    For a real irreducible representation $\rho' \in \tilde{\Gamma}$, by Proposition~\ref{prop:ortho}, 
    $\sum_{\gamma \in \Gamma} \rho(\gamma) \otimes \rho'(\gamma)$ is $O$ if $\rho$ and $\rho'$ are not equivalent,
    and if $\rho = \rho'$, this is equal to 
    \[
        \frac{|\Gamma|}{d_\rho} \sum_{1 \leq l,m \leq \frac{d_\rho}{4}} E_{lm} \otimes \begin{pmatrix}
             E_{lm} \otimes h(1) & -E_{lm} \otimes h(\mi) & E_{lm} \otimes h(\mj) & -E_{lm} \otimes h(\mk) \\ 
             E_{lm} \otimes h(\mi) & E_{lm} \otimes h(1) & E_{lm} \otimes h(\mk) & E_{lm} \otimes h(\mj) \\
            -E_{lm} \otimes h(\mj) & -E_{lm} \otimes h(\mk) & E_{lm} \otimes h(1) & E_{lm} \otimes h(\mi) \\
             E_{lm} \otimes h(\mk) & -E_{lm} \otimes h(\mj) &-E_{lm} \otimes h(\mi) & E_{lm} \otimes h(1)
        \end{pmatrix}.
    \]
    For $\rho=\rho'$, this value is written as $\frac{|\Gamma|}{d_\rho} B B^\top$ with
    \[
    B = \sum_{1 \leq l \leq \frac{d_\rho}{4}} \bm{e}_l \otimes \begin{pmatrix} \bm{e}_l\otimes h(1) \\ \bm{e}_l\otimes h(\mi) 
    \\ -\bm{e}_l\otimes h(\mj) \\ \bm{e}_l\otimes h(\mk) \end{pmatrix}.
    \]
Applying the same calculation as that of the complex case, one can derive the desired form.
%
\end{proof}

\end{document}